\tikzstyle{block} = [rectangle, draw,
\tikzstyle{block2} = [rectangle, draw,
\tikzstyle{block3} = [rectangle,
\tikzstyle{line} = [draw, -implies, double equal sign distance]
\newcommand{\tth}{$t$\textsuperscript{th} }
\newcommand{\kth}{$k$\textsuperscript{th} }
\newcommand{\Vhat}{\widehat{V}}
\newcommand{\Xbar}{\bar{X}}
\newcommand{\Sigmahat}{\widehat{\Sigma}}
\newcommand{\Xhat}{\widehat{X}}
\newcommand{\Yhat}{\widehat{Y}}
\newcommand{\Mtil}{\textup{DM}}
\newcommand{\flil}{f^{\text{LIL}}}
\newcommand{\lambdahat}{\hat{\lambda}}
\newcommand{\CI}{\textup{CI}}
\renewcommand{\ATE}{\textup{ATE}}
\newcommand{\GM}{\textup{GM}}
\newcommand{\NM}{\textup{NM}}
\newcommand{\GP}{\textup{GP}}
\newcommand{\GE}{\textup{GE}}
\newcommand{\uppergamma}{\bar{\gamma}}
\newcommand{\subpsiclass}{\mathbb{S}^{l_0}_{\psi}}
\newcommand{\overlambda}{\overline{\lambda}}
\newcommand{\Zbar}{\bar{Z}}
\begin{document}

\title{Time-uniform, nonparametric, nonasymptotic confidence sequences}
\author{Steven R. Howard$^1$ \and Aaditya Ramdas$^{2,3}$
  \and Jon McAuliffe$^{1,4}$ \and Jasjeet Sekhon$^{1,5}$ \and
  Departments of Statistics$^1$ and Political Science$^5$, UC Berkeley\\
  Departments of Statistics and Data Science$^2$ and Machine Learning$^3$,
  Carnegie Mellon\\
  The Voleon Group$^4$ \\
  \texttt{\{stevehoward,jonmcauliffe,sekhon\}@berkeley.edu},
  \texttt{aramdas@stat.cmu.edu}}
\maketitle

\begin{abstract}
  A confidence sequence is a sequence of confidence intervals that is uniformly
  valid over an unbounded time horizon. Our work develops confidence
  sequences whose widths go to zero, with nonasymptotic coverage guarantees
  under nonparametric conditions. We draw connections between the
 Cram\'er-Chernoff method for exponential concentration, the
  law of the iterated logarithm (LIL), and the sequential probability ratio
  test---our confidence sequences are time-uniform extensions of the first; 
  provide tight, nonasymptotic characterizations of the second; and
  generalize the third to nonparametric settings, including sub-Gaussian and
  Bernstein conditions, self-normalized processes, and matrix martingales. We
  illustrate the generality of our proof techniques by deriving an
  empirical-Bernstein bound growing at a LIL rate, as well as a novel upper LIL
  for the maximum eigenvalue of a sum of random matrices. Finally, we apply our
  methods to covariance matrix estimation and to estimation of sample average
  treatment effect under the Neyman-Rubin potential outcomes model.
\end{abstract}

\section{Introduction}\label{sec:introduction}

It has become standard practice for organizations with online presence to run
large-scale randomized experiments, or ``A/B tests'', to improve product
performance and user experience. Such experiments are inherently sequential:
visitors arrive in a stream and outcomes are typically observed quickly relative
to the duration of the test. Results are often monitored continuously using
inferential methods that assume a fixed sample, despite the known problem
that such monitoring inflates Type~I error substantially
\citep{armitage_repeated_1969, berman_p-hacking_2018}. Furthermore, most A/B
tests are run with little formal planning and fluid decision-making, compared
to clinical trials or industrial quality control, the traditional applications
of sequential analysis.

This paper presents methods for deriving \emph{confidence sequences} as a
flexible tool for inference in sequential experiments
\citep{darling_confidence_1967, lai_incorporating_1984,
  jennison_interim_1989}. For $\alpha \in (0,1)$, a $(1-\alpha)$-confidence
sequence is a sequence of confidence sets $(\CI_t)_{t=1}^\infty$, typically
intervals $\CI_t = (L_t, U_t) \subseteq \R$, satisfying a uniform coverage
guarantee: after observing the \tth unit, we calculate an updated confidence set
$\CI_t$ for the unknown quantity of interest $\theta_t$, with the uniform
coverage property
\begin{align}
\P(\forall t \geq 1 : \theta_t \in \CI_t) \geq 1 - \alpha.
\label{eq:conf_seq_defn}
\end{align}
With only a uniform lower bound $(L_t)$, i.e., if $U_t \equiv \infty$, we have a
\emph{lower confidence sequence}. Likewise, if $L_t \equiv -\infty$ we have an
\emph{upper confidence sequence} given by
$(U_t)$. \Cref{th:stitching,th:discrete_mixture,th:inverted_stitching} and
\cref{th:basic_mixture} are our key tools for constructing confidence
sequences. All build upon the general framework for uniform exponential
concentration introduced in \citet{howard_exponential_2018}, which means our
techniques apply in diverse settings: scalar, matrix, and Banach-space-valued
observations, with possibly unbounded support; self-normalized bounds applicable
to observations satisfying weak moment or symmetry conditions; and
continuous-time scalar martingales. Our methods allow for flexible control of
the ``shape'' of the confidence sequence, that is, how the sequence of intervals
shrinks in width over time. As a simple example, given a sequence of i.i.d.\
observations $(X_t)_{t=1}^\infty$ from a 1-sub-Gaussian distribution whose mean
$\mu$ we would like to estimate, \cref{th:stitching} yields the following
$(1-\alpha)$-confidence sequence for $\mu$, a special case of the more general
bound \eqref{eq:poly_stitching}:
\begin{align}\label{eq:stitching-intro}
{
\frac{\sum_{i=1}^t X_i}{t} 
  \pm 1.7 \sqrt{\frac{\log \log(2t) + 0.72 \log(10.4 / \alpha)}{t}}.}%
\end{align}
The $\Ocal(\sqrt{t^{-1} \log \log t})$ asymptotic rate of this bound matches the
lower bound implied by the law of the iterated logarithm (LIL), and
nonasymptotic bounds of this form are called \emph{finite LIL bounds}
\citep{jamieson_lil_2014}.

We develop confidence sequences that possess the following properties:

\begin{enumerate}
\item[(P1)] \textbf{Nonasymptotic and nonparametric}: our confidence sequences
  offer coverage guarantees for all sample sizes, without exact distributional
  assumptions or asymptotic approximations.
\item[(P2)] \textbf{Unbounded sample size}: our methods do not require a final
  sample size to be chosen ahead of time. They may be tuned for a planned sample
  size but always permit additional sampling.
\item[(P3)] \textbf{Arbitrary stopping rules}: we make no assumptions on the
  stopping rule used by an experimenter to decide when to end the experiment, or
  when to act on certain inferences.
\item[(P4)] \textbf{Asymptotically zero width}: the interval widths of our
  confidence sequences shrink towards zero at a $1/\sqrt{t}$ rate, ignoring log
  factors, just as with pointwise confidence intervals.
\end{enumerate}

\begin{figure}
\centering
\includegraphics{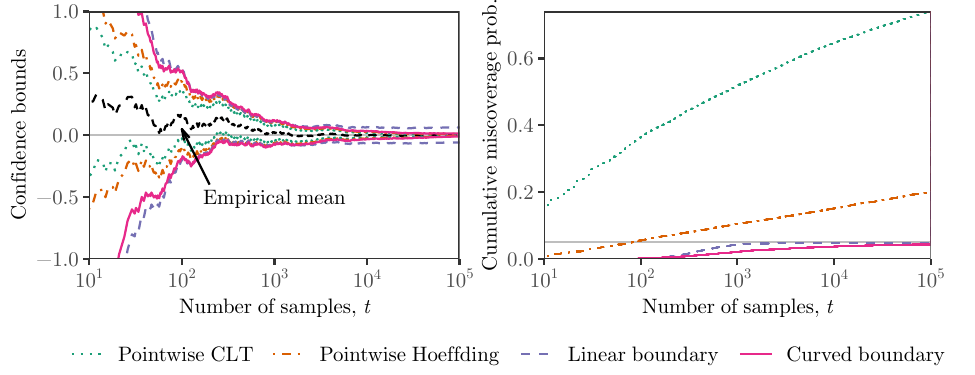}
\caption{
Left panel shows 95\% pointwise confidence intervals and uniform
  confidence sequences for the mean of a Rademacher random variable, using one
  simulation of 100,000 i.i.d.\ draws. Right panel shows cumulative chance of
  miscoverage based on 10,000 replications; flat grey line shows the nominal target level
  0.05. The CLT intervals are asymptotically pointwise valid (these are similar
  to the exact binomial confidence intervals, which are nonasymptotically
  pointwise valid). The pointwise Hoeffding intervals are nonasymptotically
  pointwise valid. The confidence sequence based on a linear boundary, as in
  \cref{th:uniform_chernoff}, is valid uniformly over time and
  nonasymptotically, but does not shrink to zero width. Finally, the confidence
  sequence based on a curved boundary is valid uniformly and nonasymptotically,
  while also shrinking towards zero width; here we use the two-sided normal
  mixture boundary, \eqref{eq:two_sided_normal_mixture}, qualitatively similar
  to the stitched bound \eqref{eq:stitching-intro}. \label{fig:intro}}
  
\end{figure}

These properties give us strong guarantees and broad applicability. An
experimenter may always choose to gather more samples, and may stop at any time
according to any rule---the resulting inferential guarantees hold under the
stated assumptions without any approximations. Of course, this flexibility comes
with a cost: our intervals are wider than those that rely on asymptotics or make
stronger assumptions, for example, a known stopping rule. Typical, fixed-sample
confidence intervals derived from the central limit theorem do not satisfy any
of (P1)-(P3), and accommodating any one property necessitates wider intervals;
we illustrate this in \cref{fig:intro}. It is perhaps surprising that
these four properties come at a numerical cost of less than doubling the fixed-sample,
asymptotic interval width---the discrete mixture bound illustrated in
\cref{fig:finite_lil_boundaries} stays within a factor of two of the
fixed-sample CLT bounds over five orders of magnitude in time.

\subsection{Related work}\label{sec:related_work}


The idea of a confidence sequence goes back at least to
\citet{darling_confidence_1967}. They are called \emph{repeated confidence
  intervals} by \citet{jennison_repeated_1984, jennison_interim_1989} (with a
focus on finite time horizons) and \emph{always-valid confidence intervals} 
by \citet{johari_always_2015}. They are sometimes labeled
\emph{anytime confidence intervals} in the machine learning literature
\citep{jamieson_bandit_2018}.

Prior work on sequential inference is often phrased in terms of a sequential
hypothesis test, defined as a stopping rule and an accept/reject decision
variable, or in terms of an always-valid $p$-value \citep{johari_always_2015}. In
\cref{sec:hypothesis_testing} we discuss the duality between confidence
sequences, sequential hypothesis tests, and always-valid $p$-values. We show in \cref{th:equiv_uniform_defns} that definition
\eqref{eq:conf_seq_defn} is equivalent to requiring
$\P(\theta_\tau \in \CI_\tau) \geq 1 - \alpha$ for all stopping times $\tau$, or
even for all random times $\tau$, not necessarily stopping times. Hence the
choice of definition \eqref{eq:conf_seq_defn} over related definitions in
the literature is one of convenience.

Recent interest in confidence sequences has come from the literature on best-arm
identification with fixed confidence for multi-armed bandit
problems. \Citet{garivier2013informational}, \citet{jamieson_lil_2014}, \citet{kaufmann_complexity_2014}, and
\citet{zhao_adaptive_2016} present methods satisfying properties (P1)-(P4) for
independent, sub-Gaussian observations.
Our results are sharper and more general, and our
Bernstein confidence sequence scales with the true variance
in nonparametric settings. Confidence sequences
are a key
ingredient in best-arm selection algorithms \citep{jamieson_best-arm_2014} and related methods for sequential testing with multiple
comparisons \citep{yang_framework_2017, malek_sequential_2017,
  jamieson_bandit_2018}. Our results improve and generalize such methods.

\Citet{maurer_empirical_2009} and \citet{audibert_explorationexploitation_2009}
prove empirical-Bernstein bounds for fixed times or finite time horizons. Our
empirical-Bernstein bound holds uniformly over infinite
time. \citet{balsubramani_sharp_2014} takes a different approach to deriving
confidence sequences satisfying properties (P1)-(P4) by lower bounding a mixture
martingale. This work was extended in \citet{balsubramani_sequential_2016} to an
empirical-Bernstein bound, the only infinite-horizon, empirical-Bernstein
confidence sequence we are aware of in prior work. Our result removes a
multiplicative pre-factor and yields sharper bounds. We emphasize that our proof
technique is quite different from all three of these existing
empirical-Bernstein bounds; see \cref{sec:proof_empirical_variance}.

The simplest confidence sequence satisfying properties (P1)-(P3) follows by
inverting a suitably formulated sequential probability ratio test (SPRT,
\citep{wald_sequential_1945}), such as in Section 3.6 of
\citet{howard_exponential_2018}. Wald worked in a parametric setting, though it
is known that the normal SPRT depends only on sub-Gaussianity (e.g.,
\citet{robbins_statistical_1970}). The resulting confidence sequence does not
shrink towards zero width as $t \to \infty$ (property P4), a problem which stems
from the choice of a single point alternative $\lambda$. Numerous extensions
have been developed to remedy this defect, and our work is most closely tied to
two approaches. First, in the method of mixtures, one replaces the likelihood
ratio with a mixture $\int \prod_i [f_\lambda(X_i) / f_0(X_i)] \d F(\lambda)$,
which is still a martingale \citep{ville_etude_1939, wald_sequential_1945,
  darling_further_1968, robbins_probability_1969, robbins_boundary_1970,
  robbins_statistical_1970, lai_confidence_1976,
  de_la_pena_pseudo-maximization_2007, balsubramani_sharp_2014,
  bercu_concentration_2015,kaufmann2018mixture}. Second, epoch-based analyses choose a sequence of
point alternatives $\lambda_1, \lambda_2, \dots$ approaching the null value,
with corresponding error probabilities $\alpha_1, \alpha_2, \dots$ approaching
zero so that a union bound yields the desired error control
\citep{darling_iterated_1967, robbins_iterated_1968, kaufmann_complexity_2014}.

The literature on self-normalized bounds makes extensive use of the method of
mixtures, sometimes called pseudo-maximization
\citep{de_la_pena_self-normalized_2004, de_la_pena_pseudo-maximization_2007,
  de_la_pena_theory_2009, de_la_pena_self-normalized_2009, garivier2013informational}; these works
introduced the idea of using a mixture to bound a quantity with a random
intrinsic time $V_t$. These results are mostly given for fixed samples or finite
time horizon, though \citet[Eq. 4.20]{de_la_pena_self-normalized_2004} includes
an infinite-horizon curve-crossing bound. \Citet{lai_confidence_1976} treats
confidence sequences for the parameter of an exponential family using mixture
techniques similar to those of \cref{sec:conjugate_mixtures}. Like most work
 on the method of mixtures, Lai's work focused on the parametric
setting (which we discuss in \cref{sec:one_param_exp}), while we focus on the
application of mixture bounds to nonparametric settings.

\citet{johari_peeking_2017} adopt the mixture approach for a commercial A/B
testing platform, where properties (P2) and (P3) are critical to provide an
``off-the-shelf'' solution for a variety of clients. Their application relies on
asymptotics which lack rigorous justification. In \cref{sec:ate} we give
nonasymptotic justification for a similar confidence sequence under a
finite-sample randomization inference model, and in \cref{sec:simulations} we
demonstrate how our methods control Type I error in situations where asymptotics
fail.

\subsection{Outline}

We organize our results using the sub-Gaussian, sub-gamma, sub-Bernoulli, sub-Poisson and
sub-exponential settings defined in \cref{sec:prelims}.
\begin{enumerate}
  
\item The \emph{stitching} method gives new closed-form sub-Gaussian or sub-gamma
  boundaries 
  (\cref{th:stitching}). Our sub-gamma treatment
  extends prior sub-Gaussian work to cover any martingale whose increments have
  finite moment-generating function in a neighborhood of zero; see
  \cref{th:universal}. Our proof is transparent and flexible, accommodating
  a variety of boundary shapes, including those growing at the rate
  $\Ocal(\sqrt{t \log \log t})$ with a focus on tight constants, 
  though we do not recommend this bound in practice
  unless closed-form simplicity is vital.
\item \emph{Conjugate mixtures} give one- and two-sided boundaries for the
  sub-Bernoulli, sub-Gaussian, sub-Poisson and sub-exponential cases
  (\cref{sec:conjugate_mixtures}) which avoid approximations made for
  analytical convenience. The sub-Gaussian boundaries are unimprovable without
  further assumptions (\cref{sec:admissibility}). These boundaries include a
  common tuning parameter which is critical in practice and we
    discuss why their $\Ocal(\sqrt{t \log t})$ growth rate may be preferable to
    the slower $\Ocal(\sqrt{t \log \log t})$ rate (\cref{sec:tuning}).
\item \emph{Discrete mixtures} facilitate numerical computation of
  boundaries with a great deal of flexibility, at the cost of slightly more
  involved computations (\cref{th:discrete_mixture}). Like conjugate mixture
  boundaries, these boundaries avoid unnecessary approximations and are
  unimprovable in the sub-Gaussian case.
\item Finally, for sub-Gaussian processes, the \emph{inverted stitching} method
  (\cref{th:inverted_stitching}) gives numerical upper bounds on the crossing
  probability of \emph{any} increasing, strictly concave boundary over a limited
  time range. We show that any such boundary yields a uniform upper tail
  inequality over a finite horizon, and compute its crossing probability.
\end{enumerate}

Building on this foundation, we present a a state-of-the-art empirical-Bernstein
bound (\cref{th:empirical_variance}) for any sequence of bounded observations
using a new self-normalization proof technique.  We illustrate our methods with
two novel applications: the nonasymptotic, sequential estimation of average
treatment effect in the Neyman-Rubin potential outcomes model (\cref{sec:ate}),
and the derivation of uniform matrix bounds and covariance matrix confidence
sequences (\cref{th:matrix_finite_lil,sec:matrix_application}). We give
simulation results in \cref{sec:simulations}.  \Cref{sec:hypothesis_testing}
discusses the relationship of our work to existing concepts of sequential
testing.  
Proofs of main results are in \cref{sec:main_proofs}, with others
deferred to \cref{sec:other_proofs}.

\section{Preliminaries: linear boundaries}\label{sec:prelims}

Given a sequence of real-valued observations $(X_t)_{t=1}^\infty$, suppose we
wish to estimate the average conditional expectation
$\mu_t \defineas t^{-1} \sum_{i=1}^t \E_{i-1} X_i$ at each time $t$ using the
sample mean $\Xbar_t \defineas t^{-1} \sum_{i=1}^t X_i$; here we assume an
underlying filtration $(\Fcal_t)_{t=1}^\infty$ to which $(X_t)$ is adapted, and
$\E_t$ denotes expectation conditional on $\Fcal_t$. Let
$S_t \defineas \sum_{i=1}^t (X_i - \E_{i-1} X_i)$, the zero-mean deviation of
our sample sum from its estimand at time $t$. Given $\alpha \in (0,1)$, suppose
we can construct a uniform upper tail bound
$u_\alpha: \R_{\geq 0} \to \R_{\geq 0}$ satisfying
\begin{align}
\P\big(\exists t \geq 1 : S_t \geq u_\alpha(V_t)\big) \leq \alpha
\label{eq:uniform_boundary}
\end{align}
for some adapted, real-valued \emph{intrinsic time} process
$(V_t)_{t=1}^\infty$, an appropriate time scale to measure the (squared)
deviations of $(S_t)$. This uniform upper bound on the centered sum $(S_t)$
yields a lower confidence sequence for $(\mu_t)$ with radius
$t^{-1} u_\alpha(V_t)$:
$\P\eparen{\forall t \geq 1 : \Xbar_t - t^{-1} u_\alpha(V_t) \leq \mu_t} \geq 1
- \alpha$.

Note that an assumption on the upper tail of $(S_t)$ yields a lower
confidence sequence for $(\mu_t)$; a corresponding assumption on the lower tail
of $(S_t)$ yields an upper confidence sequence for $(\mu_t)$. In this paper we
formally focus on upper tail bounds, from which lower tail bounds can be derived
by examining $(-S_t)$ in place of $(S_t)$. In general, the left and right tails
of $(S_t)$ may behave differently and require different sets of assumptions, so
that our upper and lower confidence sequences may have different
forms. Regardless, we can always combine upper and
lower confidence sequences using a union bound to obtain a two-sided confidence
sequence \eqref{eq:conf_seq_defn}.

When the $(X_t)$ are independent with common mean $\mu$, the resulting
confidence sequence estimates $\mu$, but the setup requires neither independence
nor a common mean. In general, the estimand $\mu_t$ may be changing at each time
$t$; \cref{sec:ate} gives an application to causal inference in which this
changing estimand is useful. In principle, $\mu_t$ may also be random, although
none of our applications involve random $\mu_t$.

To construct uniform boundaries $u_\alpha$ satisfying inequality
\eqref{eq:uniform_boundary}, we build upon the following general condition
\citep[Definition 1]{howard_exponential_2018}:

\begin{definition}[Sub-$\psi$ condition]
\label{th:canonical_assumption}
Let $\smash{(S_t)_{t = 0}^\infty, (V_t)_{t = 0}^\infty}$ be real-valued processes
adapted to an underlying filtration $(\Fcal_t)_{t = 0}^\infty$ with $\smash{S_0=V_0=0}$
and~$\smash{V_t\geq0}$ for all $t$. For a function
$\psi: [0, \lambda_{\max}) \to \R$ and a scalar $l_0 \in [1, \infty)$, we say
$(S_t)$ is \emph{$l_0$-sub-$\psi$ with variance process $(V_t)$} if, for each
$\lambda \in [0, \lambda_{\max})$, there exists a supermartingale
$(L_t(\lambda))_{t = 0}^\infty$ w.r.t. $(\Fcal_t)$ such that
$\E L_0(\lambda) \leq l_0$ and
  \begin{align}
    \expebrace{\lambda S_t - \psi(\lambda) V_t } \leq L_t(\lambda)
    \text{ a.s.\ for all } t.
  \end{align}
  For given $\psi$ and $l_0$, let $\subpsiclass$ be the class of pairs of
  $l_0$-sub-$\psi$ processes $(S_t, V_t)$:
  \begin{align}
    \subpsiclass \defineas \ebrace{
      (S_t, V_t):
      (S_t) \text{ is $l_0$-sub-$\psi$ with variance process } (V_t)}.
  \end{align}
\end{definition}
When stating that a process is sub-$\psi$, we typically omit $l_0$ from our
terminology for simplicity. In scalar
cases, we always have $l_0 = 1$, while in matrix cases $l_0 = d$, the dimension
of the (square) matrices.

Where does \cref{th:canonical_assumption} come from?  The jumping-off point is
the martingale method for concentration inequalities
(\citep{hoeffding_probability_1963, azuma_weighted_1967,
  mcdiarmid_concentration_1998}; \citep[section
2.2]{raginsky_concentration_2012}), itself based on the classical
Cram\'er-Chernoff method (\citep{cramer_sur_1938, chernoff_measure_1952};
\citep[section 2.2]{boucheron_concentration_2013}). The martingale method starts
off with an assumption of the form
$\E_{t-1}e^{\lambda(X_t - \E_{t-1} X_t)}\leq e^{\psi(\lambda)\sigma_t^2}$
for all $t \geq 1, \lambda \in \R$. Then, denoting
$S_t \defineas \sum_{i=1}^t (X_i - \E_{i-1} X_i)$ and
$V_t \defineas \sum_{i=1}^t \sigma_i^2$, the process
$\expebrace{\lambda S_t - \psi(\lambda) V_t}$ is a supermartingale for each
$\lambda \in \R$. Unlike the martingale method assumption,
\cref{th:canonical_assumption} allows the exponential process to be upper
bounded by a supermartingale, and it permits $(V_t)$ to be adapted rather than
predictable. We also restrict our attention to $\lambda \geq 0$ to derive
one-sided bounds.

Intuitively, the process $\expebrace{\lambda S_t - \psi(\lambda) V_t}$ measures
how quickly $S_t$ has grown relative to intrinsic time $V_t$, and the free
parameter $\lambda$ determines the relative emphasis placed on the tails of the
distribution of $S_t$, i.e., on the higher moments. Larger values of $\lambda$
exaggerate larger movements in $S_t$, and $\psi$ captures how much we must
correspondingly exaggerate $V_t$.  $\psi$ is related to the heavy-tailedness of
$S_t$ and the reader may think of it as a cumulant-generating function (CGF, the
logarithm of the moment-generating function). For example, suppose $(X_t)$ is a
sequence of i.i.d., zero-mean random variables with CGF
$\psi(\lambda) \defineas \log \E e^{\lambda X_1}$ which is finite for all
$\lambda \in [0, \lambda_{\max})$. Then, setting $V_t \defineas t$, we see that
$L_t(\lambda) \defineas \expebrace{\lambda S_t - \psi(\lambda) V_t}$ is itself a
martingale, for all $\lambda \in [0, \lambda_{\max})$. Indeed, in all scalar
cases we consider, $L_t(\lambda)$ is just equal to
$\expebrace{\lambda S_t - \psi(\lambda) V_t}$. See
Appendix~\Cref{tab:scalar_suff_cond,tab:matrix_suff_cond}, drawn from
\citet{howard_exponential_2018}, for a catalog of sufficient conditions for a
process to be sub-$\psi$ using the five $\psi$ functions defined below. We use
many of these conditions in what follows.

We organize our uniform boundaries according to the $\psi$ function used in
\cref{th:canonical_assumption}. First recall
the Cram\'er-Chernoff bound: if $(X_t)$ are independent zero-mean with bounded
CGF $\log \E e^{\lambda X_t} \leq \psi(\lambda)$ for all $t \geq 1$ and
$\lambda \in \R$, then writing $S_t = \sum_{i=1}^t X_i$, we have
$\P(S_t \geq x) \leq e^{-t \psi^\star(x/t)}$ for any $x > 0$, where $\psi^\star$
denotes the Legendre-Fenchel transform of $\psi$. Equivalently, writing
$z_\alpha(t) \defineas t {\psi^\star}^{-1}(t^{-1} \log \alpha^{-1})$, we have
$\P(S_t \geq z_\alpha(t)) \leq \alpha$ for any fixed $t$ and $\alpha \in
(0,1)$. In other words, the function $z_\alpha$ gives a high-probability upper
bound at any fixed time $t$ for \emph{any} sum of independent random variables
with CGF bounded by $\psi$. When we extend this concept to boundaries holding
uniformly over time, there is no longer a unique, minimized boundary, and the
following definition captures the class of valid boundaries.

\begin{definition}\label{th:uniform_boundary}
  Given $\smash{\psi: [0, \lambda_{\max}) \to \R}$ and
  $\smash{l_0\geq 1}$, a function~$\smash{u:\R\to\R}$ is called an
  \emph{$l_0$-sub-$\psi$ uniform boundary} with crossing probability
  $\alpha$ if
  \begin{align}
    \sup_{(S_t,V_t) \in \subpsiclass} \P(\exists t \geq 1: S_t \geq u(V_t))
      \leq \alpha.
  \end{align}
  
\end{definition}

Although $u$ does depend on the constant $l_0$ in
\cref{th:canonical_assumption}, for simplicity we typically omit this dependence
from our notation, writing simply that $u$ is a sub-$\psi$ uniform boundary.

Five particular $\psi$ functions play important roles in our development; below,
we take $1 / 0 = \infty$ in the upper bounds on $\lambda$:
\begin{itemize}
\item
  $\psi_{B,g,h}(\lambda) \defineas \frac{1}{gh} \log\pfrac{ge^{h\lambda} +
    he^{-g\lambda}}{g+h}$ on $0 \leq \lambda < \infty$, the scaled CGF of a
  centered random variable (r.v.) supported on two points, $-g$ and $h$, for some
  $g,h > 0$, for example a centered Bernoulli r.v. when
  $g + h = 1$.
\item $\psi_N(\lambda) \defineas \lambda^2/2$ on $0 \leq \lambda < \infty$, the
  CGF of a standard Gaussian r.v.
\item $\psi_{P,c}(\lambda) \defineas c^{-2}(e^{c\lambda} - c\lambda - 1)$ on
  $0 \leq \lambda < \infty$ for some scale parameter $c \in \R$, which is the
  CGF of a centered unit-rate Poisson r.v. when $c=1$. By taking the limit, we
  define $\psi_{P,0} = \psi_N$.
\item $\psi_{E,c}(\lambda) \defineas c^{-2}(-\log(1-c\lambda) - c\lambda)$ on
  $0 \leq \lambda < 1 / (c \bmax 0)$ for some scale $c \in \R$, which is the CGF
  of a centered unit-rate exponential r.v. when $c = 1$. By taking the limit, we
  define $\psi_{E,0} = \psi_N$.
\item $\psi_{G,c}(\lambda) \defineas \lambda^2 / (2 (1 - c\lambda))$ on
  $0 \leq \lambda < 1 / (c \bmax 0)$ (taking $1/0 = \infty$) for some scale
  parameter $c \in \R$, which we refer to as the sub-gamma case following
  \citet{boucheron_concentration_2013}. This is not the CGF of a gamma r.v. but
  is a convenient upper bound which also includes the sub-Gaussian case at
  $c = 0$ and permits analytically tractable results below.
\end{itemize}

\begin{figure}[h!]
  \centering
  \begin{tikzpicture}[node distance = 4.5cm, auto]
    \node[block2] (ber) {Sub-Bernoulli};
    \node[block2, below of=ber, node distance=1.4cm] (norm) {Sub-Gaussian};
    \node[block2, right of=ber] (poi) {Sub-Poisson};
    \node[block2, right of=poi] (gam) {Sub-gamma};
    \node[block2, below of=gam, node distance=1.4cm] (exp)
      {Sub-exponential};
    \path [line] (norm) -- (ber);
    \path [line] (poi.170) -- (ber.10);
    \path [line] (ber.350) -- node[below] {$c < 0$} (poi.190);
    \path [line] (poi.220) to[out=270, in=0] (norm.5);
    \path [line] (norm.350) to[out=0, in=270] node[below right] {$c < 0$}
      (poi.270);
    \path [line] (gam.170) -- (poi.10);
    \path [line] (poi.350) -- node[below] {$c < 0$} (gam.190);
    \path [line] (exp.60) -- (gam.300);
    \path [line] (gam.240) -- (exp.120);
  \end{tikzpicture}
  \caption{
    Relations among sub-$\psi$ boundaries: each arrow indicates that a
    sub-$\psi$ boundary at the source node can also serve as a sub-$\psi$
    boundary at the destination node, with appropriate modifications to their
    parameters. Details are in
    \cref{th:psi_boundary_relations}. \label{fig:psi_boundary_relations}
    }
\end{figure}
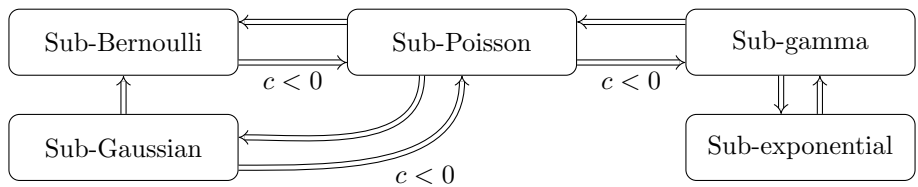

One may freely scale $\psi$ by any positive constant and divide $V_t$ by the
same constant so that \cref{th:canonical_assumption} remains satisfied; by
convention, we scale all $\psi$ functions above so that $\psi''(0_+) = 1$. When
we speak of a \emph{sub-gamma} process (or uniform boundary) with scale parameter
$c$, we mean a sub-$\psi_{G,c}$ process (or uniform boundary), and likewise for
 other cases. We often write $\psi_B$, $\psi_P$, etc., dropping the range and
scale parameters from our notation. As we summarize in
\cref{fig:psi_boundary_relations} and detail in
\cref{th:psi_boundary_relations}, certain general implications hold among
sub-$\psi$ boundaries. In particular, any sub-Gaussian boundary can also serve
as a sub-Bernoulli boundary; any sub-Poisson boundary serves as a sub-Gaussian
or sub-Bernoulli boundary; and, importantly, any sub-gamma or sub-exponential
boundary can serve as a sub-$\psi$ boundary in any of the other four cases.
Indeed, a sub-gamma or sub-exponential boundary applies to many cases of
practical interest, as detailed below.  
\begin{proposition}\label{th:universal}
  Suppose $\psi$ is twice-differentiable and $\smash{\psi(0) = \psi'(0_+) =
  0}$. Suppose, for each $c>0$, $u_c(v)$ is a sub-gamma or sub-exponential
  uniform boundary with crossing probability $\alpha$ for scale $c$. Then
  $v \mapsto u_{k_1}(k_2v)$ is a sub-$\psi$ uniform boundary for some constants
  $k_1,k_2 > 0$ depending only on $\psi$.
\end{proposition}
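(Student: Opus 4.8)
The plan is to reduce the proposition to a single pointwise comparison between $\psi$ and the sub-gamma function $\psi_{G,c}$; once that is in hand, the conclusion follows immediately from \cref{th:canonical_assumption,th:uniform_boundary}. Concretely, I claim it suffices to produce constants $k_1, k_2 > 0$, depending only on $\psi$, such that (i) $1/k_1 \le \lambda_{\max}$ and (ii) $\psi(\lambda) \le k_2\,\psi_{G,k_1}(\lambda)$ for all $\lambda \in [0, 1/k_1)$. Given such constants, the argument runs as follows: take an arbitrary $(S_t, V_t) \in \subpsiclass$ and fix $\lambda \in [0, 1/k_1)$; by (i) this $\lambda$ lies in $[0,\lambda_{\max})$, so there is a supermartingale $L_t(\lambda)$ with $\E L_0(\lambda) \le l_0$ and $\expebrace{\lambda S_t - \psi(\lambda) V_t} \le L_t(\lambda)$, and by (ii) together with $V_t \ge 0$,
\[
  \expebrace{\lambda S_t - \psi_{G,k_1}(\lambda)\,(k_2 V_t)}
    \le \expebrace{\lambda S_t - \psi(\lambda) V_t} \le L_t(\lambda).
\]
So the same $L_t(\lambda)$ witnesses that $(S_t, k_2 V_t)$ is $l_0$-sub-$\psi_{G,k_1}$ (the structural requirements $S_0 = k_2 V_0 = 0$ and $k_2 V_t \ge 0$ obviously persist). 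Applying the assumed sub-gamma uniform boundary $u_{k_1}$, which by hypothesis controls the crossing probability of every $l_0$-sub-$\psi_{G,k_1}$ pair, to $(S_t, k_2 V_t)$ yields $\P(\exists t \ge 1 : S_t \ge u_{k_1}(k_2 V_t)) \le \alpha$; since $(S_t, V_t)$ was arbitrary in $\subpsiclass$, this is precisely the statement that $v \mapsto u_{k_1}(k_2 v)$ is a sub-$\psi$ uniform boundary.

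The remaining work is to construct $k_1$ and $k_2$, and here the key observation is that $\psi_{G,c}$ behaves like $\tfrac{1}{2}\lambda^2$ near the origin and blows up at its right endpoint $1/c$. I would take $k_1 = 1$ when $\lambda_{\max} = \infty$ and $k_1 = 2/\lambda_{\max}$ otherwise, so that $1/k_1 < \lambda_{\max}$ in every case; then $\psi$, being twice differentiable on $[0,\lambda_{\max})$, is continuous and hence bounded on the compact interval $[0, 1/k_1]$. Next consider $h(\lambda) \defineas \psi(\lambda)/\psi_{G,k_1}(\lambda)$ on $(0, 1/k_1)$: as $\lambda \downarrow 0$, Taylor's theorem at the origin — using $\psi(0) = \psi'(0_+) = 0$ and twice-differentiability — gives $\psi(\lambda) = \tfrac{1}{2}\psi''(0_+)\lambda^2 + o(\lambda^2)$ while $\psi_{G,k_1}(\lambda) = \tfrac{1}{2}\lambda^2 + o(\lambda^2)$, so $h(\lambda) \to \psi''(0_+)$; as $\lambda \uparrow 1/k_1$, the denominator $\psi_{G,k_1}(\lambda) = \lambda^2/(2(1-k_1\lambda))$ diverges to $+\infty$ with the numerator bounded, so $h(\lambda) \to 0$. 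Thus $h$ extends to a continuous function on $[0,1/k_1]$ and is bounded above by some finite $M$; setting $k_2 \defineas \max\{1, M\}$ and using $\psi_{G,k_1} \ge 0$ on $[0,1/k_1)$ gives $\psi(\lambda) = h(\lambda)\psi_{G,k_1}(\lambda) \le M\,\psi_{G,k_1}(\lambda) \le k_2\,\psi_{G,k_1}(\lambda)$, which is (ii). Both constants depend only on $\psi$ (through $\lambda_{\max}$ and $\psi$ itself). The identical argument goes through with $\psi_{E,k_1}$ replacing $\psi_{G,k_1}$ when only a sub-exponential boundary is available, since $\psi_{E,c}$ is also $\tfrac{1}{2}\lambda^2 + o(\lambda^2)$ near $0$ and diverges as $\lambda \uparrow 1/c$.

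The step I expect to be the main obstacle is the bookkeeping at the right endpoint of the domain: comparison (ii) can only hold where $\psi$ is finite, so $1/k_1$ must be kept strictly inside $[0,\lambda_{\max})$ — at which point the divergence of $\psi_{G,k_1}$ at $1/k_1$ makes (ii) automatic near there and no large-$\lambda$ growth hypothesis on $\psi$ is required. A secondary delicate point is the behavior at $\lambda = 0$: the hypotheses $\psi'(0_+) = 0$ and the existence of $\psi''(0_+)$ are exactly what keep $h$ bounded as $\lambda \downarrow 0$ (without $\psi'(0_+) = 0$ the ratio would grow like $1/\lambda$), and one should use Taylor's theorem in its qualitative Peano form here, since twice-differentiability does not guarantee that $\psi''$ is continuous.
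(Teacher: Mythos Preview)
Your proposal is correct and follows essentially the same approach as the paper: the paper's ``proof'' is simply a reference to \citet[Proposition 1]{howard_exponential_2018}, which asserts that any sub-$\psi$ process is sub-gamma and sub-exponential with appropriate scale and variance rescaling, and you have supplied exactly the argument behind that proposition --- the pointwise comparison $\psi(\lambda) \le k_2\,\psi_{G,k_1}(\lambda)$ on $[0,1/k_1)$, obtained by bounding the ratio via Taylor expansion at the origin and divergence of $\psi_{G,k_1}$ at the right endpoint.
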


\Cref{th:universal} restates
\citet[Proposition 1]{howard_exponential_2018}, which shows that any process $(S_t)$ which is
sub-$\psi$ is also sub-gamma and sub-exponential, if $\psi$ satisfies
the conditions of \cref{th:universal}. Note that these conditions are satisfied 
for any mean-zero random variable if the CGF exists in a neighborhood of zero, 
so the conditions are quite weak
\citep[Theorem 2.3]{jorgensen_theory_1997}.

\begin{example}[Confidence sequence for the variance of a Gaussian distribution
  with unknown mean] \label{ex:variance} Suppose $X_1, X_2, \dots$ are i.i.d.\
  draws from a $\Normal(\mu, \sigma^2)$ distribution and we wish to sequentially
  estimate $\sigma^2$ when $\mu$ is also unknown. Let
  $S_t \defineas \sigma^{-2} \sum_{i=1}^{t+1} (X_i - \Xbar_{t+1})^2 - t$ for
  $t = 1, 2, \dots$, where $\Xbar_t \defineas t^{-1} \sum_{i=1}^t X_i$ is the
  sample mean. This $S_t$ is a centered and scaled sample variance, and as in
  \citet{darling_confidence_1967}, we use the fact that $S_t$ is a cumulative
  sum of independent, centered Chi-squared random variables each with one degree
  of freedom (see \cref{sec:example_details} for details). Such a centered
  Chi-squared distribution has variance two and CGF equal to $2
  \psi_{E,2}$. 

  Thus $(S_t)$ is 1-sub-exponential with variance process
  $V_t = 2t$ and scale parameter $c = 2$. We may uniformly bound the upper
  deviations of $S_t$ using any sub-exponential uniform boundary, for example
  the gamma-exponential mixture boundary of \cref{th:gamma_mixture}. Or, we can
  use \cref{th:psi_boundary_relations} to deduce that $(S_t)$ is sub-gamma with
  scale $c = 2$ (and the same variance process) and use the closed-form stitched
  boundary of \cref{th:stitching}.

  The above constructions yield lower confidence sequences for the
  variance. To obtain an upper confidence sequence, we use the fact that
  $(-S_t)$ is 1-sub-exponential with scale parameter $\smash{c = -2}$. Now
  \cref{th:psi_boundary_relations} implies that $(-S_t)$ is sub-gamma with scale
  $c = -1$, so the stitched boundary again applies, while
  \cref{th:psi_boundary_relations} implies that $(-S_t)$ is also sub-Gaussian,
  so we may alternatively use the normal mixture boundary of
  \cref{th:normal_mixture}. Since $\psi_{G,-1}$ is uniformly smaller
  than $\psi_N$, the above analysis yields tighter bounds than the
  sub-Gaussian approach of \citet{darling_confidence_1967}.
\end{example}

The simplest uniform boundaries are linear with positive intercept and slope.
This is formalized in \citet{howard_exponential_2018}, partially
restated below.

\begin{lemma}[\citep{howard_exponential_2018}, Theorem 1]
\label{th:uniform_chernoff}
For any $\lambda \in [0, \lambda_{\max})$ and $\alpha \in (0,1)$,
\begin{align}
  u(v) \defineas \frac{\log(l_0 / \alpha)}{\lambda}
    + \frac{\psi(\lambda)}{\lambda} \cdot v
\end{align}
is a sub-$\psi$ uniform boundary with crossing probability $\alpha$.
\end{lemma}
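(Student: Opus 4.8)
The plan is to derive the bound as an essentially immediate consequence of the sub-$\psi$ definition (\cref{th:canonical_assumption}) together with Ville's maximal inequality for nonnegative supermartingales. Fix $\lambda \in (0, \lambda_{\max})$ and $\alpha \in (0,1)$; the degenerate case $\lambda = 0$ is trivial, since then $u \equiv \infty$ under the convention $1/0 = \infty$ and the crossing event $\{\exists t \geq 1 : S_t \geq u(V_t)\}$ is empty. Let $(S_t, V_t) \in \subpsiclass$ be arbitrary. By \cref{th:canonical_assumption} there is a supermartingale $(L_t(\lambda))_{t=0}^\infty$ with $\E L_0(\lambda) \leq l_0$ such that $M_t \defineas \expebrace{\lambda S_t - \psi(\lambda) V_t} \leq L_t(\lambda)$ a.s.\ for all $t$. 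Since $M_t > 0$, the process $(L_t(\lambda))$ is a nonnegative supermartingale.

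The key step is a one-line algebraic rearrangement: $S_t \geq u(V_t)$ is equivalent to $\lambda S_t - \psi(\lambda) V_t \geq \log(l_0/\alpha)$, i.e.\ to $M_t \geq l_0 / \alpha$, and since $L_t(\lambda) \geq M_t$ this event is contained in $\{L_t(\lambda) \geq l_0/\alpha\}$. Hence
\[
  \P\eparen{\exists t \geq 1 : S_t \geq u(V_t)}
    \leq \P\eparen{\sup_{t \geq 0} L_t(\lambda) \geq l_0/\alpha}
    \leq \frac{\E L_0(\lambda)}{l_0/\alpha}
    \leq \alpha,
\]
where the middle inequality is Ville's maximal inequality applied to the nonnegative supermartingale $(L_t(\lambda))$, and the last uses $\E L_0(\lambda) \leq l_0$. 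Since $(S_t, V_t)$ was an arbitrary element of $\subpsiclass$, taking the supremum over the class preserves the bound, which is exactly the defining property of an $l_0$-sub-$\psi$ uniform boundary with crossing probability $\alpha$ (\cref{th:uniform_boundary}).

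There is no real obstacle here; the content is entirely in correctly invoking the supermartingale structure guaranteed by \cref{th:canonical_assumption}. The only points worth stating carefully are (i) that $(L_t(\lambda))$ is genuinely nonnegative, which is what licenses Ville's inequality, and which holds because it dominates the strictly positive exponential process; (ii) the trivial treatment of $\lambda = 0$; and (iii) that passing from ``$\exists t \geq 1$'' to ``$\sup_{t \geq 0}$'' only enlarges the event. If one prefers not to cite Ville's inequality directly, the same conclusion follows by applying Doob's maximal inequality (or optional stopping) to the stopped supermartingale $L_{t \wedge \tau}(\lambda)$, where $\tau \defineas \inf\{t \geq 1 : S_t \geq u(V_t)\}$.
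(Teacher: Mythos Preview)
Your proof is correct and matches the intended argument: the paper does not reprove \cref{th:uniform_chernoff} but simply restates it from \citet[Theorem~1]{howard_exponential_2018}, and the underlying technique there is exactly the one you give---rearranging $S_t \geq u(V_t)$ into $\expebrace{\lambda S_t - \psi(\lambda) V_t} \geq l_0/\alpha$ and applying Ville's maximal inequality to the dominating nonnegative supermartingale $(L_t(\lambda))$. This is also consistent with how the present paper proves the closely related \cref{th:basic_mixture} in \cref{sec:proof_mixtures}.
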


While \cref{th:uniform_chernoff} provides a versatile building block, the $\Ocal(V_t)$ growth of $u(V_t)$ may be undesirable. Indeed, from a concentration point of
view, the typical deviations of $S_t$ tend to be only $\Ocal(\sqrt{V_t})$, 
so the bound will rapidly become loose for large $t$. From a confidence sequence point of view,
 recall that the confidence radius for the mean is given by
  $u(V_t) / t$. Typically, $V_t = \Theta(t)$ a.s. as $t \to \infty$, so the
  confidence radius will be asymptotically zero width if and only if
  $u(v) = o(v)$. In other words, we cannot achieve arbitrary estimation
  precision with arbitrarily large samples unless the uniform boundary is
  sublinear. We address this problem in \cref{sec:main_results}, building upon
\cref{th:uniform_chernoff} to construct \emph{curved} sub-$\psi$ uniform
boundaries.

\section{Curved uniform boundaries}\label{sec:main_results}

We present our four methods for computing curved uniform boundaries in
\cref{sec:stitching,sec:conjugate_mixtures,sec:discrete_mixture,sec:inverted_stitching}. In
\cref{sec:tuning}, we discuss how to tune boundaries, a necessity for good
performance in practice, and we describe the unimprovability of sub-Gaussian
mixture bounds in \cref{sec:admissibility}.

\subsection{Closed-form boundaries via stitching}\label{sec:stitching}
Our analytical ``stitched'' bound is useful in the sub-Gaussian case or, more
generally, the sub-gamma case with scale $c$. We require three user-chosen
parameters:
\begin{itemize} 
\item a scalar $\eta > 1$ determines the geometric spacing of intrinsic time,
\item a scalar $m > 0$ which gives the intrinsic time at which the uniform
  boundary starts to be nontrivial, and
\item an increasing function $h : \R_{\geq 0} \to \R_{>0}$  such that
  ${\sum_{k=0}^\infty 1 / h(k) \leq 1}$, which determines the shape of the
  boundary's growth after time $m$.
\end{itemize}
Recalling the scale parameter $c$ for the $\psi_G$ function above and the
constant $l_0$ in \cref{th:canonical_assumption}, we define the stitching
function $\Scal_\alpha$ as
\begin{equation}
\small
\Scal_\alpha(v) \defineas
  \sqrt{k_1^2 v \ell(v) + k_2^2 c^2 \ell^2(v)} \\
  + k_2 c \ell(v),
\text{ where }
\begin{cases}
\ell(v) \defineas \log h(\log_{\eta}(\frac{v}{m})) + \log(\frac{l_0}{\alpha}),\\
k_1 \defineas (\eta^{1/4} + \eta^{-1/4}) / \sqrt{2},\\
k_2 \defineas (\sqrt{\eta} + 1) / 2,
\end{cases}
\label{eq:stitching_operator}
\end{equation}
and define the stitched boundary as $u(v) = \Scal_\alpha(v \bmax m)$. Note
$\Scal_\alpha(v) \leq k_1 \sqrt{v \ell(v)} + 2 c k_2 \ell(v)$ when $c > 0$,
while $\Scal_\alpha(v) \leq k_1 \sqrt{v \ell(v)}$ when $c \leq 0$, with equality
in the sub-Gaussian case ($c = 0$). These simpler expressions may sometimes be
preferable. For notational simplicity we suppress the dependence of
$\Scal_\alpha$ on $h$, $\eta$, $l_0$, and $c$; we will discuss specific choices
as necessary. In the examples we consider, $\ell(v)$ grows as $\Ocal(\log v)$ or
$\Ocal(\log \log v)$ as $v \uparrow \infty$, so the first term,
$k_1 \sqrt{V_t \ell(V_t)}$, dominates for sufficiently large $V_t$, specifically
when $V_t / \ell(V_t) \gg 2c^2\sqrt{\eta}$.

\begin{theorem}[Stitched boundary]\label{th:stitching}
  For any $\smash{c \geq 0, \alpha \in (0,1), \eta > 1, m>0}$, and
  $\smash{h : \R_{\geq 0} \to \R_{\geq 0}}$ increasing such that
  $\smash{\sum_{k=0}^\infty 1 / h(k) \leq 1}$, the function
  $v \mapsto \Scal_\alpha(v \bmax m)$ is a sub-gamma uniform boundary with
  crossing probability $\alpha$.  Further, for any sub-$\psi_G$ process
  $(S_t)$ with variance process $(V_t)$ and any $v_0 \geq m$,
  \begin{align}
    \P\eparen{
      \exists t \geq 1: V_t \geq v_0 \text{ and } S_t \geq \Scal_\alpha(V_t)}
    \leq \sum_{k=\floor{\log_\eta(v_0/m)}}^\infty \frac{\alpha}{h(k)}.
    \label{eq:late_crossing_decay}
  \end{align}
\end{theorem}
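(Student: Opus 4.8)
The plan is the classical ``stitching'' argument: tile intrinsic time into geometrically spaced epochs, cover each epoch by a single linear sub-$\psi_{G,c}$ boundary from \cref{th:uniform_chernoff} carrying its own slice of the error budget, and combine by a union bound. Write $\alpha_k \defineas \alpha/h(k)$, $v_k \defineas m\eta^k$, and $\ell_k \defineas \ell(v_k) = \log(l_0/\alpha_k)$ for $k \ge 0$; the epochs $I_k \defineas [v_k, v_{k+1})$ tile $[m,\infty)$, and $\sum_{k\ge0}\alpha_k = \alpha\sum_{k\ge0}1/h(k) \le \alpha$ by the hypothesis on $h$ (which also forces $h(0)\ge1$, hence $\ell_k > 0$). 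On $I_k$ I would invoke \cref{th:uniform_chernoff} with $\psi = \psi_{G,c}$, crossing probability $\alpha_k$, and slope parameter $\lambda_k$ chosen so that the resulting line is tangent, at the geometric midpoint $v_k\sqrt\eta$ of the epoch, to the exact sub-gamma curve $v \mapsto \sqrt{2v\ell_k}+c\ell_k$ (the lower envelope of all the linear boundaries); explicitly $\lambda_k = \rho_k/(1+c\rho_k)$ with $\rho_k \defineas \sqrt{2\ell_k/(v_k\sqrt\eta)}$, which lies in $[0,\lambda_{\max})$ for every $c\ge0$, and for which a short computation yields the clean form $u_k(v) = \ell_k/\rho_k + c\ell_k + (\rho_k/2)\,v$.

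The crux is the per-epoch comparison $u_k(v) \le \Scal_\alpha(v)$ for all $v \in I_k$, which I would reduce to checking its two endpoints. Since $\ell$ is increasing and the map $\ell \mapsto \sqrt{k_1^2 v\ell + k_2^2 c^2\ell^2} + k_2 c\ell$ is increasing, one has $\Scal_\alpha(v) \ge g_k(v) \defineas \sqrt{k_1^2 v\ell_k + k_2^2 c^2\ell_k^2} + k_2 c\ell_k$ for $v \ge v_k$; and $g_k$, being the square root of an affine, nondecreasing function of $v$ plus a constant, is concave, while $u_k$ is affine. So it suffices to verify $u_k \le g_k$ at $v_k$ and at $v_{k+1}$. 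Substituting $\rho_k$ gives $u_k(v_k) = \tfrac{1}{\sqrt2}(\eta^{1/4}+\eta^{-1/4})\sqrt{v_k\ell_k} + c\ell_k = k_1\sqrt{v_k\ell_k}+c\ell_k$, and likewise $u_k(v_{k+1}) = k_1\sqrt{v_{k+1}\ell_k}+c\ell_k$ --- this is exactly what pins down $k_1$. Each endpoint inequality then reads $k_1\sqrt{v\ell_k}+c\ell_k \le \sqrt{k_1^2 v\ell_k + k_2^2 c^2\ell_k^2} + k_2 c\ell_k$ (with $v = v_k$ or $v_{k+1}$), which holds because $\sqrt{k_1^2 v\ell_k + k_2^2 c^2\ell_k^2} \ge k_1\sqrt{v\ell_k}$ and $k_2 \ge 1$; indeed $k_2 = (\sqrt\eta+1)/2 \ge 1$ for $\eta > 1$. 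The pure sub-Gaussian case $c=0$ recovers the familiar stitching computation; the $c$-terms in $\Scal_\alpha$ and the factor $1/(1+c\rho_k)$ in $\lambda_k$ are precisely what absorbs the sub-gamma correction while keeping $\lambda_k < \lambda_{\max}$.

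Granting the per-epoch comparison, both conclusions follow by a union bound. Fix any $l_0$-sub-$\psi_{G,c}$ pair $(S_t,V_t)$, and for each $t$ set $k(t) \defineas \floor{\log_\eta((V_t\bmax m)/m)}$, so that $V_t\bmax m \in I_{k(t)}$. If $V_t \ge m$ then $V_t = V_t\bmax m \in I_{k(t)}$, so $S_t \ge \Scal_\alpha(V_t\bmax m)$ implies $S_t \ge u_{k(t)}(V_t)$ by the comparison; if $V_t < m$ then $k(t) = 0$ and, $u_0$ being increasing, $S_t \ge \Scal_\alpha(m) \ge u_0(m) \ge u_0(V_t)$. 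Either way,
\begin{align*}
\left\{\exists t \ge 1 : S_t \ge \Scal_\alpha(V_t\bmax m)\right\} \subseteq \bigcup_{k\ge0}\left\{\exists t\ge1 : S_t \ge u_k(V_t)\right\},
\end{align*}
so by \cref{th:uniform_chernoff} and subadditivity the probability of the left-hand event is at most $\sum_{k\ge0}\alpha_k \le \alpha$; taking the supremum over all such pairs proves the first claim. For \eqref{eq:late_crossing_decay}, observe that $V_t \ge v_0 \ge m$ forces $k(t) \ge \floor{\log_\eta(v_0/m)}$, so the union above may be restricted to those $k$, which gives the bound $\alpha\sum_{k\ge\floor{\log_\eta(v_0/m)}}1/h(k) \le \sum_{k\ge\floor{\log_\eta(v_0/m)}}1/h(k)$ since $\alpha < 1$.

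The main obstacle is the per-epoch comparison and, within it, the tuning of $\lambda_k$: one must make the linear boundary tangent to the sub-gamma envelope at the geometric \emph{midpoint} of the epoch (not at an endpoint) to obtain the smallest possible $k_1$, and one must push the sub-gamma correction $1/(1+c\rho_k)$ through the algebra so that the slope of $u_k$ collapses to $\rho_k/2$ and $\lambda_k$ stays strictly below $\lambda_{\max}$ for every $c \ge 0$. The geometric tiling, the union bound, and the two boundary regimes $V_t < m$ and $V_t \ge v_0$ are then routine bookkeeping.
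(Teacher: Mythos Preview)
Your argument is correct and follows the same high-level stitching strategy as the paper: geometric epochs $[m\eta^k, m\eta^{k+1})$, one linear \cref{th:uniform_chernoff} boundary per epoch with error $\alpha/h(k)$, and a union bound. The handling of the $V_t<m$ regime and of the late-crossing tail \eqref{eq:late_crossing_decay} is the same in both.

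The execution of the per-epoch comparison is genuinely different, however. The paper fixes $\lambda_k$ by the condition $\psi_{G,c}(\lambda_k)=\ell_k/(v_k\sqrt\eta)$ (equivalently, the standardized boundary $g_{\lambda_k,\ell_k}(v)/\sqrt v$ is equal at the two epoch endpoints), then bounds the resulting line by a chain of inequalities involving $K(u)=\sqrt{2u}/\psi_G^{-1}(u)$ and the estimate $\eta^{k+1/2}\ge v/\sqrt\eta$. You instead reparametrize via $\rho=\lambda/(1-c\lambda)$, which collapses the sub-gamma line to the sub-Gaussian-looking form $\ell_k/\rho+c\ell_k+(\rho/2)v$; choosing $\rho_k=\sqrt{2\ell_k/(v_k\sqrt\eta)}$ makes this tangent to the Cram\'er--Chernoff envelope $\sqrt{2v\ell_k}+c\ell_k$ at the geometric midpoint and yields the exact endpoint values $k_1\sqrt{v_k\ell_k}+c\ell_k$ and $k_1\sqrt{v_{k+1}\ell_k}+c\ell_k$. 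For $c=0$ the two choices of $\lambda_k$ coincide; for $c>0$ they differ, but both are admissible. What your reparametrization buys is a two-line comparison: the endpoint inequalities reduce to $\sqrt{A+B}\ge\sqrt A$ together with $k_2\ge1$, and concavity of $v\mapsto\sqrt{k_1^2v\ell_k+k_2^2c^2\ell_k^2}$ fills in the interior. This is tidier than the paper's route and makes it transparent why $k_1$ and $k_2$ take the values they do.
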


\begin{figure}
\centering
\includegraphics{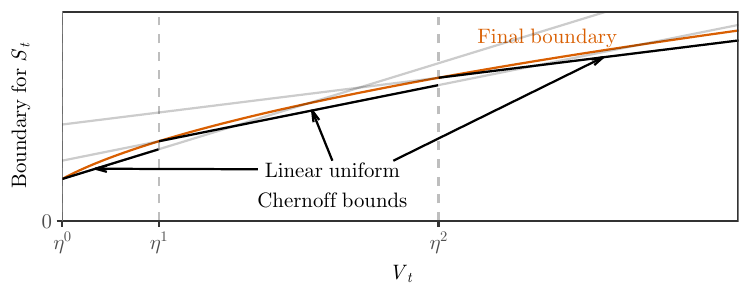}
\caption{
Illustration of \cref{th:stitching}, stitching together linear
  boundaries to construct a curved boundary. We break time into
  geometrically-spaced epochs $\eta^k \leq V_t < \eta^{k+1}$, construct a linear
  uniform bound using \cref{th:uniform_chernoff} optimized for each epoch, and
  take a union bound over all crossing events. The final boundary is a smooth
  analytical upper bound to the piecewise linear bound. \label{fig:stitching}}
  
\end{figure}

The first sentence above says that the probability of $S_t$ crossing
$\Scal_\alpha(V_t \bmax m)$ at least once is at most $\alpha$, while the second
says that, even if it does happen to cross once or more, the probability of
further crossings decays to zero beyond larger and larger intrinsic times
$v_0$. Note that \eqref{eq:late_crossing_decay} implies
$\P\eparen{\sup_t V_t = \infty \text{ and } S_t \geq \Scal_\alpha(V_t) \text{
    infinitely often}} = 0$. The proof of \cref{th:stitching},
  given with discussion in \cref{sec:proof_stitching}, follows by taking a union
  bound over a carefully chosen family of linear boundaries, one for each of a
  sequence of geometrically-spaced epochs; see \cref{fig:stitching}. The
  high-level proof technique is standard, often referred to as ``peeling'' in
  the bandit literature, and closely related to chaining elsewhere in
  probability theory. Our proof generalizes beyond the sub-Gaussian case and
  involves careful parameter choices in order to achieve tight constants. In
  brief, within each epoch, there are many possible linear boundaries, and we
  have found that optimizing the linear boundary for the geometric mean of the
  epoch endpoints strikes a good balance between tight constants and analytical
  simplicity in the final boundary.  \Cref{sec:finite_lil_details} gives a
  detailed comparison of constants arising from our bound with similar bounds
  from the literature.

 The boundary shape is determined by choosing the function $h$
  and setting the nominal crossing probability in the \kth epoch to equal
  $\alpha/h(k)$. Then \cref{th:stitching} gives a curved boundary which grows at
  a rate $\Ocal\eparen{\sqrt{V_t \log h(\log_\eta V_t)}}$ as
  $V_t \uparrow \infty$.  The more slowly $h(k)$ grows as $k \uparrow \infty$,
  the more slowly the resulting boundary will grow as $V_t \uparrow \infty$. A
  simple choice is exponential growth, $h(k) = \eta^{sk} / (1 - \eta^{-s})$ for
  some $s > 1$, yielding $\Scal_\alpha(v) = \Ocal(\sqrt{v \log v})$. A more
interesting example is $h(k) = (k + 1)^s \zeta(s)$ for some $s > 1$, where
$\zeta(s)$ is the Riemann zeta function. Then, when $l_0 = 1$,
\cref{th:stitching} yields the \emph{polynomial stitched boundary}: for
$c \geq 0$,
\begin{align}
  \Scal_\alpha(v) =
    k_1 \sqrt{
      v \eparen{s \log\log\pfrac{\eta v}{m}
      + \log \frac{\zeta(s)}{\alpha \log^s \eta}}
    }
    + c k_2
      \eparen{s \log\log\pfrac{\eta v}{m}
      + \log \frac{\zeta(s)}{\alpha \log^s \eta}},
  \label{eq:poly_stitching}
\end{align}
where the second term is neglected in the sub-Gaussian case since $c=0$. This is
a ``finite LIL bound'', so-called because
$\Scal_\alpha(v) \sim \sqrt{s k_1^2 v \log \log v}$, matching the form of the
law of the iterated logarithm
\citep{stout_hartman-wintner_1970}. We can bring $s k_1^2$
  arbitrarily close to $2$ by choosing $\eta$ and $s$ sufficiently close to one,
  at the cost of inflating the additive term $\log(\zeta(s) / (\log^s
  \eta))$. Briefly, increasing $\eta$ increases the size of each epoch in the
  aforementioned peeling argument, which reduces the looseness of the union
  bound over epochs. But the larger we make the epochs, the further each linear
  boundary deviates from the ideal curved shape at the ends of the epochs, which
  inflates our final boundary. The choice of $s$ involves a similar tradeoff:
  increasing $s$ causes us to exhaust more of our total error probability budget
  on earlier epochs, decreasing the constant term (which matters most for early
  times), at the cost of a union bound over smaller error probabilities in later
  epochs, which shows up as an increase in the leading constant. We discuss
  parameter tuning in more practical terms in \cref{sec:tuning}. For example,
take $\smash{\eta = 2, s = 1.4, m = 1}$; if $S_t$ is a sum of independent,
zero-mean, 1-sub-Gaussian observations, we obtain
\begin{align}
\P\eparen{
  \exists t \geq 1 :
  S_t \geq 1.7 \sqrt{t \eparen{\log \log(2t) + 0.72 \log \pfrac{5.2}{\alpha}}}
} \leq \alpha. \label{eq:poly_stitching_explicit}
\end{align}

\Cref{fig:finite_lil_boundaries} in \cref{sec:finite_lil_details} compares a
sub-Gaussian stitched boundary to a numerically-computed discrete mixture bound
with a mixture distribution roughly corresponding to $h(k) \propto (k+1)^{1.4}$,
as described in \cref{sec:stitching_mixture}. This discrete mixture boundary
acts as a lower bound (see \cref{sec:admissibility}) and shows that not too much
is lost by the approximations involved in the stitching construction.
\Cref{fig:finite_lil_all} compare the same stitched boundary to related bounds
from the literature; our bound shows slightly improved constants over the best
known bounds.

Although our stitching construction begins with a sub-gamma assumption, it
applies to other sub-$\psi$ cases, including sub-Bernoulli, sub-Poisson and
sub-exponential cases; see \cref{fig:psi_boundary_relations} and
\cref{th:universal}. Further, our stitched bounds apply equally well
in continuous-time settings to Brownian motion, continuous martingales,
martingales with bounded jumps, and martingales whose jumps satisfy a Bernstein condition; 
see \cref{th:continuous_time}.

While our focus is on nonasymptotic results, \cref{th:stitching} makes it easy
to obtain the following general upper asymptotic LIL, proved in
\cref{sec:proof_asymptotic_lil}:
\begin{corollary}\label{th:asymptotic_lil}

  Suppose $(S_t)$ is sub-$\psi$ with variance process $(V_t)$ and
  $\psi(\lambda) \sim \lambda^2 / 2$ as $\lambda \downarrow 0$. Then
  \begin{align}
    \limsup_{t \to \infty} \frac{S_t}{\sqrt{2 V_t \log \log V_t}} \leq 1
    \quad \text{on } \ebrace{\sup_t V_t = \infty}.
  \end{align}
\end{corollary}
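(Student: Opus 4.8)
The plan is to rerun the geometric-epoch ``peeling'' argument behind \cref{th:stitching}, but with the stitching parameters pushed to their limiting values and the per-epoch linear boundaries tuned so their slopes vanish; the hypothesis $\psi(\lambda) \sim \lambda^2/2$ is exactly what lets us recover the sharp Gaussian constant from those vanishing slopes. Fix $\delta > 0$ and $\eta > 1$. Since $\psi(\lambda)/(\lambda^2/2) \to 1$ as $\lambda \downarrow 0$, pick $\lambda_\delta \in (0, \lambda_{\max})$ with $\psi(\lambda) \leq (1+\delta)\lambda^2/2$ on $(0, \lambda_\delta]$. Set $v_k \defineas \eta^k m$ for a fixed $m > 0$. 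For each $k$ large enough that $\log v_k > \max\{1, l_0^{1/(1+\delta)}\}$, let $\alpha_k \defineas l_0 / (\log v_k)^{1+\delta}$, so that $\log(l_0/\alpha_k) = (1+\delta)\log\log v_k$ and $\sum_k \alpha_k < \infty$, and apply \cref{th:uniform_chernoff} with this $\alpha_k$ and with $\lambda = \lambda_k \defineas \sqrt{2\log\log v_k / (\eta v_k)}$ to get a sub-$\psi$ linear boundary $u_k(v) = \log(l_0/\alpha_k)/\lambda_k + (\psi(\lambda_k)/\lambda_k)v$ with crossing probability $\alpha_k$. Because $\lambda_k \downarrow 0$, eventually $\lambda_k \leq \lambda_\delta$, and then $\psi(\lambda_k) \leq (1+\delta)\lambda_k^2/2$, so (noting $\lambda_k$ minimizes $A/\lambda + \tfrac{(1+\delta)\eta v_k}{2}\lambda$ with $A = (1+\delta)\log\log v_k$) a short calculation gives $u_k(v_{k+1}) \leq (1+\delta)\sqrt{2\eta\, v_k \log\log v_k}$.

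I would then stitch these into a single curved boundary as in \cref{fig:stitching}: for $v \geq v_K$ (with $K$ large enough to absorb all the ``eventually'' requirements), set $u(v) \defineas u_{\kappa(v)}(v)$, where $\kappa(v) \defineas \floor{\log_\eta(v/m)}$ indexes the epoch containing $v$, and $u(v) \defineas \infty$ for $v < v_K$. Since each $u_k$ is increasing and $v \in [v_{\kappa(v)}, v_{\kappa(v)+1})$, we get the pointwise bound $u(v) \leq u_{\kappa(v)}(v_{\kappa(v)+1}) \leq (1+\delta)\sqrt{2\eta\, v \log\log v}$ for all large $v$. A union bound over epochs, in the spirit of \eqref{eq:late_crossing_decay}, gives for every $v_0 \geq v_K$
\[
  \P\bigl(\exists t : V_t \geq v_0 \text{ and } S_t \geq u(V_t)\bigr)
    \leq \sum_{k \geq \kappa(v_0)} \alpha_k \xrightarrow[v_0 \to \infty]{} 0 ,
\]
so with probability one there is a (random) finite $v_0$ beyond which $S_t < u(V_t)$ whenever $V_t \geq v_0$. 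On $\{\sup_t V_t = \infty\}$ --- where $(V_t)$ is nondecreasing in all our constructions, hence $V_t \to \infty$ --- this and the pointwise bound on $u$ give $\limsup_{t\to\infty} S_t / \sqrt{2 V_t \log\log V_t} \leq (1+\delta)\sqrt\eta$ almost surely. Intersecting over a countable sequence $(\delta, \eta) \downarrow (0, 1)$ yields the bound $1$.

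The crux is the \emph{sharp} constant. Passing to the sub-gamma case via \cref{th:universal} and invoking the polynomial stitched boundary \eqref{eq:poly_stitching} would, even after optimizing the stitching parameters, only give $\limsup \leq \sqrt{k_2}$ with $k_2 > 1$ the variance-rescaling constant that \cref{th:universal} attaches to $\psi$ --- not the value $1$; moreover \cref{th:universal} assumes $\psi$ twice differentiable, which the corollary does not, so a black-box reduction is unavailable in general anyway. Getting $1$ forces us to reopen the peeling argument and use two facts at once: (i) the per-epoch optimal slopes $\lambda_k$ must be sent to zero so that $\psi(\lambda_k) \leq (1+\delta)\lambda_k^2/2$ reproduces the Gaussian Chernoff exponent $S^2/2V$ --- this is precisely where $\psi(\lambda) \sim \lambda^2/2$ is consumed --- and (ii) evaluating $u_k$ at the far endpoint $v_{k+1} = \eta v_k$ injects a spurious $\sqrt\eta$ that only disappears as $\eta \downarrow 1$. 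A minor thing to verify is that $\eta \downarrow 1$ is ``for free'': shrinking epochs means more of them, but since we only need $\sum_k \alpha_k < \infty$ rather than a fixed error budget, and $\alpha_k \propto (\log v_k)^{-(1+\delta)}$ keeps $\log(l_0/\alpha_k)$ at the LIL scale $\log\log v_k$, the error sum stays finite throughout.
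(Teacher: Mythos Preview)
Your peeling argument is correct and yields the sharp constant. The paper, however, takes a shorter route: rather than reopening the per-epoch construction, it observes that for any $\epsilon>0$ one can pick $a>0$ with $\psi(\lambda)\le(1+\epsilon)\lambda^2/2\le(1+\epsilon)\psi_{G,1/a}(\lambda)$ on $(0,a)$, so $(S_t)$ is sub-gamma with scale $c=1/a$ and variance process $((1+\epsilon)V_t)$. It then invokes \cref{th:stitching} black-box---in particular the tail statement \eqref{eq:late_crossing_decay}---choosing the polynomial stitched boundary \eqref{eq:poly_stitching} with $s,\eta$ close to $1$ so that $\Scal_\alpha(v)\sim\sqrt{2v\log\log v}$, and then lets $\epsilon\downarrow 0$. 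Your approach is more self-contained (it needs only \cref{th:uniform_chernoff}, not the full \cref{th:stitching}), while the paper's is shorter because the epoch calculus has already been packaged.

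Your third paragraph mischaracterizes the alternative. You argue that a sub-gamma black-box reduction would leave a residual constant $\sqrt{k_2}>1$, but that is only true if one routes through \cref{th:universal}, whose constants are fixed by $\psi$. The paper's reduction does not use \cref{th:universal}: it builds the sub-gamma majorant directly from the hypothesis $\psi(\lambda)\sim\lambda^2/2$, with the variance rescaling equal to $(1+\epsilon)$ for \emph{arbitrary} $\epsilon>0$ and the scale $c=1/a$ chosen merely to truncate the domain to where the $(1+\epsilon)$ bound holds. Since $c$ enters the stitched boundary only through the lower-order term $ck_2\ell(v)$, it disappears in the $v\to\infty$ asymptotic, and the leading constant $k_1\sqrt{s(1+\epsilon)}$ can be driven to $\sqrt{2}$. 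So the sub-gamma route does deliver the sharp value $1$; it just requires a slightly more careful reduction than the one you dismissed.
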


\subsection{Conjugate mixture boundaries}\label{sec:conjugate_mixtures}

For appropriate choice of mixing distribution $F$, the integral
$\int \expebrace{\lambda S_t - \psi(\lambda) V_t} \d F(\lambda)$ will be
analytically tractable. Since, under \cref{th:canonical_assumption}, this
mixture process is upper bounded by a mixture supermartingale
$\int L_t(\lambda) \d F(\lambda)$, such mixtures yield closed-form or
efficiently computable curved boundaries, which we call conjugate mixture
boundaries. This approach is known as the method of mixtures, one of the most
widely-studied techniques for constructing uniform bounds
\citep{ville_etude_1939, wald_sequential_1945, darling_further_1968,
  robbins_statistical_1970, robbins_probability_1969, robbins_boundary_1970,
  lai_confidence_1976,kaufmann2018mixture}. Unlike the stitched bound of
\cref{th:stitching}, which involves a small amount of looseness in the
analytical approximations, mixture boundaries involve no such approximations
and, in the sub-Gaussian case, are unimprovable in the sense described in
\cref{sec:admissibility}. We restate the following standard idea behind the
method of mixtures using our definitions, with a proof in
\cref{sec:proof_mixtures}. The proof details a technical condition on product
measurability which we require of $L_t$.

\begin{lemma}\label{th:basic_mixture}
  For any probability distribution $F$ on $[0, \lambda_{\max})$ and
  $\alpha \in (0,1)$,
\begin{align}
\Mcal_\alpha(v) := \sup\Bigg\lbrace
  s \in \R: \underbrace{\int \expebrace{\lambda s - \psi(\lambda) v} \d F(\lambda)}_{\defineright m(s,v)}
  < \frac{l_0}{\alpha}
\Bigg\rbrace \label{eq:mixture_operator}
\end{align}
is a sub-$\psi$ uniform boundary with crossing probability $\alpha$, so long as
the supermartingale $(L_t)$ of \cref{th:canonical_assumption} is product
measurable when the underlying probability space is augmented with the
independent random variable $\lambda$.
\end{lemma}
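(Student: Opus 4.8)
The plan is to run the classical method-of-mixtures argument: turn the family $(L_t(\lambda))$ into a single nonnegative supermartingale by integrating against $F$, and then apply Ville's maximal inequality for nonnegative supermartingales. Fix an arbitrary pair $(S_t,V_t)\in\subpsiclass$ and let $(L_t(\lambda))_{t\ge 0}$, for $\lambda\in[0,\lambda_{\max})$, be the supermartingales furnished by \cref{th:canonical_assumption}, so that $\E L_0(\lambda)\le l_0$ and $\exp\{\lambda S_t-\psi(\lambda)V_t\}\le L_t(\lambda)$ a.s. for every $t$. Set $M_t\defineas\int L_t(\lambda)\,\d F(\lambda)$. The first step is to verify that $(M_t)$ is a nonnegative supermartingale with $\E M_0\le l_0$: nonnegativity is immediate; $\Fcal_t$-measurability of $M_t$ and the interchange $\E[M_{t+1}\mid\Fcal_t]=\int\E[L_{t+1}(\lambda)\mid\Fcal_t]\,\d F(\lambda)\le\int L_t(\lambda)\,\d F(\lambda)=M_t$ both follow from the stated product-measurability hypothesis on $(L_t)$ together with Tonelli's theorem (using $L_t\ge 0$), and then $\E M_0=\int\E L_0(\lambda)\,\d F(\lambda)\le l_0$.

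The second step is to translate a crossing of $\Mcal_\alpha$ by $(S_t)$ into a crossing of the fixed level $l_0/\alpha$ by $(M_t)$. Since $\lambda\ge 0$ on the support of $F$, the map $s\mapsto m(s,v)=\int\exp\{\lambda s-\psi(\lambda)v\}\,\d F(\lambda)$ is nondecreasing, and it is lower semicontinuous because it is the increasing limit of the continuous functions $s\mapsto\int_{[0,n]}\exp\{\lambda s-\psi(\lambda)v\}\,\d F(\lambda)$. Hence $\{s:m(s,v)<l_0/\alpha\}$ is open, so its supremum $\Mcal_\alpha(v)$ satisfies $m(\Mcal_\alpha(v),v)\ge l_0/\alpha$, and by monotonicity $m(s,v)\ge l_0/\alpha$ for every $s\ge\Mcal_\alpha(v)$. (Using $\psi\ge 0$, as holds for every $\psi$ considered here, one checks $m(0,v)\le 1<l_0/\alpha$, so $\Mcal_\alpha(v)\ge 0$ and the boundary is nontrivial.) Combining this with $M_t\ge m(S_t,V_t)$ a.s.\ — which holds because $L_t(\lambda)\ge\exp\{\lambda S_t-\psi(\lambda)V_t\}$ pointwise — yields $\{S_t\ge\Mcal_\alpha(V_t)\}\subseteq\{m(S_t,V_t)\ge l_0/\alpha\}\subseteq\{M_t\ge l_0/\alpha\}$ for each $t$, and therefore $\{\exists t\ge 1:S_t\ge\Mcal_\alpha(V_t)\}\subseteq\{\exists t\ge 0:M_t\ge l_0/\alpha\}$.

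The final step is Ville's inequality applied to the nonnegative supermartingale $(M_t)$: $\P(\exists t\ge 0:M_t\ge l_0/\alpha)\le\E M_0/(l_0/\alpha)\le l_0\cdot\alpha/l_0=\alpha$. Chaining the inclusions gives $\P(\exists t\ge 1:S_t\ge\Mcal_\alpha(V_t))\le\alpha$, and since $\Mcal_\alpha$ does not depend on the particular process, taking the supremum over $(S_t,V_t)\in\subpsiclass$ establishes that $\Mcal_\alpha$ is a sub-$\psi$ uniform boundary with crossing probability $\alpha$.

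The genuine obstacle here is purely measure-theoretic bookkeeping rather than probabilistic content: one must state precisely the ``product measurable'' condition (joint measurability of $(\omega,\lambda)\mapsto L_t(\lambda)(\omega)$ with respect to $\Fcal_t$ tensored with the Borel $\sigma$-algebra on $[0,\lambda_{\max})$) and invoke a conditional Fubini/Tonelli theorem to license both the adaptedness of $(M_t)$ and the passage of $\E[\cdot\mid\Fcal_t]$ through the integral; this is the technical point the lemma statement flags and the one I would write out most carefully. A secondary subtlety, worth a sentence in the proof, is the lower-semicontinuity argument, which is exactly what guarantees that $s\ge\Mcal_\alpha(v)$ forces $m(s,v)\ge l_0/\alpha$ (the supremum is ``attained from the right'' in the relevant sense), closing the inclusion $\{S_t\ge\Mcal_\alpha(V_t)\}\subseteq\{M_t\ge l_0/\alpha\}$ without any continuity assumption on $F$.
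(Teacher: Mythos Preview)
Your proposal is correct and follows essentially the same method-of-mixtures route as the paper: form the mixture $M_t=\int L_t(\lambda)\,\d F(\lambda)$, verify it is a nonnegative supermartingale with $\E M_0\le l_0$, and apply Ville's inequality. The only cosmetic differences are that the paper frames the Fubini step by augmenting the probability space with an independent $\lambda\sim F$ and writing $M_t=\E[L_t\mid\Fcal_t]$ (then using the tower property), whereas you invoke conditional Tonelli directly; and you spell out the lower-semicontinuity argument for the inclusion $\{S_t\ge\Mcal_\alpha(V_t)\}\subseteq\{m(S_t,V_t)\ge l_0/\alpha\}$, which the paper leaves as ``by the definition of $\Mcal_\alpha$.''
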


For each of our conjugate mixture bounds, we compute $m(s, v)$ in closed-form. The boundary $u(v)$ can then be computed by numerically solving the
equation $m(s, v) = l_0 / \alpha$ in $s$, as we show in
\cref{sec:root_finding}. When an identical sub-$\psi$ condition applies to
$(-S_t)$ as well as $(S_t)$, we may apply a uniform boundary to both tails and
take a union bound, obtaining a two-sided confidence sequence. However, mixing
over $\lambda \in \R$ rather than $\lambda \in \R_{\geq 0}$ yields a two-sided
bound directly, so in some cases we present two-sided variants along with their
one-sided counterparts. We give details for the following conjugate mixture
boundaries in \cref{sec:proof_mixtures}:
\begin{itemize}
\item one-, two-sided \emph{normal mixture} boundaries
  (sub-Gaussian case);
\item one-, two-sided \emph{beta-binomial mixture} boundaries (sub-Bernoulli case);
\item one-sided \emph{gamma-Poisson mixture} boundary (sub-Poisson
  case); and
\item one-sided \emph{gamma-exponential mixture} boundary (sub-exponential case).
\end{itemize}

The two-sided normal mixture boundary has a closed form
expression:
\begin{align}
  u(v) \defineas \sqrt{
  (v + \rho)
  \log\eparen{\frac{l_0^2 (v + \rho)}{\alpha^2 \rho}}
  }. \label{eq:two_sided_normal_mixture}
\end{align}
The one-sided normal mixture boundary has a similar, closed-form upper bound,
making these especially convenient. It is clear from
\eqref{eq:two_sided_normal_mixture} that the normal mixture boundary grows as
$\Ocal(\sqrt{v \log v})$ asymptotically, and this rate is shared by all of our
conjugate mixture boundaries. Indeed, \cref{th:mixture_rate}
  below, proved in \cref{sec:proof_mixture_rate}, shows that such a rate holds
  for any mixture boundary as given by \eqref{eq:mixture_operator} whenever the
  mixing distribution is continuous with positive density at and around
  the origin, a property which holds for all mixture distributions used in our
  conjugate mixture boundaries, subject to regularity conditions on $\psi$ which
  hold for the CGF of any nontrivial, mean-zero r.v. and specifically
  for the five $\psi$ functions in \cref{sec:prelims}.
  
\begin{proposition}\label{th:mixture_rate}
  Assume $(i)$ $\psi$ is nondecreasing, $\psi(0) = \psi'(0_+) = 0$,
  $\psi''(0_+) = c > 0$, and $\psi$ has three continuous derivatives on a
  neighborhood including the origin; and $(ii)$ $F$ has density $f$ (w.r.t.
  Lebesgue) which is continuous and positive on a neighborhood including the
  origin. Then
  \begin{align}
    \Mcal_\alpha(v) = \sqrt{
      v \ebracket{c\log\pfrac{c l_0^2 v}{2\pi \alpha^2 f^2(0)} + o(1)}}
    \quad \text{as } v \to \infty.
    \label{eq:mixture_rate}
  \end{align}
\end{proposition}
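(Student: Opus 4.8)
The plan is to treat $\Mcal_\alpha(v)$ as the (essentially unique) root $s$ of the equation $m(s,v) = l_0/\alpha$, where $m(s,v) = \int e^{\lambda s - \psi(\lambda)v}\,dF(\lambda)$, and to obtain sharp two-sided asymptotics for $m(s,v)$ via Laplace's method (saddle-point approximation). First I would record elementary facts about $m$: for $\lambda \ge 0$ the map $s \mapsto e^{\lambda s}$ is nondecreasing, so $s\mapsto m(s,v)$ is nondecreasing, and strictly increasing wherever finite since $f>0$ near the origin; it is left-continuous by monotone convergence and continuous below any point of finiteness by dominated convergence ($e^{\lambda s} \le e^{\lambda s_0}$ for $s\le s_0$, $\lambda\ge0$). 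Hence $m(\Mcal_\alpha(v),v)\le l_0/\alpha \le m(s,v)$ for every $s>\Mcal_\alpha(v)$, and it suffices to prove that, writing $T_v \defineas \tfrac{c l_0^2 v}{2\pi\alpha^2 f^2(0)}$, for each $\epsilon>0$ and all large $v$ we have $m(s,v) > l_0/\alpha$ whenever $s^2 \ge (1+\epsilon)\,cv\log T_v$ and $m(s,v) < l_0/\alpha$ whenever $s^2 \le (1-\epsilon)\,cv\log T_v$. The point is that both candidate values obey $s/v \to 0$, which is exactly what forces the maximizer of the exponent toward the origin, where the hypotheses on $\psi$ and $f$ apply.

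For such $s$ (so $s \asymp \sqrt{v\log v}$), the exponent $g(\lambda)\defineas \lambda s - \psi(\lambda)v$ is maximized at $\hat\lambda$ solving $\psi'(\hat\lambda) = s/v$; since $\psi'(\lambda) = c\lambda + O(\lambda^2)$ near $0$ this gives $\hat\lambda = \tfrac{s}{cv}(1+o(1)) \to 0$, $g(\hat\lambda) = \tfrac{s^2}{2cv} + O(v\hat\lambda^3) = \tfrac{s^2}{2cv} + o(1)$ (because $v\hat\lambda^3 \asymp (\log v)^{3/2} v^{-1/2}\to 0$), and $g''(\hat\lambda) = -v\psi''(\hat\lambda) = -cv(1+o(1))$, all using the three continuous derivatives of $\psi$ near $0$. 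I would split the integral at $|\lambda-\hat\lambda|\le\delta_v$ versus the complement, choosing $\delta_v$ so that $v\delta_v^3\to 0$ (so the cubic Taylor remainder of $v\psi$ is uniformly $o(1)$ on the window) and $v\delta_v^2\to\infty$ (so the Gaussian tail outside the window is negligible relative to $\sqrt{2\pi/(cv)}$); $\delta_v = v^{-2/5}$ works. On the window $f(\lambda) = f(0)(1+o(1))$ by uniform continuity of $f$ near $0$, and completing the Gaussian integral yields
\begin{align}
m(s,v) = f(0)\,\exp\!\eparen{\tfrac{s^2}{2cv}}\sqrt{\tfrac{2\pi}{cv}}\,(1+o(1)),
\end{align}
uniformly over the relevant interval of $s$. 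The mass of $F$ on $\{\lambda\ge\epsilon_0\}$ for a fixed small $\epsilon_0$ with $\psi(\epsilon_0)>0$ contributes at most $e^{-\psi(\epsilon_0)v}\int_{\epsilon_0}^{\lambda_{\max}} e^{\lambda s}\,dF(\lambda)$, which $\to 0$ since $s = o(v)$ (using $\lambda_{\max}<\infty$, or a mild moment condition on $F$ when $\lambda_{\max}=\infty$), and this is dominated by the main term, which diverges.

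Finally I would invert: equating the displayed expression to $l_0/\alpha$ and taking logarithms,
\begin{align}
\frac{s^2}{2cv} = \log\frac{l_0}{\alpha} - \log f(0) - \tfrac12\log\frac{2\pi}{cv} + o(1) = \tfrac12\log T_v + o(1),
\end{align}
so $s^2 = cv\big(\log T_v + o(1)\big)$, i.e. $\Mcal_\alpha(v) = \sqrt{v[\,c\log T_v + o(1)\,]}$; feeding the $(1\pm\epsilon)$ versions of the displayed asymptotic into the monotonicity of $m$ in $s$ converts these into matching upper and lower bounds on $\Mcal_\alpha(v)$. The main obstacle is making the Laplace step rigorous with a \emph{moving} maximizer: one must pick the window $\delta_v$ to simultaneously kill the cubic remainder of $v\psi(\cdot)$, absorb the variation of $f$, and beat the Gaussian tail, all \emph{uniformly} over an interval of $s$-values, and then argue separately that the bulk of $F$ away from $0$ is negligible despite $s\to\infty$ — this last estimate being the only place any global (as opposed to near-origin) information about $\psi$ or $F$ is invoked.
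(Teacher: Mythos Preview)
Your approach is fundamentally the same as the paper's: both obtain the asymptotic $m(s,v)\sim f(0)\sqrt{2\pi/(cv)}\,\exp\{s^2/(2cv)\}$ by Laplace's method and then invert. The paper, however, outsources the Laplace step entirely to a black-box citation --- Theorem~4 of \citet{fulks_generalization_1951}, a Laplace-type expansion for integrals with a moving parameter --- and spends its effort on four preparatory lemmas verifying the hypotheses of that theorem: that $m(\Mcal_\alpha(v),v)=l_0/\alpha$ for large $v$, that $\Mcal_\alpha(v)\to\infty$, and that $\Mcal_\alpha(v)=o(v)$. You instead carry out the saddle-point analysis by hand with an explicit window $\delta_v=v^{-2/5}$ and a sandwich in $s$, which is more self-contained but leaves more bookkeeping to be filled in. The one place where the paper's argument is genuinely sharper is the tail: rather than splitting $e^{\lambda s-\psi(\lambda)v}\le e^{-\psi(\epsilon_0)v}e^{\lambda s}$ (which, as you correctly flag, forces either $\lambda_{\max}<\infty$ or an unstated moment condition on $F$), the paper keeps the exponent $\lambda s-\psi(\lambda)v = v(\lambda b-\psi(\lambda))$ intact and argues that for $b=s/v$ small enough it is decreasing in $\lambda$ on $[\delta,\lambda_{\max})$, so the integrand is uniformly bounded and integrating against the probability measure $F$ gives finiteness and decay with no tail assumption on $F$ at all.
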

Note that $f$ need not place mass on all of $[0, \lambda_{\max})$, only near the
origin, for the asymptotic rate to hold. \Cref{th:mixture_rate} shows how the
asymptotic behavior of any such mixture bound depends only on the behavior of
$\psi$ and $f$ near the origin, a result reminiscent of the central limit
theorem. Analogous, related results for the sub-Gaussian special case using
$\psi(\lambda) = \lambda^2 / 2$ can be found in \citet[Section
4]{robbins_boundary_1970} and \citet[Theorem 2]{lai_boundary_1976}, in some
cases under weaker assumptions on $F$.

In contrast to previous derivations of conjugate mixture
  boundaries in the literature, all of our conjugate mixture boundaries include a common tuning
parameter $\rho > 0$ which controls the sample size for which the boundary is
optimized. Such tuning is critical in practice, as we explain in
\cref{sec:tuning}, but has been ignored in much prior work. Additionally, with
the exception of the sub-Gaussian case, most prior work on the method of
mixtures has focused on parametric settings. We instead emphasize the
applicability of these bounds to nonparametric settings. For example, when the
observations are bounded, one may construct a confidence sequence making use of
empirical-Bernstein estimates (\cref{th:empirical_variance}) based on our
gamma-exponential mixture (\cref{th:gamma_mixture}). See
\cref{sec:reference_tables} for other conditions in which mixture bounds yield
nonparametric uniform boundaries.

\subsection{Numerical bounds using discrete mixtures}
\label{sec:discrete_mixture}

In applications, one may not need an explicit closed-form expression so long as
the bound can be easily computed numerically. Our discrete mixture method is an
efficient technique for numerical computation of curved boundaries for processes
satisfying \cref{th:canonical_assumption}. It permits arbitrary mixture
densities, thus producing boundaries growing at the rate
$\Ocal(\sqrt{v \log \log v})$.
Recall that the shape of the stitched bound was determined by the user-specified
function $h$. For the discrete mixture bound, one instead specifies a
probability density $f$ over finite support $(0, \overlambda]$ for some
$\overlambda \in (0, \lambda_{\max})$. We first discretize $f$ using a series of
support points $\lambda_k$, geometrically spaced according to successive powers
of some $\eta > 1$, and an associated set of weights $w_k$: 
\begin{align}
    \lambda_k \defineas \frac{\overlambda}{\eta^{k+1/2}}
    \quad \text{and} \quad
    w_k \defineas
      \frac{\overlambda (\eta - 1) f(\lambda_k\sqrt{\eta})}{\eta^{k+1}}
    \quad \text{for} \quad k = 0, 1, 2, \dots.
  \label{eq:discretization}
\end{align}
\begin{theorem}[Discrete mixture bound]\label{th:discrete_mixture}
  Fix $\psi: [0, \lambda_{\max}) \to \R$, $\alpha~\in~(0,1)$,
  $\overlambda \in (0, \lambda_{\max})$, and a probability density $f$ on
  $(0, \overlambda]$ that is nonincreasing and positive. For supports
  $\lambda_k$ and weights $w_k$ defined in \eqref{eq:discretization},
  \begin{equation}
    \Mtil_\alpha(v) \defineas \sup\ebrace{
      s \in \R :
      \sum_{k=0}^\infty w_k \expebrace{\lambda_k s - \psi(\lambda_k) v}
      < \frac{l_0}{\alpha}
    }, \label{eq:discrete_mixture_sum}
  \end{equation}
  is a sub-$\psi$ uniform boundary with crossing probability
  $\alpha$.
\end{theorem}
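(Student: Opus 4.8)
The plan is to reduce \cref{th:discrete_mixture} to \cref{th:basic_mixture} by exhibiting the weights $(w_k)$ as (possibly subnormalized) mixing masses of a discrete probability measure $F$ supported on the points $(\lambda_k)$, and then observing that scaling down $F$ only makes the boundary larger, hence still valid. Concretely, I would first check that the $w_k$ given by \eqref{eq:discretization} are nonnegative (immediate from $f \geq 0$, $\eta > 1$) and that $\sum_k w_k \leq 1$. The latter is the crux: since $f$ is nonincreasing on $(0, \overlambda]$, on each geometric cell $(\lambda_k, \lambda_{k+1}] = (\overlambda \eta^{-k-1/2}, \overlambda\eta^{-k+1/2}]$ of Lebesgue length $\overlambda\eta^{-k+1/2}(1 - \eta^{-1}) = \overlambda(\eta-1)\eta^{-k-1/2}$, the value $f(\lambda_k\sqrt{\eta})$ is taken at the \emph{left} endpoint's geometric companion — I need to verify the indexing so that $w_k$ is a lower Riemann sum contribution, giving $\sum_k w_k \leq \int_0^{\overlambda} f = 1$. (If the chosen point makes it an upper sum instead, one instead bounds $\sum_k w_k$ by a telescoping/ratio argument using monotonicity; either way monotonicity of $f$ is exactly what delivers $\sum w_k \leq 1$.)

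Next, let $F \defineas \sum_k w_k \delta_{\lambda_k}$, a sub-probability measure on $[0,\lambda_{\max})$ with total mass $p \defineas \sum_k w_k \leq 1$; if $p < 1$ put the remaining mass $1 - p$ at the point $0$ (where $\psi(0) = 0$ and the integrand $\expebrace{0 \cdot s - 0 \cdot v} = 1$ contributes a constant $1-p \leq 1$ to $m(s,v)$, which only shrinks the sup defining the boundary relative to dropping that atom — actually it \emph{raises} $m$, so I should instead simply note that adding the atom at $0$ can only increase $m(s,v)$, hence decrease $\Mcal_\alpha$, so the boundary $\Mtil_\alpha$ built from the unnormalized sum dominates the genuine mixture boundary $\Mcal_\alpha$ of the normalized measure and is therefore a fortiori valid). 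Cleaner: directly compare. With $F' \defineas F + (1-p)\delta_0$ a genuine probability measure, $m_{F'}(s,v) = (1-p) + \sum_k w_k \expebrace{\lambda_k s - \psi(\lambda_k)v} \geq \sum_k w_k \expebrace{\lambda_k s - \psi(\lambda_k)v}$, so $\{s : m_{F'}(s,v) < l_0/\alpha\} \subseteq \{s : \sum_k w_k \expebrace{\cdots} < l_0/\alpha\}$, whence $\Mcal^{F'}_\alpha(v) \leq \Mtil_\alpha(v)$ for every $v$. Since $\Mcal^{F'}_\alpha$ is a sub-$\psi$ uniform boundary with crossing probability $\alpha$ by \cref{th:basic_mixture}, and any pointwise-larger function is also a sub-$\psi$ uniform boundary with crossing probability at most $\alpha$ (the crossing event shrinks), $\Mtil_\alpha$ is too.

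Finally I would dispatch the measurability hypothesis of \cref{th:basic_mixture}: here the mixing measure is purely atomic, so $\lambda$ takes countably many values, and the map $(\omega, \lambda) \mapsto L_t(\lambda)(\omega)$ — which in all our scalar cases is just $\expebrace{\lambda S_t - \psi(\lambda) V_t}$, and in general a countable collection of individually measurable supermartingales indexed by the $\lambda_k$ — is automatically jointly measurable on the product with the discrete $\sigma$-algebra on $\{\lambda_k\}$. So the product-measurability caveat is vacuous in the discrete case and can be noted in one line. I expect the main obstacle to be purely bookkeeping: getting the geometric indexing in \eqref{eq:discretization} aligned so that the monotonicity of $f$ cleanly yields $\sum_k w_k \leq 1$ (equivalently, confirming $f(\lambda_k\sqrt\eta) = f(\overlambda\eta^{-k})$ is the left-endpoint value on the $k$-th cell and that $w_k$ equals $f$ evaluated there times that cell's Lebesgue length), plus stating the "pointwise-larger boundary stays valid" monotonicity lemma explicitly; neither is deep, but both must be written carefully for the constants to come out as claimed.
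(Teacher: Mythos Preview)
Your approach is correct and essentially identical to the paper's: show $\sum_k w_k \leq 1$, complete to a probability measure (the paper normalizes by dividing the $w_k$ by their sum rather than adding mass at zero, but either works), apply \cref{th:basic_mixture}, and observe that $\Mtil_\alpha$ pointwise dominates the resulting mixture boundary. For the bookkeeping you flagged, the correct cells are $[\lambda_k/\sqrt{\eta},\, \lambda_k\sqrt{\eta}] = [\overlambda\eta^{-(k+1)},\, \overlambda\eta^{-k}]$ with Lebesgue width $\overlambda(\eta-1)\eta^{-(k+1)} = w_k/f(\lambda_k\sqrt{\eta})$, and $\lambda_k\sqrt{\eta} = \overlambda\eta^{-k}$ is the \emph{right} endpoint where nonincreasing $f$ attains its minimum on the cell, yielding $w_k \leq \int_{\overlambda\eta^{-(k+1)}}^{\overlambda\eta^{-k}} f$ and hence $\sum_k w_k \leq \int_0^{\overlambda} f = 1$.
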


We suppress the dependence of $\Mtil_\alpha$ on $f$, $l_0$, $\overlambda$ and
$\eta$ for notational simplicity. Though \cref{th:discrete_mixture} is a
straightforward consequence of the method of mixtures, our choice of
discretization \eqref{eq:discretization} makes it effective, broadly applicable, and easy to
implement. See \cref{sec:proof_discrete_mixture} for the proof of this
result. \Cref{fig:finite_lil_boundaries} includes an example bound,
demonstrating a slight advantage over stitching.
\cref{sec:stitching_mixture} describes a connection between the stitching and
discrete mixture methods, including a correspondence between the alpha-spending
function $h$ and the mixture density $f$. Finally, we note that the method can be
applied even when $f$ is not monotone; one must simply choose the
discretization \eqref{eq:discretization} more carefully, using known
properties of $f$.

\subsection{Inverted stitching for arbitrary boundaries}
\label{sec:inverted_stitching}

In the method of mixtures, we choose a mixing distribution $F$ and the machinery
yields a boundary $\Mcal_\alpha$. Likewise, in the stitching construction of
\cref{th:stitching}, we choose an error decay function $h$ and obtain a boundary
$\Scal_\alpha$. Here, we invert the procedure: we choose a boundary
function $g(v)$ and numerically compute an upper bound on its $S_t$-upcrossing
probability using a stitching-like construction.

\begin{theorem}\label{th:inverted_stitching}
  For any nonnegative, strictly concave
  function $g:~\R ~\to~\R$ and $v_{\max} > 1$, the function
  \begin{align}
    u(v) \defineas \begin{cases}
      g(1 \bmax v), & v \leq v_{\max}, \\
      \infty, & \text{otherwise}
    \end{cases} \label{eq:inverted_stitching_boundary}
  \end{align}
  is a sub-Gaussian uniform boundary with crossing probability at most
  \begin{align}
    l_0 \inf_{\eta > 1} \sum_{k=0}^{\ceil{\log_\eta v_{\max}}} \expebrace{
      -\frac{2(g(\eta^{k+1}) - g(\eta^k))(\eta g(\eta^k) - g(\eta^{k+1}))}
          {\eta^k(\eta - 1)^2}
    }. \label{eq:inverted_stitching_sum}
  \end{align}
\end{theorem}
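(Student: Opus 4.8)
The plan is to mimic the stitching/peeling argument underlying \cref{th:stitching}, but run it "in reverse": instead of starting from an error-decay function $h$ and producing a boundary, we start from the given concave boundary $g$ and, on each geometric epoch, bound the contribution to the crossing probability using the linear boundary of \cref{th:uniform_chernoff}. Fix $\eta > 1$. Partition the truncated intrinsic-time axis $[1, v_{\max}]$ into epochs $I_k \defineas [\eta^k, \eta^{k+1})$ for $k = 0, 1, \dots, \ceil{\log_\eta v_{\max}}$. The event $\ebrace{\exists t : V_t \leq v_{\max}, \ S_t \geq u(V_t)}$ is contained in the union over $k$ of the events $A_k \defineas \ebrace{\exists t : V_t \in I_k, \ S_t \geq g(V_t)}$, so by a union bound it suffices to bound each $\P(A_k)$ by the corresponding summand in \eqref{eq:inverted_stitching_sum}.

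On epoch $I_k$, the idea is to dominate the curved boundary $g$ from below by a line that is still above $S_t$ whenever $A_k$ occurs. Since $g$ is concave, on the interval $[\eta^k, \eta^{k+1}]$ the graph of $g$ lies above the chord connecting the endpoints $(\eta^k, g(\eta^k))$ and $(\eta^{k+1}, g(\eta^{k+1}))$; that chord is the affine function $v \mapsto a_k + b_k v$ with slope $b_k = (g(\eta^{k+1}) - g(\eta^k)) / (\eta^{k+1} - \eta^k)$ and intercept $a_k = g(\eta^k) - b_k \eta^k = (\eta g(\eta^k) - g(\eta^{k+1})) / (\eta - 1)$. Hence $g(v) \geq a_k + b_k v$ for all $v \in I_k$, and on $A_k$ we have $S_t \geq g(V_t) \geq a_k + b_k V_t$ for some $t$ with $V_t \in I_k$. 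Provided $a_k, b_k > 0$ (the concavity of $g$ together with $g \geq 0$ forces the intercept of any such chord to be positive as long as $g(\eta^{k+1}) < \eta g(\eta^k)$, which one should note is exactly what makes the exponent in \eqref{eq:inverted_stitching_sum} negative; when this fails the summand is $\geq 1$ and the bound is vacuous), \cref{th:uniform_chernoff} with $\psi = \psi_N$ gives that $v \mapsto a_k + b_k v$ is a sub-Gaussian uniform boundary. Choosing its free parameter is the key optimization: with $\psi_N(\lambda) = \lambda^2/2$, \cref{th:uniform_chernoff} says $\log(l_0/\alpha)/\lambda + (\lambda/2) v$ is a uniform boundary for any $\lambda$; matching intercept $a_k = \log(l_0/\alpha_k)/\lambda$ and slope $b_k = \lambda/2$ forces $\lambda = 2 b_k$ and then $\alpha_k = l_0 \exp\{-2 a_k b_k\}$. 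Plugging in the explicit $a_k, b_k$ and simplifying $2 a_k b_k = 2 (g(\eta^{k+1}) - g(\eta^k))(\eta g(\eta^k) - g(\eta^{k+1})) / (\eta^k(\eta-1)^2)$ yields exactly the $k$\textsuperscript{th} summand $l_0 \exp\{- 2 a_k b_k\}$.

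Summing the per-epoch bounds over $k = 0, \dots, \ceil{\log_\eta v_{\max}}$ gives a valid bound on the total crossing probability for each fixed $\eta > 1$; taking the infimum over $\eta$ then gives \eqref{eq:inverted_stitching_sum}, and since the choice of $\eta$ was free this is legitimate. The main obstacle, and the place requiring care, is the bookkeeping around degenerate epochs: one must handle the truncation at $v = 1$ (the $1 \bmax v$ in \eqref{eq:inverted_stitching_boundary}, so that the first epoch effectively starts at $1$ and $g$ is only evaluated where it is defined and nonnegative), confirm that the chord argument is applied on a closed interval whose right endpoint may exceed $v_{\max}$ (harmless, since enlarging the linear boundary's domain of validity only weakens it), and observe that nothing is lost when $a_k$ or $b_k$ is nonpositive because the corresponding exponential is then $\geq 1$ and the inequality holds trivially. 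A secondary point is that \cref{th:uniform_chernoff} as stated bounds crossings over \emph{all} $t \geq 1$, which certainly covers the sub-event $A_k$ where $V_t$ is restricted to $I_k$; so we may apply it directly without needing a localized variant. Everything else is the routine chord computation sketched above.
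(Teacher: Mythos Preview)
Your proposal is correct and follows essentially the same route as the paper: fix $\eta>1$, on each geometric epoch $[\eta^k,\eta^{k+1}]$ lower-bound the concave $g$ by its chord $a_k+b_k v$, apply \cref{th:uniform_chernoff} with $\psi=\psi_N$ (so $\lambda=2b_k$ and crossing probability $l_0 e^{-2a_k b_k}$), take a union bound over epochs and then the infimum over $\eta$. The only cosmetic difference is that the paper records a short lemma showing that nonnegativity plus strict concavity force $a_k>0$ and $b_k\geq 0$ outright, whereas you handle the signs defensively by noting the summand is $\geq 1$ otherwise; both treatments are fine, and your remarks about extending the $k=0$ chord to cover $V_t<1$ match the paper's post-proof observation.
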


The proof is in \cref{sec:proof_inverted_stitching}. For simplicity we restrict
to the sub-Gaussian case; examination of the proof will show that the method
applies in other sub-$\psi$ cases as well, since we simply apply
\cref{th:uniform_chernoff} to appropriately chosen lines, but more involved
numerical calculations will be necessary, as the closed-form
\eqref{eq:inverted_stitching_sum} no longer applies. A similar idea was
considered by \citet{darling_further_1968}, using a mixture integral
approximation instead of an epoch-based construction to derive closed-form
bounds. \Cref{th:inverted_stitching} requires numerical summation but yields
tighter bounds with fewer assumptions. As an example,
\cref{th:inverted_stitching} with $\eta = 2.99$ shows that
{\small
\begin{align}
\P\eparen{
  \exists t : 1 \leq V_t \leq 10^{20} \text{ and }
  S_t \geq 1.7 \sqrt{V_t (\log\log(e V_t) + 3.46)}
} \leq 0.025. \label{eq:lil_inverted}
\end{align}
}
This boundary is illustrated in Figure~\ref{fig:finite_lil_boundaries}.

\subsection{Tuning boundaries in practice}\label{sec:tuning}

All uniform boundaries involve a tradeoff of tightness at different intrinsic
times: making a bound tighter for some range of times requires making it looser
at other times. Roughly speaking, the choice of a uniform
  boundary involves choosing both what time the bound should be optimized for
  (e.g., should the bound be tightest around 100 observations or around 100,000
  observations?) as well as how quickly the bound degrades as we move away from
  the optimized-for time (e.g., if we optimize for 100 samples, will the bound
  be twice as wide when we reach 1,000 samples, or will it stay within a factor
  of two until we reach 1,000,000 samples?). A boundary which decays more slowly
  will necessarily not be as tight around the optimized-for time. In brief,
  linear boundaries decay the most quickly, conjugate mixture boundaries decay
  substantially more slowly, and polynomial stitched boundaries decay even more
  slowly; we feel that mixture boundaries strike a good balance in
  practice. 

  Here, we explain how to optimize uniform boundaries for a particular
  time and discuss the above tradeoff in more detail.
Let $W_{-1}(x)$ be the lower branch of the Lambert $W$ function, the most
negative real-valued solution in $z$ to $z e^z = x$.  Consider the unitless
process $S_t / \sqrt{V_t}$, and the corresponding uniform boundary
$v \mapsto u(v) / \sqrt{v}$. Since all of our uniform boundaries $u(v)$ have
positive intercept at $v = 0$, and all grow at least at the rate
$\sqrt{v \log \log v}$ as $v \to \infty$, the normalized boundary
$u(v) / \sqrt{v}$ diverges as $v \to 0$ and $v \to \infty$. For the two-sided
normal mixture \eqref{eq:two_sided_normal_mixture}, there is a unique time $m$
at which $u(v) / \sqrt{v}$ is minimized; $m$ is proportional to tuning
parameter $\rho$ as follows:

\begin{figure}
\centering
\includegraphics{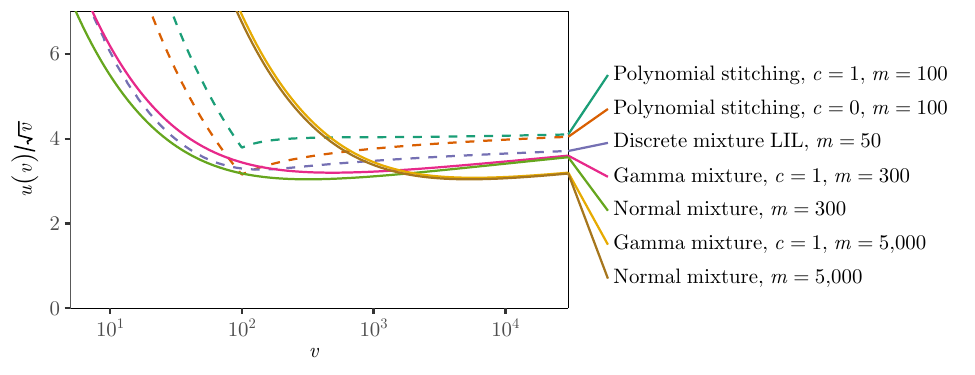}
\caption{Comparison of normalized uniform boundaries $u(v) / \sqrt{v}$ optimized
  for different intrinsic times. Normal mixture uses Appendix
  \cref{th:normal_mixture}, while gamma mixture uses
  Appendix \cref{th:gamma_mixture}. Polynomial stitched boundary is given in
  \eqref{eq:poly_stitching}, with $\eta = 2$ and $s = 1.4$. Discrete mixture
  applies \cref{th:discrete_mixture} to the density
  $f(\lambda) = 0.4 \cdot \indicator{0 \leq \lambda \leq 0.38} / [\lambda
  \log^{1.4}(0.38e / \lambda)]$ with $\eta = 1.1$, and $\lambda_{\max} = 0.38$;
  see \cref{sec:stitching_mixture} for motivation. All boundaries use
  $\alpha = 0.025$. \label{fig:normalized_boundaries}}
  
\end{figure}

\begin{proposition}\label{th:normal_mixture_rho}
  Let $u(v)$ be the two-sided normal mixture boundary~\eqref{eq:two_sided_normal_mixture} with parameter $\rho > 0$.
  \begin{enumerate}[label=(\alph*)]
  \item For fixed $\rho > 0$, the function $v \mapsto u(v) / \sqrt{v}$ is
    uniquely minimized at $v = m$ with $m$ given by
    \begin{align}
      \frac{m}{\rho} = -W_{-1}\eparen{-\frac{\alpha^2}{e l_0^2}} - 1.
      \label{eq:m_and_rho}
    \end{align}
  \item For fixed $m > 0$, the choice of $\rho$ which minimizes the boundary
    value $u(m)$ is also determined by \eqref{eq:m_and_rho}.
  \end{enumerate}
\end{proposition}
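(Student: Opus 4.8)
The plan is to collapse both parts of the proposition to a single, $\rho$-free one-dimensional minimization, exploiting the scale invariance of the normalized boundary.

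For part (a), I would first write $u(v)^2/v = \frac{v+\rho}{v}\log\frac{l_0^2(v+\rho)}{\alpha^2\rho}$ and substitute $v = x\rho$ with $x > 0$. The factors of $\rho$ cancel, leaving $u(v)^2/v = g(x)$, where $g(x) \defineas \frac{x+1}{x}\log\frac{l_0^2(x+1)}{\alpha^2}$ does not involve $\rho$. Since $v \mapsto u(v)/\sqrt v$ is positive, minimizing it is the same as minimizing $g$ over $x>0$, so the minimizing time is $m = x^\star\rho$ where $x^\star \defineas \arg\min_{x>0}g(x)$; in particular $m/\rho$ is a fixed constant, and it remains only to compute $x^\star$ and verify uniqueness. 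Differentiating gives $g'(x) = x^{-2}\bigl(x - \log\frac{l_0^2(x+1)}{\alpha^2}\bigr)$, and the bracketed function $h(x) \defineas x - \log\frac{l_0^2(x+1)}{\alpha^2}$ satisfies $h'(x) = x/(x+1) > 0$ on $(0,\infty)$, with $h(0_+) = -\log(l_0^2/\alpha^2) < 0$ (using $1 \le l_0$ and $\alpha < 1$) and $h(x)\to\infty$. Hence $h$ has a unique zero $x^\star$, at which $g$ passes from strictly decreasing to strictly increasing; together with $g(x)\to\infty$ at both endpoints this makes $x^\star$ the unique global minimizer.

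To put $x^\star$ in closed form, note $h(x^\star) = 0$ says $e^{x^\star} = l_0^2(x^\star + 1)/\alpha^2$; writing $y = x^\star + 1$ this becomes $y e^{-y} = \alpha^2/(e l_0^2)$, i.e.\ $(-y)e^{-y} = -\alpha^2/(e l_0^2)$. Since $1 \le l_0$ and $\alpha < 1$, the right side lies in $(-1/e, 0)$, so both real branches of Lambert $W$ are available; the constraint $x^\star > 0$, i.e.\ $y > 1$, i.e.\ $-y < -1$, forces the lower branch, giving $y = -W_{-1}\bigl(-\alpha^2/(e l_0^2)\bigr)$ and therefore $m/\rho = x^\star = -W_{-1}\bigl(-\alpha^2/(e l_0^2)\bigr) - 1$, which is exactly \eqref{eq:m_and_rho}.

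For part (b), I would run the same substitution with the roles of the fixed and free variables swapped: holding $m$ fixed and writing $\rho = m/x$, the factors again combine to give $u(m)^2 = m\,g(x)$ with the same function $g$. Minimizing $u(m)$ over $\rho > 0$ is thus minimizing $g$ over $x = m/\rho > 0$, whose unique minimizer we have just identified, so the optimal $\rho$ also satisfies $m/\rho = x^\star$, i.e.\ \eqref{eq:m_and_rho}. The only point requiring any care --- the main obstacle, such as it is --- is the branch selection for $W$: one must check that $-\alpha^2/(e l_0^2) \in (-1/e,0)$ so that $W_{-1}$ is real, and then use positivity of $x^\star$ to exclude the principal branch $W_0$ (which would yield $y\in(0,1)$, hence $x^\star\in(-1,0)$); everything else is elementary calculus.
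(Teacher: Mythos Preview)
Your proof is correct and essentially the same as the paper's: both reduce to setting a derivative of the squared objective to zero and solving via the Lambert $W$ function. Your presentation is slightly tidier in that the substitution $x = v/\rho$ makes the scale invariance explicit and collapses parts (a) and (b) into a single minimization from the outset, whereas the paper computes the two derivatives separately and then observes the first-order conditions coincide; you also spell out the uniqueness and branch-selection arguments that the paper leaves implicit.
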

 The above result is proved in
\cref{sec:proof_normal_mixture_rho}; it is a matter of elementary calculus, but
addresses a question that has received little attention in the literature.
\Cref{fig:normalized_boundaries} includes the normalized versions of two normal
mixture boundaries optimized for different times, $m = 300$ and $m = $
5,000. Optimizing for the range of values of $V_t$ most relevant in a particular
application will yield the tightest confidence sequences. However, as the figure
shows, one need not have a very precise range of times, so long as one uses a
conservatively low value for $m$, because $u(v) / \sqrt{v}$ grows slowly after
time $m$. Indeed, for the normal mixture boundary with $\alpha = 0.05$ and
$l_0 = 1$, we have $u(m) / \sqrt{m} \approx 3.0$ and
$u(100m) / \sqrt{100m} \approx 3.6$, so that the penalty for being off by two
orders of magnitude is modest.

The one-sided normal mixture boundary of Appendix \cref{th:normal_mixture} with
crossing probability $\alpha$ is nearly identical to the two-sided normal
mixture boundary with crossing probability $2\alpha$, so one may choose $\rho$
as in \cref{th:normal_mixture_rho} with $\alpha$ doubled. For the
gamma-exponential mixture and other non-sub-Gaussian uniform boundaries,
\cref{th:normal_mixture_rho} provides a good approximation in
practice. \Cref{fig:normalized_boundaries} includes gamma-exponential mixture
boundaries with the same $\rho$ values as each corresponding normal mixture
boundary. Though the normalized gamma-exponential mixture boundary with
$m = 300$ clearly reaches its minimum at $v > m$, this choice of $\rho$ seems
reasonable. Discrete mixtures can be similarly tuned by adjusting the
precision of the mixing distribution, but require additional considerations (\cref{sec:discrete_mix_details}).

Comparing the sub-Gaussian stitched boundary, discrete mixture boundary, and
 normal mixture boundary optimized for $m = 300$ in
\cref{fig:normalized_boundaries} illustrates another important point for
practice: although the normal mixture bound grows more quickly than the others
as $v \to \infty$, it remains smaller over about three orders of magnitude. This
makes it preferable for many real-world applications, as the longest feasible
duration of an experiment is rarely more than two orders of magnitude larger
than the earliest possible stopping time. For example, many online experiments
run for at least one week to account for weekly seasonality effects, and very
few such experiments last longer than 100 weeks. As both the normal mixture and
the discrete mixture are unimprovable in general (\cref{sec:admissibility}), the
difference is attributable to the choice of mixture, or alternatively, to the
fact that the normal mixture trades tightness around the optimized-for time in
exchange for looseness at much later times. The lesson is that the
$\Ocal(v \log \log v)$ rate, while asymptotically optimal in certain settings
and useful for theory and some applications, may not be preferable in all
real-world scenarios.

\subsection{Unimprovability of uniform boundaries}\label{sec:admissibility}

\Cref{th:uniform_boundary} of a sub-$\psi$ boundary $u$ involves only an upper
bound on the $u$-crossing probability of any sub-$\psi$ process $(S_t)$. One may
reasonably ask for corresponding lower bounds on the $u$-crossing probability to
quantify how tight this boundary is. In the ideal case, we might desire a
boundary $u$ such that the true $u$-crossing probability of some process $(S_t)$
is equal to the upper bound. In nonparametric settings, we cannot achieve this
goal for every sub-$\psi$ process. However, we might still ask that there exists
\emph{some} sub-$\psi$ process for which the true $u$-crossing probability is
arbitrarily close to the upper bound, so that the upper bound on crossing
probability is unimprovable in general. That is, we might ask that the
inequality on the supremum in \cref{th:uniform_boundary} holds with equality.

The fact we wish to point out, known in various forms, is that in the scalar,
sub-Gaussian case, exact mixture bounds are unimprovable in the above sense. It
is in this sense that the discrete mixture bound in
\cref{fig:finite_lil_boundaries} provides a lower bound, showing that the
sub-Gaussian polynomial stitched bound cannot be improved by much. The following
result shows that, for any exact, sub-Gaussian mixture boundary $\Mcal_\alpha$,
as defined in \cref{th:basic_mixture} for $\psi = \psi_N$, there exists a
sub-Gaussian process whose true $\Mcal_\alpha$-crossing probability is
arbitrarily close to $\alpha$. The result is similar to Theorem 2 of
\citet{robbins_boundary_1970}, which gives a more general invariance principle,
but requires conditions on the boundary that appear difficult to verify for
arbitrary mixture boundaries $\Mcal_\alpha$. Recall that $\mathbb{S}_{\psi_N}^1$
is the class of pairs of processes $(S_t,V_t)$ such that $(S_t)$ is
1-sub-Gaussian with variance process $(V_t)$.

\begin{proposition}\label{th:unimprovable}
  For any exact, 1-sub-Gaussian mixture boundary $\Mcal_\alpha$,
  \begin{align}
    \sup_{(S_t,V_t) \in \mathbb{S}_{\psi_N}^1}
      \P(\exists t \geq 1: S_t \geq \Mcal_\alpha(V_t))
    = \alpha.
  \end{align}
\end{proposition}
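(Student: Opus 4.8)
The plan is to exhibit, for any given $\varepsilon > 0$, a single $1$-sub-Gaussian process $(S_t, V_t)$ whose true $\Mcal_\alpha$-crossing probability exceeds $\alpha - \varepsilon$. The natural candidate is the mixture martingale itself: set $M_t \defineas \int \expebrace{\lambda S_t^0 - \psi_N(\lambda) V_t^0} \d F(\lambda)$, where $(S_t^0, V_t^0)$ is some reference $1$-sub-Gaussian process (e.g.\ a random walk with standard Gaussian increments and $V_t^0 = t$). Since $\psi_N$ gives an \emph{exact} sub-Gaussian condition, $M_t$ is a nonnegative martingale with $\E M_0 = 1$, and by Ville's inequality $\P(\exists t: M_t \geq 1/\alpha) \leq \alpha$. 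The defining property of $\Mcal_\alpha$ in \cref{th:basic_mixture} says precisely that $S_t^0 \geq \Mcal_\alpha(V_t^0)$ is equivalent (up to boundary cases of the supremum) to $m(S_t^0, V_t^0) \geq l_0/\alpha = 1/\alpha$, i.e.\ to $M_t \geq 1/\alpha$. So the crossing probability of $\Mcal_\alpha$ by this particular process equals $\P(\exists t: M_t \geq 1/\alpha)$, which we would like to push close to $\alpha$.

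The key step is therefore to show that $M_t$ actually reaches the level $1/\alpha$ with probability close to one — i.e.\ that the martingale $M_t$ is \emph{not} uniformly integrable or, more concretely, that $\sup_t M_t = \infty$ a.s.\ (or at least $\P(\sup_t M_t \geq 1/\alpha)$ can be made $\geq 1 - \varepsilon$). The clean way is to note that $M_t = \int \expebrace{\lambda S_t^0 - \lambda^2 t/2} \d F(\lambda)$ and that along a sequence of times where $S_t^0 / \sqrt{t}$ is large (which happens infinitely often by recurrence / the LIL lower bound for the Gaussian walk), the integrand, maximized over $\lambda$, blows up; a Laplace-type lower bound on the integral near the maximizing $\lambda = S_t^0 / t$ then forces $M_t \to \infty$ along that subsequence. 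So on the full-measure event $\{\sup_t V_t^0 = \infty\}$ we get $\sup_t M_t = \infty$ a.s., hence $\P(\exists t: M_t \geq 1/\alpha) = 1$ — in fact the crossing probability of \emph{this} process already equals $1 \geq \alpha$, but combined with the upper bound $\leq \alpha$ from \cref{th:basic_mixture} this is a contradiction unless we are more careful.

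The resolution — and the reason the statement is an equality and not "$=1$" — is that $\Mcal_\alpha$ as a \emph{uniform boundary} must also handle processes that stop early, and the supremum in the proposition is over all of $\mathbb{S}_{\psi_N}^1$, including processes with bounded $(V_t)$. To get exactly $\alpha$ in the limit, I would instead use a \emph{randomly stopped or rescaled} version: fix a large deterministic horizon $T$ (or a stopping rule) and consider the process run only up to the first time $M_t \geq 1/\alpha$, or dilate the increments by a small factor and let it tend to zero, arranging that $\P(\exists t: M_t \geq 1/\alpha)$ is driven by the mass $F$ places near the origin and converges up to $\alpha$ but never exceeds it. Concretely: take $S_t^{(\delta)} = \delta^{-1} S_{t}^0$ scaled so intrinsic time is inflated, or better, invoke the optional stopping identity $\E M_{\tau} = 1$ for $\tau = \inf\{t: M_t \geq 1/\alpha\} \wedge T$ and let $T \to \infty$; since $M_\tau \geq 1/\alpha$ on $\{\tau < T\}$ and $M_T \to 0$ in probability on $\{\tau = \infty\}$ (by the blow-up argument, $\{\tau = \infty\}$ has probability $0$, so actually $\P(\tau < \infty) = 1$), monotone convergence gives $\P(\exists t \leq T: M_t \geq 1/\alpha) \uparrow \P(\sup_t M_t \geq 1/\alpha)$. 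Pinning this limit to exactly $\alpha$ is the delicate part: one shows $\P(\sup_t M_t \geq 1/\alpha) \leq \alpha$ by Ville and $\geq \alpha - \varepsilon$ by choosing the reference process so that $M$ is "barely" a martingale that escapes — e.g.\ truncating $F$ to an interval $[0, \overlambda]$ and noting the crossing probability increases to $\alpha$ as $\overlambda \downarrow 0$ because the smaller the support near $0$, the closer $M$ is to the single-$\lambda$ likelihood ratio whose crossing probability is exactly $\alpha$ by the classical SPRT computation.

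\medskip
\textbf{Main obstacle.} The real work is the last paragraph: showing the crossing probability can be made \emph{arbitrarily close} to $\alpha$ — not merely $\leq \alpha$ — for mixture boundaries of the general form \eqref{eq:mixture_operator}, without the regularity hypotheses that \citet{robbins_boundary_1970} needs. I expect this requires a careful approximation argument passing from the mixture $M_t$ to a one-parameter likelihood ratio $\expebrace{\lambda_0 S_t - \lambda_0^2 t / 2}$ (whose crossing probability of level $1/\alpha$ is exactly $\alpha$, since $\{\exists t: \lambda_0 S_t - \lambda_0^2 t/2 \geq \log(1/\alpha)\}$ is a half-line-crossing event for a Gaussian random walk with negative drift, computable in closed form and equal to $\alpha$ by the reflection/SPRT identity), and controlling the discrepancy uniformly. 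Making that discrepancy vanish — by concentrating $F$ near a point, or by a change-of-measure tilting argument — while keeping $(S_t, V_t)$ genuinely $1$-sub-Gaussian, is the technical crux.
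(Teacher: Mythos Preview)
Your proposal contains a genuine error that derails the argument. You claim that for the Gaussian random walk $S_t^0$ with $V_t^0 = t$, the mixture martingale $M_t$ satisfies $\sup_t M_t = \infty$ a.s.\ via the LIL lower bound. This is false, and it directly contradicts Ville's inequality (which you yourself invoke). A nonnegative martingale with $\E M_0 = 1$ converges a.s.\ to a finite limit, so $\sup_t M_t < \infty$ a.s. The LIL heuristic fails because the Laplace approximation of the mixture integral carries a factor of order $t^{-1/2}$: roughly $M_t \approx f(S_t^0/t)\sqrt{2\pi/t}\,\exp\{(S_t^0)^2/(2t)\}$, and even at LIL times where $(S_t^0)^2/(2t) \sim \log\log t$ this gives $M_t \asymp (\log t)/\sqrt{t} \to 0$, not $\infty$. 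So your contradiction is real, but its source is the mistaken blow-up claim, not anything about stopped processes or the class $\mathbb{S}_{\psi_N}^1$.

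What is actually going on is \emph{overshoot}: for any fixed discrete-time sub-Gaussian walk, $\P(\sup_t M_t \geq 1/\alpha)$ is strictly less than $\alpha$, because $M_t$ jumps past the level $1/\alpha$ rather than hitting it. The paper's proof resolves this by passing to continuous time. It invokes the Robbins--Siegmund result that for standard Brownian motion $B$ (whose mixture process has continuous paths, so optional stopping gives exact equality), $\P(\exists t>0: B(t) \geq \Mcal_\alpha(t)) = \alpha$. Then it constructs a sequence of discrete-time sub-Gaussian processes $S_t^{(m)} = m^{-1/2}\sum_{i\leq t} X_i$, $V_t^{(m)} = t/m$, and uses Donsker's theorem (plus concavity and continuity of $\Mcal_\alpha$, established in a separate lemma) to show their finite-horizon crossing probabilities converge to those of Brownian motion. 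Letting the horizon tend to infinity gives $\liminf_m \P(\exists t: S_t^{(m)} \geq \Mcal_\alpha(V_t^{(m)})) \geq \alpha$, matching the Ville upper bound. Your brief mention of ``dilating the increments by a small factor'' is a gesture toward the right scaling, but the essential mechanism---continuous-time exactness plus an invariance principle to eliminate overshoot---is absent from your proposal.
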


We prove \cref{th:unimprovable} in \cref{sec:proof_unimprovable}. In general,
for each $\alpha$ there is an infinite variety of boundaries that are
unimprovable in the above sense, differing in when they are loose and
tight. These different boundaries will yield confidence sequences which are
loose or tight at different sample sizes, or, equivalently, are efficient for
detecting different effect sizes. Such a boundary cannot be tightened everywhere
without increasing the crossing probability.

\section{Applications}\label{sec:applications}

After presenting an empirical-Bernstein confidence sequence for bounded
observations, we apply our uniform boundaries to causal effect estimation and
matrix martingales. We also consider estimation for a general, one-parameter
exponential family.

\subsection{An empirical-Bernstein confidence sequence}

The following novel result is proved in \cref{sec:proof_empirical_variance}
using a self-normalization argument, which leads to its attractive simplicity.
Recall the estimand $\mu_t \defineas t^{-1} \sum_{i=1}^t \E_{i-1} X_i$, the
average conditional expectation.  
\begin{theorem}\label{th:empirical_variance}
  Suppose $X_t \in [a,b]$ a.s. for all $t$. Let $(\Xhat_t)$ be any
  $[a,b]$-valued predictable sequence, and let $u$ be any sub-exponential
  uniform boundary with crossing probability $\alpha$ for scale $c = b -
  a$. Then
  \begin{align}
    \P\eparen{
      \forall t \geq 1:
      \eabs{\Xbar_t - \mu_t}
      < \frac{u\eparen{\sum_{i=1}^t (X_i - \Xhat_i)^2}}{t}
    } \geq 1 - 2\alpha.
    \label{eq:empirical_bernstein}
  \end{align}
\end{theorem}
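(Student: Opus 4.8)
The plan is to reduce \cref{th:empirical_variance} to a single self-normalized supermartingale statement: the centered sum $S_t \defineas \sum_{i=1}^t (X_i - \E_{i-1}X_i)$ is $1$-sub-$\psi_{E,c}$ in the sense of \cref{th:canonical_assumption}, with scale $c = b - a$ and with the \emph{empirical} variance process $V_t \defineas \sum_{i=1}^t (X_i - \Xhat_i)^2$. Given this, \cref{th:uniform_boundary} applied to the sub-exponential (sub-$\psi_{E,c}$) boundary $u$ with scale $c$ gives at once $\P(\exists t \ge 1 : S_t \ge u(V_t)) \le \alpha$, i.e., $\P(\exists t : \Xbar_t - \mu_t \ge u(V_t)/t) \le \alpha$. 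Running the identical argument on the reflected data $X_i' \defineas a + b - X_i$ together with the predictable sequence $\Xhat_i' \defineas a + b - \Xhat_i$ (still valued in $[a,b]$, with $S_t' = -S_t$ and $V_t' = V_t$) gives the matching bound on $\P(\exists t : \mu_t - \Xbar_t \ge u(V_t)/t)$, and a union bound over the two tails yields \eqref{eq:empirical_bernstein} with $2\alpha$.

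For the sub-$\psi_{E,c}$ claim I would take $L_t(\lambda) \defineas \expebrace{\lambda S_t - \psi_{E,c}(\lambda) V_t}$ itself as the dominating process (nonnegative, equal to $1$ at $t = 0$, and bounded for each fixed $t$, hence integrable), so it suffices to show $(L_t(\lambda))$ is a supermartingale for every $\lambda \in [0, 1/c)$. Because $L_t(\lambda) = L_{t-1}(\lambda)\expebrace{\lambda(X_t - \E_{t-1}X_t) - \psi_{E,c}(\lambda)(X_t - \Xhat_t)^2}$ with $L_{t-1}(\lambda)$ nonnegative and $\Fcal_{t-1}$-measurable, this reduces to the one-step inequality $\E_{t-1}\expebrace{\lambda(X_t - \E_{t-1}X_t) - \psi_{E,c}(\lambda)(X_t - \Xhat_t)^2} \le 1$. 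Here is the self-normalization step: instead of expanding $(X_t - \Xhat_t)^2$ around $\E_{t-1}X_t$ (the comparatively messy path of earlier empirical-Bernstein proofs), apply a \emph{pointwise} bound to the observed residual $R_t \defineas X_t - \Xhat_t \in [-c,c]$ (since $X_t, \Xhat_t \in [a,b]$). The crux is the elementary inequality
\begin{equation}
e^{\lambda y - \psi_{E,c}(\lambda)\, y^2} \le 1 + \lambda y
\qquad \text{for all } y \in [-c, c] \text{ and } \lambda \in [0, 1/c).
\label{eq:eb-key}
\end{equation}
Substituting $y = R_t$, taking $\E_{t-1}$, and using that $\Xhat_t$ is predictable, the right side is $1 + \lambda(\E_{t-1}X_t - \Xhat_t) \le \expebrace{\lambda(\E_{t-1}X_t - \Xhat_t)}$; moving the $\Fcal_{t-1}$-measurable factor $\expebrace{-\lambda(\E_{t-1}X_t - \Xhat_t)}$ to the left turns $\lambda R_t$ into $\lambda(X_t - \E_{t-1}X_t)$ and produces exactly the one-step inequality. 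Morally this is free because $\E_{t-1}[(X_t - \Xhat_t)^2] = \E_{t-1}[(X_t - \E_{t-1}X_t)^2] + (\E_{t-1}X_t - \Xhat_t)^2$ already dominates the true conditional variance.

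The main obstacle is \eqref{eq:eb-key}. After rescaling to $c = 1$ it reads $e^{\lambda y - \psi_{E,1}(\lambda) y^2} \le 1 + \lambda y$ on $y \in [-1,1]$ with $\psi_{E,1}(\lambda) = -\log(1-\lambda) - \lambda$; taking logarithms and setting $\varphi(y) \defineas \lambda y - \psi_{E,1}(\lambda)y^2 - \log(1 + \lambda y)$ (well-defined since $1 + \lambda y \ge 1-\lambda > 0$), I would verify $\varphi(-1) = \varphi(0) = 0$, $\varphi'(0) = 0$, and $\varphi'(-1) \le 0$ (a short calculation reducing to $-(s-1)^2 \le 0$ with $s = 1/(1-\lambda)$), then use $\psi_{E,1}(\lambda) \ge \lambda^2/2$ to see that $\varphi''(y) = -2\psi_{E,1}(\lambda) + \lambda^2/(1+\lambda y)^2$ changes sign exactly once on $[-1,1]$, from $+$ to $-$. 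Thus $\varphi$ is convex then concave: the tangent-at-$0$ argument on the concave part forces $\varphi \le 0$ there (in particular at the changepoint), and then the secant argument on the convex part, using $\varphi(-1) = 0$ together with the just-obtained nonpositivity at the changepoint, forces $\varphi \le 0$ on the rest. (The endpoint $y = -1$ is the equality case, $e^{-\lambda - \psi_{E,1}(\lambda)} = 1 - \lambda$, which also shows the scale $b - a$ cannot be improved in general.) The remaining bookkeeping for \cref{th:canonical_assumption} ($S_0 = V_0 = 0$, $V_t \ge 0$, $\E L_0(\lambda) = 1 \le l_0$) and the reflection and union-bound steps above are routine.
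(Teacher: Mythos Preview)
Your proposal is correct and follows essentially the same self-normalization route as the paper: both reduce to the one-step supermartingale inequality via the pointwise bound $e^{\lambda\xi - \psi_{E}(\lambda)\xi^2} \le 1 + \lambda\xi$ applied to $\xi = X_t - \Xhat_t$, then absorb the predictable shift $\Xhat_t - \E_{t-1}X_t$ using $1 + x \le e^x$. The only substantive difference is that the paper cites \citet[Lemma~4.1]{fan_exponential_2015} for the key inequality (stated there for all $\xi \ge -1$, not just $\xi \in [-1,1]$), whereas you give a clean self-contained convex--concave proof; your aside that $\varphi'(-1) \le 0$ reduces to $-(s-1)^2 \le 0$ is really the statement that $\tfrac{d}{ds}\varphi'(-1) = -(s-1)^2/s^2$, but this is in any case not needed for your secant/tangent argument.
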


This is an empirical-Bernstein bound because it uses the sum of observed squared
deviations to estimate the true variance, much like a classical $t$-test. Hence
the confidence radius scales with the true standard deviation for
sufficiently large samples, regardless of the support diameter $b - a$, and with
no prior knowledge of the true variance. Note also that this bound does not
require that observations share a common mean.

The confidence statement \eqref{eq:empirical_bernstein} holds for \emph{any}
sequence of predictions $(\Xhat_i)$, but predictions closer to the conditional
expectations, $\Xhat_i \approx \E_{i-1} X_i$, will yield smaller confidence
intervals on average. A simple choice is the mean,
$\Xhat_t = (t-1)^{-1} \sum_{i=1}^{t-1} X_i$, which will be effective when the
samples are i.i.d., for example. But the predictions $(\Xhat_i)$ can also make use of trends,
seasonality, stratification or regression (in the presence of covariates),
machine learning algorithms, or any other information that
may aid with prediction.

For an explicit example, assume $X_i \in [0, 1]$ and define the empirical
variance as $\Vhat_t \defineas \sum_{i=1}^t (X_i - \Xbar_{i-1})^2$. Invoking
\cref{th:empirical_variance} with the polynomial stitched bound
\eqref{eq:poly_stitching} using $c=1$, $\eta=2$, $m=1$, and
$h(k) \propto k^{1.4}$, we have the following 95\%-confidence sequence for
$\mu_t$:

\begin{align}
\small
\smash{
\Xbar_t \pm \frac{
  1.7 \sqrt{(\Vhat_t \bmax 1) (\log \log(2(\Vhat_t \bmax 1)) + 3.8)}
  + 3.4 \log \log(2(\Vhat_t \bmax 1)) + 13
}{t}.}
\label{eq:closed_form_empirical}
\end{align}
When a closed form is not required, the gamma-exponential mixture (\cref{th:gamma_mixture}) may yield tighter bounds than stitching; simulations in \cref{sec:simulations} demonstrate the use of
\cref{th:empirical_variance} with this mixture.

\subsection{Estimating ATE in the Neyman-Rubin model}\label{sec:ate}

As one illustration of \cref{th:empirical_variance}, we consider the sequential
estimation of average treatment effect under the Neyman-Rubin potential outcomes
model \citep{neyman_application_1990, rubin_estimating_1974,
  imbens_causal_2015}. We imagine a sequence of experimental units,
each with real-valued potential outcomes under control and treatment denoted by
$\{Y_t(0),Y_t(1)\}_{t \in \N}$, respectively. These potential
outcomes are fixed, but we observe only one outcome for each unit in the
experiment. We assign a randomized treatment to each unit, denoted by the
$\brace{0,1}$-valued random variable $Z_t \in \Fcal_t$, observing
$Y_t^\obs \defineas Y_t(Z_t)$. Here treatment is assigned by flipping a coin for
each subject, with a bias possibly depending on previous observations. This
treatment assignment is the only source of randomness.  Specifically, let
$P_t \defineas E_{t-1} Z_t$ and suppose $0 < P_t < 1$ a.s. for all $t$; then we
permit $P_t$ to vary between individuals and to depend on past outcomes. This
accommodates Efron's biased coin design \cite{efron_forcing_1971} and
related covariate balancing methods.

At each step $t$, having treated and observed units $1, \dots, t$, we wish to
draw inference about the estimand
$\ATE_t \defineas t^{-1} \sum_{i=1}^t [Y_i(1) - Y_i(0)]$. In particular,
we seek a confidence sequence for $(\ATE_t)_{t=1}^\infty$.  To construct our
estimator, we may utilize any predictions $\Yhat_t(0)$ and $\Yhat_t(1)$ for each
unit's potential outcomes; these random variables must be
$\Fcal_{t-1}$-measurable, for each $t$. We then employ the inverse probability
weighting estimator
\begin{align}
X_t &\defineas
  \Yhat_t(1) - \Yhat_t(0)
  + \pfrac{Z_t - P_t}{P_t(1 - P_t)} (Y_t^\obs - \Yhat_t(Z_t)),
  \label{eq:horvitz}
\end{align}
which is (conditionally) unbiased for the individual treatment effect
$Y_t(1) - Y_t(0)$. As with \cref{th:empirical_variance}, better predictions will
lead to shorter confidence intervals, but the coverage guarantee holds for any
choice of predictions, and a reasonable choice would be the average of
past observed outcomes. See
\citet{aronow_class_2013} for a similar strategy for fixed-sample
estimation.

We assume bounded potential outcomes; for simplicity we assume
$Y_t(k) \in [0, 1]$ for all $t \geq 1, k = 0, 1$, and we assume predictions are
likewise bounded. We further assume that treatment probabilities are uniformly
bounded away from zero and one. Then, an empirical-Bernstein confidence sequence
for $\ATE_t$ follows from \cref{th:empirical_variance}, where we use
$\Xhat_t = \Yhat_t(1) - \Yhat_t(0)$ so that
\begin{align}
  V_t ~\defineas~ \sum_{i=1}^t (X_i - \Xhat_i)^2
  ~=~ \sum_{i=1}^t \pfrac{Z_i - P_i}{P_i(1 - P_i)}^2 (Y_i^\obs - \Yhat_i(Z_i))^2.
\end{align}

\begin{corollary}\label{th:ate_bernoulli}
  Suppose $P_t \in [p_{\min}, 1 - p_{\min}]$ a.s., $Y_t(k) \in [0, 1]$ and
  $\Yhat_t(k) \in [0, 1]$ for all $t \geq 1, k = 0, 1$. Let $u$ be any
  sub-exponential uniform boundary with scale $2 / p_{\min}$ and crossing
  probability $\alpha$. Then
  \begin{align}
    \P\eparen{
      \forall t \geq 1:
      \eabs{\Xbar_t - \ATE_t}
      < \frac{u(V_t)}{t}
    } \geq 1 - 2\alpha.
  \end{align}
\end{corollary}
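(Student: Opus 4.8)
\emph{Proof proposal.} The plan is to recognize \cref{th:ate_bernoulli} as a direct application of \cref{th:empirical_variance} with the predictable sequence $\Xhat_t \defineas \Yhat_t(1) - \Yhat_t(0)$, so that all the real work lies in checking that sentence's three hypotheses: that $X_t$ and $\Xhat_t$ both lie in a common interval $[a,b]$ of width exactly $2/p_{\min}$, that $\Xhat_t$ is predictable, and that the estimand $\mu_t \defineas t^{-1}\sum_{i=1}^t \E_{i-1} X_i$ coincides with $\ATE_t$. Predictability of $\Xhat_t$ is immediate from the assumed $\Fcal_{t-1}$-measurability of the predictions, and $\Xhat_t \in [-1,1] \subseteq [-1/p_{\min}, 1/p_{\min}]$ since nonemptiness of $[p_{\min}, 1-p_{\min}]$ forces $p_{\min} \leq 1/2$.

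Next I would verify conditional unbiasedness. Writing $Y_i^\obs = Y_i(Z_i) = Z_i Y_i(1) + (1-Z_i) Y_i(0)$ and $\Yhat_i(Z_i) = Z_i \Yhat_i(1) + (1-Z_i)\Yhat_i(0)$, and splitting the expectation on $Z_i \in \{0,1\}$ using $\E_{i-1} Z_i = P_i$, a short computation gives $\E_{i-1}[\frac{Z_i - P_i}{P_i(1-P_i)}(Y_i^\obs - \Yhat_i(Z_i))] = (Y_i(1) - \Yhat_i(1)) - (Y_i(0) - \Yhat_i(0))$, hence $\E_{i-1} X_i = Y_i(1) - Y_i(0)$ and therefore $\mu_t = \ATE_t$.

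The crux is boundedness of $X_t$ with the sharp constant: the naive triangle-inequality bound on \eqref{eq:horvitz} yields an interval of width about $2 + 2/p_{\min}$, which would force scale $c = 2 + 2/p_{\min}$ rather than $2/p_{\min}$. Instead I would substitute $Z_t = 1$ and $Z_t = 0$ separately into \eqref{eq:horvitz} and collect terms. When $Z_t = 1$, $X_t = Y_t(1)/P_t - \Yhat_t(1)(1-P_t)/P_t - \Yhat_t(0)$, whose range over $Y_t(1),\Yhat_t(1),\Yhat_t(0) \in [0,1]$ is exactly $[-1/P_t,\, 1/P_t]$, the lower endpoint $-(1-P_t)/P_t - 1$ simplifying to $-1/P_t$; symmetrically, when $Z_t = 0$, $X_t = \Yhat_t(1) + \Yhat_t(0)P_t/(1-P_t) - Y_t(0)/(1-P_t) \in [-1/(1-P_t),\, 1/(1-P_t)]$. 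Thus $X_t \in [-1/\min\{P_t, 1-P_t\},\, 1/\min\{P_t, 1-P_t\}] \subseteq [-1/p_{\min},\, 1/p_{\min}]$ a.s., an interval of width $b - a = 2/p_{\min}$. The main obstacle is simply spotting this cancellation; the rest is bookkeeping.

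With the hypotheses in hand, \cref{th:empirical_variance} applied with $a = -1/p_{\min}$, $b = 1/p_{\min}$, and the given sub-exponential boundary $u$ of scale $c = b - a = 2/p_{\min}$ yields $\P(\forall t \geq 1 : |\Xbar_t - \ATE_t| < u(\sum_{i=1}^t (X_i - \Xhat_i)^2)/t) \geq 1 - 2\alpha$; and $\sum_{i=1}^t (X_i - \Xhat_i)^2 = V_t$ by the definitions of $\Xhat_t$ and $X_t$, which completes the argument.
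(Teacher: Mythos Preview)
Your proposal is correct and follows precisely the route the paper intends: the corollary is stated immediately after noting that it ``follows from \cref{th:empirical_variance}'' with $\Xhat_t = \Yhat_t(1) - \Yhat_t(0)$, and you have filled in exactly the details left implicit there. In particular, your case-split on $Z_t \in \{0,1\}$ to obtain the sharp range $X_t \in [-1/p_{\min}, 1/p_{\min}]$ (rather than the looser $2 + 2/p_{\min}$ from the triangle inequality) is the key verification the paper does not spell out, and your computation of it is accurate.
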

\begin{figure}
\centering
\includegraphics{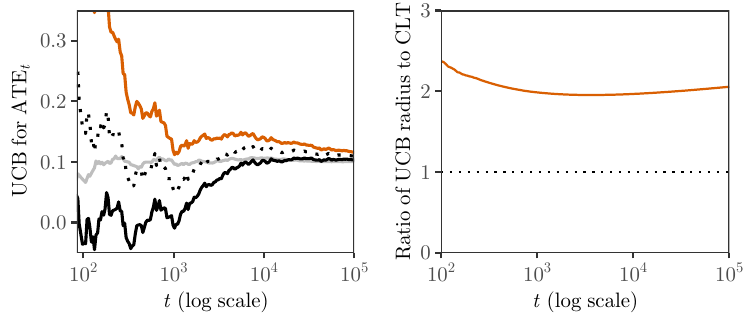}
\caption{
Upper half of 95\% empirical-Bernstein confidence sequence for
  $\ATE_t$ under Bernoulli randomization based on one simulated sequence of i.i.d.
  observations, $P_t \equiv 0.5$, $Y_i(0) \sim \text{Ber}(0.5)$,
  $Y_i(1) = \xi_i \bmax Y_i(0)$ where $\xi_i \sim \text{Ber}(0.2)$. Grey line
  shows estimand $\ATE_t$. Dotted line shows fixed-sample confidence bounds
  based on difference-in-means estimator and normal approximation; these bounds
  fail to cover the true $\ATE_t$ at many times. Our bound uses
  $\Yhat_t(k) = \sum_{i=1}^{t-1} Y_i^\obs \indicator{Z_i = k} / \sum_{i=1}^{t-1}
  \indicator{Z_i = k}$, $\alpha = 0.05$ and a gamma-exponential mixture bound
  with $\rho=12.6$, chosen to optimize for intrinsic time $V_t = 100$.
  \label{fig:bernoulli_ate}} 
\end{figure}

For $u$, one may choose the gamma-exponential mixture boundary
(\cref{th:gamma_mixture}) or the stitched boundary
\eqref{eq:poly_stitching} with $c~=~\frac{2}{p_{\min}}$. Figure~\ref{fig:bernoulli_ate}
illustrates our strategy on simulated data. Over the range $t = 100$ to
$t = $100,000 displayed, our bound is about twice as wide as the fixed-sample
CLT bound, with the ratio growing at a slow $\Ocal(\sqrt{\log t})$ rate
thereafter. Of course the fixed-sample CLT bound provides no uniform coverage
guarantee.

\subsection{Matrix iterated logarithm bounds}\label{sec:matrix_application}

Our second application is the construction of iterated logarithm bounds for
random matrix sums and their use in sequential covariance matrix estimation. The
curved uniform bounds given in \cref{sec:main_results} may be applied to matrix
martingales by taking $(S_t)$ to be the maximum eigenvalue process of the
martingale and $(V_t)$ the maximum eigenvalue of the corresponding matrix
variance process. \Citet[Section 2]{howard_exponential_2018} give sufficient
conditions for \cref{th:canonical_assumption} to hold in this matrix case. Then
\Cref{th:stitching} yields a novel matrix finite LIL; here we give an example
for bounded increments. We denote the space of symmetric, real-valued,
$d \times d$ matrices by $\mathbb{S}^d$; $\gamma_{\max}(\cdot)$ denotes the
maximum eigenvalue;
$ \ell_{\eta,s}(v) = s \log\log(\eta v / m) + \log \frac{d\, \zeta(s)}{\alpha
  \log^s \eta}$; and $k_1(\eta), k_2(\eta)$ are defined in
\eqref{eq:stitching_operator}.

\begin{corollary}\label{th:matrix_finite_lil}
  Suppose $(Y_t)_{t=1}^\infty$ is a $\mathbb{S}^d$-valued matrix martingale such
  that $\smash{\gamma_{\max}(Y_t - Y_{t-1}) \leq b}$ a.s. for all $t$. Let
  $V_t \defineas \gamma_{\max}(\sum_{i=1}^t \E_{t-1} (Y_t - Y_{t-1})^2)$ and $S_t \defineas \gamma_{\max}(Y_t)$.  Then
  for any $\smash{\eta >1, s>1, m>0,\alpha\in(0,1)}$, we have
\begin{equation}
\small
\P\eparen{
  \exists t \geq 1 :
  S_t \geq
  k_1(\eta) \sqrt{(V_t \bmax m) \ell_{\eta,s}(V_t \bmax m)}
  + \frac{b k_2(\eta)}{3} \ell_{\eta,s}(V_t \bmax m)
} \leq \alpha. \label{eq:matrix_finite_lil}
\end{equation}
\end{corollary}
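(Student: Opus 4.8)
The plan is to deduce the corollary from the stitched boundary of \cref{th:stitching} in three short steps: establish a matrix sub-$\psi$ condition for the maximum-eigenvalue process, convert it to a sub-gamma condition, and then read off the polynomial stitched boundary \eqref{eq:poly_stitching}. For Step 1, note that since $(Y_t)$ is a martingale its increments $Y_t - Y_{t-1}$ are conditionally mean-zero, and the one-sided eigenvalue bound $\gamma_{\max}(Y_t - Y_{t-1}) \leq b$ a.s.\ is exactly the hypothesis under which the matrix sufficient conditions of \citet{howard_exponential_2018} (Appendix \cref{tab:matrix_suff_cond}) apply. I would invoke that result to conclude that $S_t = \gamma_{\max}(Y_t)$ is $d$-sub-$\psi_{P,b}$ (the matrix Bennett condition) with variance process $V_t = \gamma_{\max}(\sum_{i=1}^t \E_{i-1}(Y_i - Y_{i-1})^2)$; here the prefactor is $l_0 = d$ because passing from the trace-exponential supermartingale to a scalar bound on $\gamma_{\max}(Y_t)$ costs a dimensional factor. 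The scalar content is the Bennett MGF inequality $\E e^{\lambda X} \leq \exp(\E[X^2]\,\psi_{P,b}(\lambda))$ for mean-zero $X \leq b$ and $\lambda \geq 0$, whose matrix analogue rests on Lieb concavity; I would cite this rather than reprove it.

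For Step 2, by \cref{th:psi_boundary_relations} --- the implication (reflected by the arrow between the sub-gamma and sub-Poisson nodes in \cref{fig:psi_boundary_relations}) that any sub-$\psi_{P,c}$ process is also sub-$\psi_{G,c/3}$ --- the pair $(S_t, V_t)$ is $d$-sub-gamma with scale $c = b/3$ and the same variance process. This is precisely the step that produces the factor $1/3$ multiplying $b\,k_2(\eta)$ in \eqref{eq:matrix_finite_lil}; it rests on the elementary bound $e^u - u - 1 \leq u^2/(2(1 - u/3))$ on $[0,3)$. (If \cref{tab:matrix_suff_cond} already records the bounded-increment condition directly in sub-gamma form with scale $b/3$, this step is vacuous.) This kind of process-level deduction via \cref{th:psi_boundary_relations} is the same move used in \cref{ex:variance}.

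For Step 3, apply \cref{th:stitching} to the $d$-sub-$\psi_{G,b/3}$ process $(S_t, V_t)$ with the choice $h(k) = (k+1)^s \zeta(s)$, which is increasing and satisfies $\sum_{k \geq 0} 1/h(k) = \zeta(s)^{-1}\sum_{j\geq 1} j^{-s} = 1$, meeting the summability requirement, and with $l_0 = d$ and the given $m, \eta$. Substituting into \eqref{eq:poly_stitching} --- now with $l_0 = d$, so the additive term becomes $\log\frac{d\,\zeta(s)}{\alpha\log^s\eta}$ and the bracketed expression is exactly $\ell_{\eta,s}(v)$ --- and using $c = b/3$ gives $\Scal_\alpha(v) = k_1(\eta)\sqrt{v\,\ell_{\eta,s}(v)} + \tfrac{b}{3}k_2(\eta)\,\ell_{\eta,s}(v)$, and $v \mapsto \Scal_\alpha(v \bmax m)$ is a sub-gamma uniform boundary with crossing probability $\alpha$. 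Unwinding \cref{th:uniform_boundary} for this boundary applied to $(S_t, V_t)$ is exactly the statement \eqref{eq:matrix_finite_lil}.

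The main obstacle is Step 1: everything after it is bookkeeping with constants already fixed in \eqref{eq:stitching_operator}--\eqref{eq:poly_stitching}. The real content is confirming that a one-sided increment bound $\gamma_{\max}(Y_t - Y_{t-1}) \leq b$ (rather than a two-sided operator-norm bound) suffices to place $\gamma_{\max}(Y_t)$ into the sub-$\psi$ framework with variance process given by $\gamma_{\max}$ of the cumulative conditional second-moment matrix and with prefactor $l_0 = d$. This is exactly what the matrix theory of \citet{howard_exponential_2018} supplies, so in the write-up it reduces to citing the appropriate row of their table.
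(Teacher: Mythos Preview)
Your proposal is correct and follows essentially the same route as the paper's own proof, which is the one-sentence justification immediately following the corollary: cite Fact~1(c) and Lemma~2 of \citet{howard_exponential_2018} to conclude that $(S_t)$ is sub-gamma with variance process $(V_t)$, scale $c=b/3$, and $l_0=d$, then apply the polynomial stitched boundary. Your Step~1--Step~2 decomposition (first sub-Poisson via the matrix Bennett condition, then sub-gamma via the $\psi_{P,b}\leq\psi_{G,b/3}$ inequality) just unpacks what the paper's cited facts bundle together; you even anticipate this in your parenthetical remark about Step~2 being possibly vacuous.
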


The result follows using the polynomial stitched boundary after invoking Fact 1(c)
and Lemma 2 of \citet{howard_exponential_2018}
(cf. \citep{tropp_freedmans_2011}), which show that $(S_t)$ is sub-gamma with
variance process $(V_t)$, scale $c = b/3$, and $l_0 = d$. 
Beyond bounded increments, the same bound holds for any sub-gamma
process. As evidenced by \Cref{th:universal}, this is a very general condition.

Taking $\eta$ and $s$ arbitrarily close to one and using the final result of
\cref{th:stitching}, we obtain the following asymptotic matrix upper LIL, proved in \cref{sec:proof_matrix_asymptotic_lil}. Here
we denote the martingale increments by $\Delta Y_t \defineas Y_t - Y_{t-1}$.

\begin{corollary}\label{th:matrix_asymptotic_lil}
  Let $(Y_t)_{t=1}^\infty$ be a $\mathbb{S}^d$-valued, square-integrable
  martingale, and define
  $V_t = \gamma_{\max}\eparen{\sum_{i=1}^t \E_{i-1} \Delta Y_t^2}$. Then
\begin{align}
  \limsup_{t \to \infty}
  \frac{\gamma_{\max}\eparen{Y_t}}
       {\sqrt{2 V_t \log \log V_t}}
  \leq 1
  \quad \text{a.s. on } \ebrace{\sup_t V_t = \infty}
  \label{eq:matrix_lil}
\end{align}
whenever either (1) the increments $(\Delta Y_t)$ are i.i.d., or (2) the
increments $(\Delta Y_t)$ satisfy a Bernstein condition on higher moments: for
some $c > 0$, for all $t$ and all $k > 2$,
$\E_{t-1} (\Delta Y_t)^k \preceq (k!/2) c^{k-2} \E_{t-1} \Delta Y_t^2$.
\end{corollary}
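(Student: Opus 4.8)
The plan is to obtain \eqref{eq:matrix_lil} in each case from results already in hand: case~(2) is an immediate consequence of the sub-$\psi$ framework, while case~(1), which admits no sub-$\psi$ description, is reduced to a family of scalar sums together with the classical Hartman--Wintner law of the iterated logarithm.

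\textbf{Case~(2).} The moment hypothesis $\E_{t-1}(\Delta Y_t)^k \preceq (k!/2)\,c^{k-2}\,\E_{t-1}\Delta Y_t^2$ is precisely the matrix Bernstein condition under which the sufficient conditions of \citet{howard_exponential_2018} --- the Bernstein-increment analogue of the bounded-increment statements invoked for \cref{th:matrix_finite_lil} --- give that $S_t := \gamma_{\max}(Y_t)$ is sub-$\psi_{G,c}$ with variance process $V_t = \gamma_{\max}\big(\sum_{i \leq t} \E_{i-1}\Delta Y_i^2\big)$ and $l_0 = d$. Since $\psi_{G,c}(\lambda) = \frac{\lambda^2}{2(1-c\lambda)} \sim \frac{\lambda^2}{2}$ as $\lambda \downarrow 0$, \cref{th:asymptotic_lil} applies verbatim with this $S_t$ and $V_t$ --- its conclusion involves no $l_0$ --- and yields \eqref{eq:matrix_lil}.

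\textbf{Case~(1).} Square-integrability alone gives no moment generating function near the origin, so I reduce to the scalar LIL instead. Because the increments are i.i.d., $\E_{i-1}\Delta Y_i^2 = \E\Delta Y_1^2$ is deterministic, so $V_t = t v$ with $v := \gamma_{\max}(\E\Delta Y_1^2)$; if $v = 0$ then $Y_t \equiv 0$ and the statement is vacuous, so assume $v > 0$. For each unit vector $u$, the scalar process $u^\top Y_t u = \sum_{i \leq t} u^\top \Delta Y_i u$ is a sum of i.i.d.\ mean-zero variables with variance $\sigma_u^2 = \E(u^\top \Delta Y_1 u)^2 \leq \E\|\Delta Y_1 u\|^2 = u^\top(\E\Delta Y_1^2) u \leq v$ by Cauchy--Schwarz, so $\limsup_t |u^\top Y_t u|/\sqrt{2 t v \log\log t} \leq 1$ a.s.\ \citep{stout_hartman-wintner_1970} (trivially if $\sigma_u^2 = 0$). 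Fixing $\epsilon \in (0, 1/2)$ and a finite $\epsilon$-net $N_\epsilon$ of the unit sphere, a union bound over $N_\epsilon$ yields a.s.\ both $\limsup_t \max_{u \in N_\epsilon} u^\top Y_t u/\sqrt{2 t v \log\log t} \leq 1$ and, via the standard net estimate $\|Y_t\| \leq (1-2\epsilon)^{-1} \max_{u \in N_\epsilon} |u^\top Y_t u|$ for the operator norm, $\limsup_t \|Y_t\|/\sqrt{2 t v \log\log t} \leq (1-2\epsilon)^{-1}$.

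To return to $\gamma_{\max}$, let $v_1(t)$ be a leading unit eigenvector of $Y_t$ and choose $u(t) \in N_\epsilon$ with $u(t) = \cos\theta_t\, v_1(t) + \sin\theta_t\, w_t$, where $w_t \perp v_1(t)$, $\|w_t\| = 1$, and $\sin\theta_t \leq \epsilon$. Since $Y_t v_1(t) \perp w_t$, the cross term vanishes: $u(t)^\top Y_t u(t) = \cos^2\theta_t\, \gamma_{\max}(Y_t) + \sin^2\theta_t\, w_t^\top Y_t w_t \geq (1-\epsilon^2)\gamma_{\max}(Y_t) - \epsilon^2 \|Y_t\|$ whenever $\gamma_{\max}(Y_t) \geq 0$ (the other $t$ cannot drive the $\limsup$ upward). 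Rearranging, dividing by $\sqrt{2 t v \log\log t}$, taking $\limsup$, and inserting the two bounds above gives $\limsup_t \gamma_{\max}(Y_t)/\sqrt{2 t v \log\log t} \leq (1+\epsilon^2)/(1-\epsilon^2)$ a.s.; letting $\epsilon \downarrow 0$ along a sequence and recalling $V_t = tv$ (so $\log\log V_t \sim \log\log t$) yields \eqref{eq:matrix_lil}. The real obstacle lies here: $\gamma_{\max}$ is one-sided, unlike the operator norm, which forces the two-step detour --- first an operator-norm LIL bound via the net, then the vanishing-cross-term eigenvector decomposition to recover $\gamma_{\max}$ --- rather than a one-shot covering argument. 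One may instead truncate the increments at a fixed level, apply \cref{th:matrix_finite_lil} with $\eta,s \to 1$ to the centered bounded part, and control the complementary small-variance i.i.d.\ part before letting the level grow, but this is no simpler.
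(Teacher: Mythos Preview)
Your treatment of case~(2) matches the paper's exactly: the matrix Bernstein condition puts $(\gamma_{\max}(Y_t),V_t)$ into $\subpsiclass$ with $\psi=\psi_{G,c}$, and \cref{th:asymptotic_lil} finishes.

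For case~(1), however, your premise is mistaken. You assert that the i.i.d.\ case ``admits no sub-$\psi$ description'' under mere square-integrability, and this is false. The paper handles case~(1) entirely within the sub-$\psi$ framework, invoking the matrix self-normalized condition (Lemma~3(f) and Lemma~2 of \citet{howard_exponential_2018}, cf.\ \citet{delyon_exponential_2009}; see also the ``Self-normalized I'' row of \cref{tab:matrix_suff_cond}) to obtain that $S_t=\gamma_{\max}(Y_t)$ is sub-Gaussian with variance process
$\widetilde V_t=\gamma_{\max}\bigl(\tfrac13\sum_{i\le t}(\Delta Y_i^2+2\E\Delta Y_i^2)\bigr)$,
which requires only second moments. \Cref{th:asymptotic_lil} then gives the LIL with $\widetilde V_t$ in place of $V_t$, and the elementwise strong law plus continuity of $\gamma_{\max}$ yield $\widetilde V_t/V_t\to1$ a.s.\ in the i.i.d.\ case, which transfers the conclusion to $V_t$.

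That said, your alternative route for case~(1) is correct. Reducing to the scalar Hartman--Wintner LIL along each direction $u$, passing to a finite net, and recovering $\gamma_{\max}$ from the net via the vanishing-cross-term eigenvector decomposition all go through; the only blemish is the stated constant $(1+\epsilon^2)/(1-\epsilon^2)$, which should be $\bigl(1+\epsilon^2/(1-2\epsilon)\bigr)/(1-\epsilon^2)$ once both of your bounds are actually inserted---harmless, since either expression tends to $1$ as $\epsilon\downarrow0$. The trade-off: the paper's argument is shorter and stays inside the machinery developed in the paper, at the cost of importing the Delyon-type self-normalized supermartingale; yours is more self-contained, relying only on the classical scalar LIL and a standard covering argument, but is noticeably longer and needs the two-step detour you describe to handle the one-sidedness of $\gamma_{\max}$.
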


The
Bernstein condition holds if the increments are uniformly bounded,
$\gamma_{\max}(\Delta Y_t) \leq c$ for some $c > 0$. Also, in the i.i.d.\ case,
$\P(V_t \to \infty) = 1$ and then \eqref{eq:matrix_lil} states that
$\smash{\limsup_{t \to \infty} \gamma_{\max}\eparen{Y_t} / \sqrt{2 \gamma_{\max}(\E
  \Delta Y_1^2) t \log \log t} \leq 1}$, a.s. on $\brace{\sup_t V_t = \infty}$.
 When $d = 1$, this recovers the classical upper LIL,
  showing that \cref{th:matrix_asymptotic_lil} cannot be improved uniformly, but
  we are not aware of an appropriate lower bound for the general matrix case.

We now consider the nonasymptotic sequential estimation of a covariance matrix
based on bounded vector observations \citep{rudelson_random_1999,
  vershynin_introduction_2012, gittens_tail_2011, tropp_introduction_2015,
  koltchinskii_concentration_2017}. In particular, we observe a sequence of
independent, mean zero, $\R^d$-valued random vectors $x_t$ with common
covariance matrix $\Sigma = \E x_t x_t^T$. We wish to estimate $\Sigma$ using an
operator-norm confidence ball centered at the empirical covariance matrix
$\Sigmahat_t \defineas t^{-1} \sum_{i=1}^t x_i x_i^T$. For fixed-sample
estimation, when $\norm{x_i}_2 \leq \sqrt{b}$ a.s. for all $i \in [t]$, the
analysis of \citet[section 1.6.3]{tropp_introduction_2015} implies
\begin{align}
{\small
\P\eparen{
  \opnorm{\Sigmahat_t - \Sigma} \geq
  \sqrt{\frac{2 b \opnorm{\Sigma} \log(2d/\alpha)}{t}}
  + \frac{4 b \log(2d/\alpha)}{3t}
} \leq \alpha.}
\label{eq:fixed_sample_covariance}
\end{align}

We use a sub-Poisson uniform boundary to obtain a uniform analogue:

\begin{corollary}\label{th:empirical_covariance}
  Let $(x_t)_{t=1}^\infty$ be a sequence of $\R^d$-valued, independent
  random vectors with $\E x_t = 0$, $\norm{x_t}_2 \leq \sqrt{b}$ a.s. and
  $\E x_t x_t^T = \Sigma$ for all $t$. If $u$~is a sub-Poisson uniform boundary
  with crossing probability $\alpha$ and scale $2b$, then
\begin{align}
  \P\eparen{
    \exists t \geq 1 : \opnorm{\Sigmahat_t - \Sigma} \geq
    \frac{1}{t} u\eparen{b t \opnorm{\Sigma}}
  } \leq \alpha.
\end{align}
\end{corollary}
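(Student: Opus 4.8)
The plan is to express the operator-norm deviation as the maximum eigenvalue of a matrix martingale, bound its increments and variance process, and then combine the matrix sub-Poisson sufficient condition from \citet{howard_exponential_2018} with \cref{th:uniform_boundary}. First I would set $Y_t \defineas \sum_{i=1}^t (x_i x_i^T - \Sigma) = t(\Sigmahat_t - \Sigma)$; since the $x_i$ are independent with $\E x_i x_i^T = \Sigma$, this is an $\mathbb{S}^d$-valued matrix martingale w.r.t.\ the filtration generated by $(x_t)$. To reduce the operator norm to a maximum eigenvalue I would pass to the self-adjoint dilation $\tilde Y_t \defineas \mathcal{H}(Y_t)$ with $\mathcal{H}(A) \defineas \bigl(\begin{smallmatrix} 0 & A \\ A & 0\end{smallmatrix}\bigr)$; by linearity $(\tilde Y_t)$ is an $\mathbb{S}^{2d}$-valued matrix martingale, and the standard dilation identity gives $\gamma_{\max}(\tilde Y_t) = \opnorm{Y_t} = t\opnorm{\Sigmahat_t - \Sigma}$.

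Next I would record the two estimates needed to invoke the matrix sub-Poisson condition. The increments satisfy $\Delta\tilde Y_i = \mathcal{H}(x_i x_i^T - \Sigma) \preceq 2b\, I_{2d}$: indeed $\gamma_{\max}(\mathcal{H}(x_i x_i^T - \Sigma)) = \opnorm{x_i x_i^T - \Sigma} \le \opnorm{x_i x_i^T} + \opnorm{\Sigma} = \norm{x_i}_2^2 + \gamma_{\max}(\Sigma) \le 2b$, using $\norm{x_i}_2^2 \le b$ and $\gamma_{\max}(\Sigma) = \sup_{\norm{u}_2 = 1} \E\langle x_i, u\rangle^2 \le \E\norm{x_i}_2^2 \le b$. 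For the variance process, $\mathcal{H}(x_i x_i^T - \Sigma)^2$ is block-diagonal with both blocks equal to $(x_i x_i^T - \Sigma)^2$, and $\E (x_i x_i^T - \Sigma)^2 = \E[\norm{x_i}_2^2 x_i x_i^T] - \Sigma^2 \preceq b\Sigma - \Sigma^2 \preceq b\Sigma$, so $\gamma_{\max}\bigl(\sum_{i=1}^t \E_{i-1} \Delta\tilde Y_i^2\bigr) \le tb\, \gamma_{\max}(\Sigma) = tb\opnorm{\Sigma}$.

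Now the matrix sub-Poisson sufficient condition of \citet[Section 2]{howard_exponential_2018} shows $(\gamma_{\max}(\tilde Y_t))$ is $2d$-sub-$\psi_{P,2b}$ with variance process $\bigl(\gamma_{\max}(\sum_{i=1}^t \E_{i-1} \Delta\tilde Y_i^2)\bigr)$; since $\psi_{P,2b} \ge 0$ on $[0,\infty)$, replacing this variance process by the larger deterministic sequence $(tb\opnorm{\Sigma})$ preserves the defining supermartingale inequality of \cref{th:canonical_assumption}, so $(\gamma_{\max}(\tilde Y_t), (tb\opnorm{\Sigma})) \in \mathbb{S}^{2d}_{\psi_{P,2b}}$. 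Applying \cref{th:uniform_boundary} to $u$, a sub-$\psi_{P,2b}$ uniform boundary with $l_0 = 2d$ and crossing probability $\alpha$, then yields $\P(\exists t \ge 1: \gamma_{\max}(\tilde Y_t) \ge u(tb\opnorm{\Sigma})) \le \alpha$, which becomes the claimed inequality after substituting $\gamma_{\max}(\tilde Y_t) = t\opnorm{\Sigmahat_t - \Sigma}$ and dividing by $t$. The main obstacle is this last step: locating the matrix sub-Poisson sufficient condition in \citet{howard_exponential_2018}, keeping track of the dimension factor $l_0 = 2d$ introduced by the dilation, and verifying that the crude bound $\E(x_i x_i^T - \Sigma)^2 \preceq b\Sigma$ is precisely what produces the argument $bt\opnorm{\Sigma}$ of the boundary $u$.
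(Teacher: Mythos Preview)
Your proof is correct and follows essentially the same route as the paper: both define the matrix martingale $Y_t = \sum_{i \leq t}(x_i x_i^T - \Sigma)$, bound the increments in operator norm by $2b$, bound the variance process by $tb\opnorm{\Sigma}$, and invoke the matrix sub-Poisson condition (Fact~1(c) and Lemma~2 of \citet{howard_exponential_2018}). The one point on which you are more careful than the paper's written proof is the passage from $\gamma_{\max}(Y_t)$ to $\opnorm{Y_t}$: you handle this explicitly via the self-adjoint dilation, picking up $l_0 = 2d$, whereas the paper's proof bounds only $\gamma_{\max}(Y_t)$ and leaves the two-sided step implicit. Your semidefinite-order variance bound $\E(x_i x_i^T - \Sigma)^2 \preceq b\Sigma$ is also slightly cleaner than the paper's eigenvalue-sum argument, though both arrive at the same $tb\opnorm{\Sigma}$.
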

 For example, using the polynomial stitched bound with scale
$c = 2b/3$ and $m = b \opnorm{\Sigma}$, \cref{th:empirical_covariance} gives a
$(1-\alpha)$-confidence sequence for $\Sigma$ with operator norm radius
$\Ocal(\sqrt{t^{-1} \log \log t})$. This bound has the closed form
\begin{equation}
{\small \P\eparen{
  \exists t \geq 1 :
  \opnorm{\Sigmahat_t - \Sigma} \geq
  k_1 \sqrt{\frac{b \opnorm{\Sigma} \ell(t)}{t}}
  + \frac{4b k_2 \ell(t)}{3t}
} \leq \alpha,} \label{eq:covariance_finite_lil}
\end{equation}
where
$\ell(t) = s \log\log(\eta t) + \log \frac{d\, \zeta(s)}{\alpha \log^s \eta}$,
and $k_1,k_2$ are defined in \eqref{eq:stitching_operator}.

\begin{figure}
\centering
\includegraphics{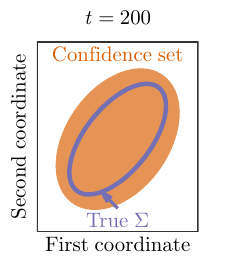}
\includegraphics{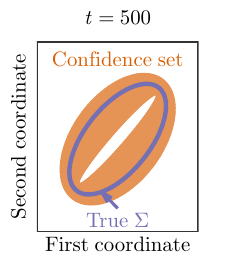}
\includegraphics{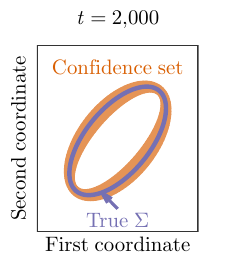}
\caption[Illustration of covariance matrix confidence sequence]{
  The matrix confidence sequence of
  \cref{th:empirical_covariance} based on one simulated sequence. Observations
  are drawn i.i.d.\ taking values $\pm (\sqrt{2} \enskip \sqrt{2})^T$,
  $\pm (1/\sqrt{2} \enskip -1/\sqrt{2})^T$ each with probability 1/4, with
  covariance matrix
  $\Sigma = \frac{1}{4} \eparen{\begin{smallmatrix}5 & 3 \\ 3 &
      5\end{smallmatrix}}$, which is represented by the ellipse
  $x^T \Sigma^{-1} x = 1$. Confidence ball with level $\alpha = 0.05$ is
  represented by shaded area between ellipses corresponding to elements of the
  confidence ball with minimal and maximal trace. Confidence sequence from
  \cref{th:empirical_covariance} uses $\smash{b = 4}$ and a discrete mixture
  boundary with $\psi = \psi_G$ using $c = 2b/3$, mixture density $\flil_{1.4}$
  from \eqref{eq:lil_mixture} with $s = 1.4$ matching \eqref{eq:poly_stitching_explicit},
  $\eta = 1.1$ and $\overlambda = 0.262$ chosen as described in
  \cref{sec:discrete_mix_details}. \label{fig:covariance}  }
\end{figure}

In other words, with high probability, we have for all $t \geq 1$ that
\begin{align}
{\small
\opnorm{\Sigmahat_t - \Sigma}
  \lesssim \sqrt{\frac{b \log(d\log t)}{t}} + \frac{b \log(d\log t)}{t}.}
\end{align}
Compared to the fixed-sample result \eqref{eq:fixed_sample_covariance}, we
obtain uniform control by adding a factor of $\log \log t$. We are not aware of
other results like these for sequential covariance matrix
estimation. Figure~\ref{fig:covariance} illustrates the confidence sequence of
\cref{th:empirical_covariance} on simulated data using a discrete mixture
boundary with the mixture density $\flil_s$ defined in \eqref{eq:lil_mixture}.

\subsection{One-parameter exponential families}
\label{sec:one_param_exp}

Suppose $(X_t)$ are i.i.d.\ from an exponential family in mean parametrization,
with sufficient statistic $T(X)$ having mean in some set $\Omega$. For each
$\mu \in \Omega$, we write the density as
$\smash{f_{\mu}(x) = h(x) \expebrace{\theta(\mu) T(x) - A(\theta(\mu))}}$ where
$A'(\theta(\mu)) = \mu$. Let $\psi_\mu$ be the cumulant-generating function of
$T(X_1) - \mu$ when $\E T(X_1) = \mu$, that is,
$\smash{\psi_\mu(\lambda) \defineas A(\lambda + \theta(\mu)) - A(\theta(\mu)) - \lambda
\mu}$, with $\psi_\mu(\lambda) \defineas \infty$ if the RHS does not
exist. Writing $S_t(\mu) \defineas \sum_{i=1}^t T(X_i) - t \mu$, the process
$\expebrace{\lambda S_t(\mu) - t \psi_\mu(\lambda)}$ is the likelihood ratio
testing $H_0: \theta = \theta(\mu)$ against
$H_1: \theta = \theta(\mu) + \lambda$, and if we use a method-of-mixtures
uniform boundary, the resulting confidence sequence will be dual to a family of
mixture sequential probability ratio tests, as discussed in
\cref{sec:hypothesis_testing}. To obtain a two-sided confidence sequence, we use
the ``reversed'' CGF $\tilde{\psi}_\mu(\lambda) = \psi_\mu(-\lambda)$. We
summarize these observations as follows; see \citet[Theorem
1]{lai_confidence_1976} for a related result.

\begin{corollary}\label{th:expo_family}
  Suppose, for each $\mu \in \Omega$, $u_\mu$ is a sub-$\psi_\mu$ uniform bound
  with crossing probability $\alpha_1$, and $\tilde{u}_\mu$ is a
  sub-$\tilde{\psi}_\mu$ uniform bound with crossing probability
  $\alpha_2$. Defining
\begin{align}
  \CI_t \defineas
    \ebrace{\mu \in \Omega: -\tilde{u}_\mu(t) < S_t(\mu) < u_\mu(t)},
  \label{eq:expo_family_cs}
\end{align}
we have
$\P(\forall t \geq 1: \E T(X_1) \in \CI_t) \geq 1 - \alpha_1 - \alpha_2$.

\end{corollary}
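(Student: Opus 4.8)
The plan is to invoke the sub-$\psi$ uniform boundary property (\cref{th:uniform_boundary}) separately for the two tails of the centered process at the true but unknown mean $\mu^\star \defineas \E T(X_1)$, and then to combine the two resulting crossing bounds by a union bound; coverage follows because $\ebrace{\mu^\star \notin \CI_t \text{ for some } t}$ is precisely the union of these two crossing events.

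First I would fix $\mu^\star$ and unfold the definition \eqref{eq:expo_family_cs}: the event $\ebrace{\forall t \geq 1: \mu^\star \in \CI_t}$ equals $\ebrace{\forall t \geq 1: -\tilde u_{\mu^\star}(t) < S_t(\mu^\star) < u_{\mu^\star}(t)}$, so its complement is exactly $\ebrace{\exists t \geq 1: S_t(\mu^\star) \geq u_{\mu^\star}(t)} \cup \ebrace{\exists t \geq 1: -S_t(\mu^\star) \geq \tilde u_{\mu^\star}(t)}$. The strict inequalities defining $\CI_t$ line up with the non-strict inequalities appearing in \cref{th:uniform_boundary}, so these two events are genuine complements; this is the only place the strict/non-strict distinction matters.

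Next I would check that each of $(S_t(\mu^\star))$ and $(-S_t(\mu^\star))$ satisfies \cref{th:canonical_assumption} with the relevant $\psi$ and with variance process $V_t = t$. Since the $X_i$ are i.i.d.\ and $\psi_{\mu^\star}$ is, by construction, the cumulant-generating function of $T(X_1) - \mu^\star$, the process $L_t(\lambda) \defineas \expebrace{\lambda S_t(\mu^\star) - \psi_{\mu^\star}(\lambda) t}$ factors as a product of i.i.d.\ mean-one random variables and is therefore a martingale with $L_0(\lambda) = 1 \leq l_0 = 1$, for every $\lambda$ in the one-sided neighborhood $[0,\lambda_{\max})$ of the origin on which $\psi_{\mu^\star}$ is finite (nonempty because, for a regular exponential family in mean parametrization, $\theta(\mu^\star)$ lies in the interior of the natural parameter space). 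Hence $(S_t(\mu^\star))$ is $1$-sub-$\psi_{\mu^\star}$ with variance process $t$; and since $-(T(X_1) - \mu^\star)$ has cumulant-generating function $\lambda \mapsto \psi_{\mu^\star}(-\lambda) = \tilde\psi_{\mu^\star}(\lambda)$, the same argument shows $(-S_t(\mu^\star))$ is $1$-sub-$\tilde\psi_{\mu^\star}$ with variance process $t$. Applying the definition of a sub-$\psi$ uniform boundary (\cref{th:uniform_boundary}) to $u_{\mu^\star}$ and to $\tilde u_{\mu^\star}$ then yields $\P(\exists t: S_t(\mu^\star) \geq u_{\mu^\star}(t)) \leq \alpha_1$ and $\P(\exists t: -S_t(\mu^\star) \geq \tilde u_{\mu^\star}(t)) \leq \alpha_2$.

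A union bound over these two events bounds the probability of the complement of $\ebrace{\forall t: \mu^\star \in \CI_t}$ by $\alpha_1 + \alpha_2$, which is the claim. There is no serious obstacle: the argument is a two-sided application of machinery already in hand, and the only steps that need attention are the bookkeeping that pairs the left tail of $S_t(\mu^\star)$ with the reversed CGF $\tilde\psi_{\mu^\star}$ (not with $\psi_{\mu^\star}$), and the routine regularity remark ensuring $\psi_{\mu^\star}$ and $\tilde\psi_{\mu^\star}$ are finite on a nondegenerate range of $\lambda$ near $0$ so that the exponential martingales above are genuinely defined.
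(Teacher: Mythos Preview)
Your proposal is correct and follows essentially the same approach as the paper: verify that $(S_t(\mu^\star))$ is sub-$\psi_{\mu^\star}$ with $V_t = t$ via the exponential martingale (the paper phrases this as the likelihood ratio being a martingale under the null, which is the same fact), do the analogous check for $(-S_t(\mu^\star))$ with $\tilde\psi_{\mu^\star}$, apply the uniform boundary definition to each tail, and finish with a union bound. Your write-up is, if anything, slightly more careful than the paper's in unpacking the complement event and the strict/non-strict inequality bookkeeping.
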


\section{Simulations}\label{sec:simulations}

\begin{figure}[t!]
\centering
\includegraphics{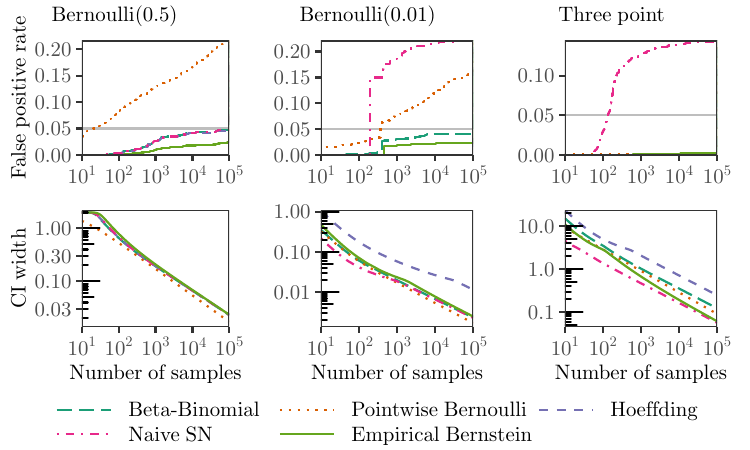}
\caption{Summary of 1,000 simulations, each with 100,000 i.i.d.\
  observations from the indicated distribution. Top panels show the proportion
  of replications in which the 95\%-confidence sequence has excluded the true
  mean by time $t$. Bottom panels show the mean confidence interval
  width. The ``three point'' distribution takes values $-1.408$ and $1$ with
  probability $0.495$ each, and takes value $20$ with probability
  $0.01$. ``Hoeffding'' uses a normal mixture boundary
  \eqref{eq:two_sided_normal_mixture}, while``Beta-Binomial'' uses the
  beta-binomial mixture (\cref{th:two_sided_beta_mixture}). ``Pointwise
  Bernoulli'' uses a nonasymptotic bound based on the Bernoulli KL-divergence
  which is valid pointwise but not uniformly. ``Empirical Bernstein'' uses the
  strategy given in \cref{th:empirical_variance} with a gamma-exponential
  mixture boundary, \cref{th:gamma_mixture}. ``Naive SN'' uses a normal mixture
  boundary with an empirical variance estimate, which does not guarantee
  coverage. In all cases, $\rho$ is chosen to optimize for a sample size of
  $t = 500$. \label{fig:bounded_simulations}}
\end{figure}

In\footnote{The repository \url{https://github.com/gostevehoward/cspaper}
  contains code to reproduce all simulations and plots in this paper. Uniform
  boundaries themselves are implemented in R and Python packages at
  \url{https://github.com/gostevehoward/confseq}.}
\cref{fig:bounded_simulations} we illustrate the error control of some of our
confidence sequences for estimating the mean of an i.i.d.\ sequence of
observations $(X_i)$ with bounded support $[a,b]$. We compare four strategies:
\begin{enumerate}
  
\item The Hoeffding strategy exploits the fact that bounded observations are
  sub-Gaussian (\citealp{hoeffding_probability_1963};
  cf. \citealp{howard_exponential_2018}, Lemma 3(c)).
  We use a two-sided normal mixture boundary
  \eqref{eq:two_sided_normal_mixture} with variance process
  $V_t = (b-a)^2 t / 4$.
\item The beta-binomial strategy uses the stronger condition that bounded
  observations are sub-Bernoulli (\citealp{hoeffding_probability_1963};
  cf. \citealp{howard_exponential_2018}, Fact 1(b)), accounting for the true
  mean as well as the boundedness, but possibly failing to take account of the
  true variance. For hypothesized true mean $\mu$, this strategy uses the
  beta-binomial mixture boundary given in \cref{th:two_sided_beta_mixture}, with
  parameters $g(\mu) = \mu - a$ and $h(\mu) = b - \mu$, and variance process
  $V_t(\mu) = g(\mu) h(\mu) t$. The confidence set for the mean is
  $\brace{\mu \in [a,b]: -f_{g(\mu),h(\mu)}(V_t(\mu)) \leq \sum_{i=1}^t X_i - t
    \mu \leq f_{h(\mu),g(\mu)}(V_t(mu))}$. This is more efficiently computed
  using the mixture supermartingale $m(S_t,V_t)$ of \eqref{eq:beta_mixture}, as
  $\brace{\mu \in [a,b]: m(\sum_{i=1}^t X_i - t \mu, V_t(\mu)) < 1 / \alpha}$.
\item The pointwise Bernoulli strategy uses the same
    sub-Bernoulli condition as the beta-binomial strategy, but relies on a
    fixed-sample Cram\'er-Chernoff bound which is valid pointwise but not
    uniformly over time. Specifically, we reject mean $\mu$ if
    $V_t \psi_B^\star(S_t / V_t) \geq \log \alpha^{-1}$, where $S_t$ is the sum
    of centered observations as usual, $V_t = (\mu - a)(b - \mu) t$, and we set
    $\smash{g = \mu - a, h = b - \mu}$ in $\psi_B$, with $\psi_B^\star$ its
    Legendre-Fenchel transform.
\item The empirical-Bernstein strategy uses an empirical estimate of variance,
  thus achieving a confidence width scaling with the true variance in all three
  cases. Here we use
  \cref{th:empirical_variance} with a gamma-exponential mixture boundary (\cref{th:gamma_mixture}). For predictions, we use the mean of past
  observations: $\Xhat_t = (t-1)^{-1} \sum_{i=1}^{t-1} X_i$.
\item The naive self-normalized (``Naive SN'') strategy plugs the
  empirical variance estimate, the sum of squared prediction errors from
  \cref{th:empirical_variance}, into the two-sided
  normal mixture \eqref{eq:two_sided_normal_mixture}. It ignores the facts
  that the observations are not sub-Gaussian with respect to their true variance
  and that the variance is estimated. This strategy is similar to that of
  \citet{johari_peeking_2017} and does not guarantee coverage. Though it will
  sometimes control false positives, coverage rates can easily be inflated
  for asymmetric, heavy-tailed distributions, as we illustrate.
\end{enumerate}

We present three cases of bounded distributions. The first case is the easiest,
with $\Ber(0.5)$ observations. Here the sub-Gaussian variance parameter based on
the boundedness of the observations is equal to the true variance, so the
Hoeffding strategy performs well. The empirical-Bernstein strategy is only a
little wider, and all four successfully control false positives. The story
changes with the more difficult $\Ber(0.01)$ distribution, however. The
Hoeffding boundary is far too wide, since it fails to make use of information
about the true variance. The beta-binomial bound uses information about variance
provided by the first moment to achieve the correct scaling. The naive
self-normalized strategy, on the other hand, yields confidence intervals that
are too small and fail to control false positive rate. The empirical-Bernstein
strategy, though only slightly wider than the naive bound for large sample
sizes, gives just enough extra width to control the false positive rate and is
nearly as narrow as the beta-binomial bound. The final, three-point
distribution takes values $-1.408$ and $1$ with probability $0.495$ each, and
takes value $20$ with probability $0.01$. Here the beta-binomial
strategy yields confidence intervals that are too wide. In this most difficult
case, only the empirical-Bernstein strategy yields tight intervals while 
controlling false positive rates.

\section{Implications for sequential hypothesis testing}
\label{sec:hypothesis_testing}

We have organized our presentation around confidence sequences and closely
related uniform concentration bounds
due to our belief that they offer a useful ``user interface'' for sequential
inference. However, our methods also yield always-valid $p$-values
\citep{johari_always_2015} for sequential tests. Indeed, a slew of related definitions from the
literature are equivalent or ``dual'' to one another. Here we briefly discuss these
connections.
The following result, proved
in \cref{sec:proof_equiv_uniform_defns}, gives equivalent formulations of
 common definitions in sequential testing.

\begin{lemma}\label{th:equiv_uniform_defns}
  Let $(A_t)_{t=1}^\infty$ be an adapted sequence of events in some filtered
  probability space and let $A_\infty \defineas \limsup_{t \to \infty} A_t$. The
  following are equivalent:
  \begin{enumerate}[label=(\alph*)]
  \item $\P\eparen{\eunion_{t = 1}^\infty A_t} \leq \alpha.$
  \item $\P(A_T) \leq \alpha$ for all random (not necessarily stopping) times $T$.
  \item $\P(A_\tau) \leq \alpha$ for all stopping times $\tau$, possibly
    infinite.
  \end{enumerate}
\end{lemma}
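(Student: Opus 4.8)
The plan is to prove the chain of implications $(a) \Rightarrow (b) \Rightarrow (c) \Rightarrow (a)$, which is the cleanest cyclic route; the first implication is the substantive one, while the other two are essentially bookkeeping.

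For $(a) \Rightarrow (b)$, observe that for any random time $T$ (finite-valued, say, or extended-valued with the convention that $A_\infty$ is the event in question when $T = \infty$), the event $A_T$ satisfies $A_T \subseteq \bigcup_{t=1}^\infty A_t$ pointwise: on $\ebrace{T = t}$ we have $A_T = A_t \cap \ebrace{T=t} \subseteq A_t$, and on $\ebrace{T = \infty}$ we have $A_T = A_\infty = \limsup_t A_t \subseteq \bigcup_t A_t$. Taking probabilities and invoking $(a)$ gives $\P(A_T) \leq \P(\bigcup_{t=1}^\infty A_t) \leq \alpha$. No measurability subtlety arises beyond checking that $A_T$ is an event, which follows since $(A_t)$ is adapted and $T$ is a random time (so $A_T = \bigcup_t (A_t \cap \ebrace{T = t}) \cup (A_\infty \cap \ebrace{T = \infty})$ is a countable union of measurable sets).

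The implication $(b) \Rightarrow (c)$ is immediate, since every stopping time is in particular a random time, so the inequality for all random times specializes to the inequality for all (possibly infinite) stopping times.

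For the final implication $(c) \Rightarrow (a)$, I would exhibit a specific stopping time that ``detects'' the union event. Define $\tau \defineas \inf\ebrace{t \geq 1 : A_t \text{ occurs}}$, with $\tau = \infty$ if no $A_t$ occurs; since $(A_t)$ is adapted, $\ebrace{\tau \leq t} = \bigcup_{s \leq t} A_s \in \Fcal_t$, so $\tau$ is a stopping time. On $\ebrace{\tau < \infty}$ we have $A_\tau = A_\tau$ occurs by construction, i.e.\ $\ebrace{\tau < \infty} \subseteq A_\tau$; conversely $A_\tau \subseteq \ebrace{\tau < \infty} \cup A_\infty$, and since $\ebrace{\tau = \infty}$ means no $A_t$ ever occurs, $A_\infty$ cannot occur there, so in fact $\ebrace{\tau < \infty} = \bigcup_{t=1}^\infty A_t$ up to the handling of $A_\infty$. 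More carefully: $\bigcup_{t=1}^\infty A_t = \ebrace{\tau < \infty} \subseteq A_\tau$ (where $A_\tau$ on $\ebrace{\tau = \infty}$ is interpreted as $A_\infty$, which is disjoint from $\ebrace{\tau < \infty}$). Hence $\P(\bigcup_{t=1}^\infty A_t) \leq \P(A_\tau) \leq \alpha$ by $(c)$, completing the cycle.

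The only real obstacle is notational care around the $T = \infty$ case: one must fix once and for all the convention that $A_T$ on $\ebrace{T = \infty}$ means $A_\infty = \limsup_t A_t$, and check that with this convention all three set inclusions above are literally true, not merely ``up to null sets.'' Once the conventions are pinned down, each step is a one-line set inclusion followed by monotonicity of $\P$.
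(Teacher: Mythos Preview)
Your proposal is correct and follows essentially the same approach as the paper: prove the cycle $(a)\Rightarrow(b)\Rightarrow(c)\Rightarrow(a)$, using the decomposition $A_T = \bigcup_t (A_t\cap\{T=t\}) \cup (A_\infty\cap\{T=\infty\}) \subseteq \bigcup_t A_t$ for the first implication and the first-hitting stopping time $\tau=\inf\{t:A_t\text{ occurs}\}$ for the last. The paper's version is terser (it asserts $A_\tau=\bigcup_t A_t$ directly), but your extra care around the $T=\infty$ convention and the verification that $\tau$ is a stopping time does not deviate from the argument.
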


Our definition of confidence sequences \eqref{eq:conf_seq_defn}, based on
\citet{darling_confidence_1967} and \citet{lai_incorporating_1984}, differs from
that \citet{johari_always_2015}, who require that
$\P(\theta_\tau \in \CI_\tau) \geq 1 - \alpha$ for all stopping times
$\tau$. They allow $\tau = \infty$ by defining
$\CI_\infty \defineas \liminf_{t \to \infty} \CI_t$. By taking
$A_t \defineas \brace{\theta_t \notin \CI_t}$ in \cref{th:equiv_uniform_defns},
we see that the distinction is immaterial, and furthermore that we could
equivalently define confidence sequences in terms of arbitrary random times, not
necessarily stopping times. This generalizes Proposition 1 of
\citet{zhao_adaptive_2016}.

\paragraph{Always-valid $p$-values and tests of power one} 
As an alternative to confidence sequences, \citet{johari_always_2015} define an
\emph{always-valid $p$-value process} for some null hypothesis $H_0$ as an
adapted, $[0,1]$-valued sequence $(p_t)_{t=1}^\infty$ satisfying
$\P_0(p_\tau \leq \alpha) \leq \alpha$ for all stopping times $\tau$, where
$\P_0$ denotes probability under the null $H_0$. Taking
$A_t \defineas \brace{p_t \leq \alpha}$ in \cref{th:equiv_uniform_defns} shows
that we may replace this definition with an equivalent one over all random
times, not necessarily stopping times, or with the uniform condition
$\P_0(\exists t \in \N: p_t \leq \alpha) \leq \alpha$. By analogy to the usual
dual construction between fixed-sample $p$-values and confidence
intervals, one can see
that confidence sequences are dual to always-valid $p$-values, and both are dual
to sequential tests, as defined by a stopping time and a binary
random variable indicating rejection \citep[Proposition 5]{johari_always_2015}.
In particular, for the null $H_0: \theta = \theta^\star$, if $(\CI_t)$ is a
$(1-\alpha)$-confidence sequence for $\theta$, it is clear that a test which
stops and rejects the null as soon as $\theta^\star \notin \CI_t$ controls type
I error:
$\P_0(\text{reject } H_0) = \P_0(\exists t \in \N : \theta^\star \notin \CI_t)
\leq \alpha$. Typically, then, a confidence sequence based on any of the curved
uniform bounds in this paper, with radius $u(v) = o(v)$, will yield a \emph{test
  of power one} \citep{darling_iterated_1967, robbins_statistical_1970}. In
particular, for a confidence sequence with limits $\bar{X}_t \pm u(V_t)$, it is
sufficient that $\bar{X}_t \convas \theta$ and
$\limsup_{t \to \infty} V_t / t < \infty$ a.s., conditions that usually
hold. These conditions imply that the radius of the confidence sequence,
$u(V_t) / t$, approaches zero, while the center $\bar{X}_t$ is eventually
bounded away from $\theta^\star$ whenever $\theta \neq \theta^\star$, so that
the confidence sequence eventually excludes $\theta^\star$ with probability
one.

In the one-parameter exponential family case considered in
\cref{sec:one_param_exp}, as noted above, the exponential process
$\expebrace{\lambda S_t(\mu) - t \psi_\mu(t)}$ is exactly the likelihood ratio
for testing $H_0: \theta = \theta(\mu)$ against
$H_1: \theta = \theta(\mu) + \lambda$. From the definitions
\eqref{eq:expo_family_cs} and \eqref{th:basic_mixture} we see that, when using a
mixture uniform boundary, a sequential test which rejects as soon as the
confidence sequence of \cref{th:expo_family} excludes $\mu^\star$ can be seen as
equivalently rejecting as soon as either of the mixture likelihood ratios
$\int \expebrace{\lambda S_t - \psi_{\mu^\star}(\lambda) t} \d F(\lambda)$ or
$\int \expebrace{-\lambda S_t - \psi_{\mu^\star}(-\lambda) t} \d F(\lambda)$
exceeds $2/\alpha$. Thus a sequential hypothesis test built upon a mixture-based
confidence sequence is equivalent to a mixture sequential probability ratio test
\citep{robbins_statistical_1970} in the parametric setting. As discussed
in \cref{sec:stitching_mixture}, stitching can be viewed as an
approximation to certain mixture bounds, so that hypothesis tests based on
stitched bounds are also approximations to mixture SPRTs. Importantly, our
confidence sequences are natural nonparametric
generalizations of the mixture SPRT, recovering various mixture SPRTs in the
parametric settings.

\paragraph{Pros and cons of the running intersection} 
Our definition \eqref{eq:conf_seq_defn} of a confidence sequence allows for the
parameter $\theta_t$ to vary with $t$. It is common in the literature on
sequential testing to assume a single, stationary parameter,
$\theta_t \equiv \theta$, but this assumption has a troublesome consequence in
the context of confidence sequences. If the confidence sequence $(\CI_t)$
satisfies $\P(\forall t: \theta \in \CI_t) \geq 1 - \alpha$, then the running intersection
$\widetilde{\CI}_t \defineas \intersect_{s \leq t} \CI_t$ is also uniformly valid for
$\theta$, is never larger and may be much smaller. 
This was observed by \citet{darling_iterated_1967}, and is used in the implementation of
\citet{johari_peeking_2017}, for example. (In the language of sequential testing, if $(p_t)_{t=1}^\infty$ is an always-valid $p$-value process, then so is $(\min_{s\leq t}p_s)_{t=1}^\infty$.)

However, the intersected intervals $\widetilde{\CI}_t$ may become empty at some
point. This is particularly likely if the underlying parameter is drifting over
time, contrary to the assumption of stationarity or identically-distributed
observations, and such a drift would be the likely interpretation of this event
in practice. In this non-stationary case, the non-intersected sequence is the
more sensible one to use. The solution of \citet{johari_peeking_2017} is to
``reset'' the experiment, discarding data accumulated up to that point, on the
rationale that such an event indicates that previous data are no longer relevant
to estimation of the current parameter of interest. However, this means that our
confidence sequence can go from a very high precision estimate at some time $t$
to knowing almost nothing at time $t+1$, which is difficult for an experimenter
to interpret and could lead to misleading inference just before the
reset. \Citet{jennison_interim_1989} make a case for the non-intersected
intervals on slightly different grounds, arguing that estimation at time $t$
ought to be a function of the sufficient statistic at that time. 
Shifting to the potential outcomes model in \cref{sec:ate}
neatly avoids this issue: because the estimand is changing at each time, the
non-intersected intervals are the only reasonable choice for estimating $\ATE_t$
and no conceptual difficulty remains.

\section{Summary and future work}\label{sec:discussion}

We have discussed four techniques for deriving curved uniform boundaries, each
improving upon past work, with careful attention paid to constants and to
practical issues. By building upon the general framework of
\citet{howard_exponential_2018}, we have emphasized the nonparametric
applicability of our boundaries. A leading example of the utility of this
approach is the general empirical-Bernstein bound, with an application to
sequential causal inference, and we have also shown how our framework
immediately yields novel results for matrix martingales.

\subsection{Other related work}\label{sec:other_related_work}

We introduced the method of mixtures and the epoch-based analyses in
\cref{sec:related_work}. Two other methods of extending the SPRT deserve
mention, though they are distinct from our approaches. First, the approach of
\citet{robbins_class_1972, robbins_expected_1974} examines
$\prod_i f_{\lambdahat_{i-1}}(X_i) / f_0(X_i)$ where $\lambdahat_{i-1}$ is a ``nonanticipating''
estimate based on $X_1, \dots, X_{i-1}$. This is similar to a generalized
likelihood ratio but modified to retain the martingale property
(cf. Wald \citep[section 10.5]{wald_sequential_1947},
\citep{lorden_nonanticipating_2005}). Second, the sequential generalized
likelihood ratio approach examines
$\sup_\lambda \prod_i f_\lambda(X_i) / f_0(X_i)$, which is not a martingale
under the null \citep{siegmund_sequential_1980, lai_optimal_1997,
  kulldorff_maximized_2011}.

The concept of \emph{test (super)martingales} expounded by
\citet{shafer_test_2011} is related to our methods for conducting inference
based on Ville's inequality applied to nonnegative supermartingales. Their main
example is the Beta mixture for i.i.d.\ Bernoulli observations, an example which
originated with \citet{ville_etude_1939} and discussed by
\citet{robbins_statistical_1970} and \citet{lai_confidence_1976}.  A recent
``safe testing'' framework of \Citet{grunwald_safe_2019} is also tightly
related. In terms of these frameworks, our work can be viewed as constructing
``safe confidence intervals'' (and thus safe tests) using nonparametric test
supermartingales.

A very different approach is that of group sequential
methods \citep{pocock_group_1977, obrien_multiple_1979, lan_discrete_1983,
  jennison_group_2000}. These methods rely on either exact discrete
distributions or asymptotics to assume exact normality of group increments,
either of which permits computation of sequential boundaries via numerical
integration. The resulting confidence sequences are tighter than ours, but lack
nonasymptotic guarantees or closed-form results and do not support continuous
monitoring.

A related problem is that of terminal confidence intervals, in which one assumes
a rigid stopping rule and wishes to construct a confidence interval upon
termination. \citet{siegmund_estimation_1978} gave an analytical treatment of
the problem; numerical methods are also available for group sequential tests
\citep[section 8.5]{jennison_group_2000}. However, the idea of a rigid stopping
rule is often restrictive.

\subsection{Future work}

We discuss in \cref{sec:banach} how our work may be extended to
martingales in smooth Banach spaces and real-valued, continuous-time
martingales. It may be fruitful to explore applications in those areas.

Our consideration of optimality has been limited to the discussion in
\cref{sec:admissibility}. It would be valuable to further explore various
optimality properties for nonasymptotic uniform bounds. For example,
%
it is standard in sequential testing to compute the
  expected sample size to reject a null under parametric
  alternatives. Though we target less restrictive assumptions,
  it may be instructive to compute bounds in special cases.
Second, a natural counterpoint to our uniform concentration bounds would be a set
  of uniform anticoncentration bounds. 
  This would yield a nonasymptotic extension of the ``lim inf''
  half of the classical LIL. \citet[Theorem
  3]{balsubramani_sharp_2014} gives one such interesting result.
Last, in practice, one will rarely require updated
inference after every observation, and may be content to
take observations in groups. Further, one may be satisfied with a finite time
horizon \cite{garivier2011context}.  This is the domain in which
group-sequential methods shine, but SPRT-based methods can be made competitive
by estimating the ``overshoot'' of the stopped supermartingale
\citep{lai_nonlinear_1977, lai_nonlinear_1979, siegmund_sequential_1985,
  whitehead_group_1983}.  It would be interesting to understand whether such
improvements work out in nonparametric settings.

\subsection*{Acknowledgments}

Howard thanks ONR Grant N00014-15-1-2367. Sekhon thanks ONR grants N00014-17-1-2176 and
N00014-15-1-2367. Ramdas thanks NSF grant DMS1916320. We thank Boyan Duan and Ian Waudby-Smith as
well as the referees/AE for useful suggestions.

\bibliographystyle{agsm}
\bibliography{best_arm}

\appendix

\section{Proofs of main results}\label{sec:main_proofs}

In this section we give proofs of our main results along with selected
discussion of and intuition for proof techniques.

\subsection{Proof of \cref{th:stitching}}\label{sec:proof_stitching}

The idea behind \cref{th:stitching} is to divide intrinsic time into
geometrically spaced epochs, $\eta^k \leq V_t < \eta^{k+1}$ for some $\eta >
1$. We construct a linear boundary within each epoch using
\cref{th:uniform_chernoff} and take a union bound over crossing events of the
different boundaries. The resulting, piecewise-linear boundary may then be upper
bounded by a smooth, concave function. Figure~\ref{fig:stitching} illustrates
the construction.

As discussed in \cref{sec:stitching}, the function $h$ determines the nominal
crossing probability $\alpha/h(k)$ allocated to the \kth epoch, and we have
mentioned the choices $h(k) = \eta^{sk} / (1 - \eta^{-s})$ and
$h(k) = (k + 1)^s \zeta(s)$. One may substitute a series converging yet more
slowly; for example, $h(k) \propto (k+2) \log^s(k+2)$ for $s > 1$ yields
\begin{align}
  \log h(\log_\eta V_t) = \log \log_\eta(\eta^2 V_t)
    + s \log \log \log_\eta(\eta^2 V_t)
    + \log\pfrac{\log^{1-s}(3/2)}{s-1},
  \label{eq:triple_log}
\end{align}
matching related analysis in \citet{darling_iterated_1967},
\citet{robbins_probability_1969}, \citet{robbins_statistical_1970}, and
\citet{balsubramani_sharp_2014}. In practice, the bound \eqref{eq:triple_log}
appears to behave like bound \eqref{eq:poly_stitching} with worse
constants. However, the fact that the stitching approach can recover key
theoretical results like these gives some indication of its power.

\begin{proof}[Proof of \cref{th:stitching}]
  We prove the result in the case $m = 1$ for simplicity. The general result may
  be obtained by considering $S_t / \sqrt{m}$ in place of $S_t$, $V_t / m$ in
  place of $V_t$, and $c / \sqrt{m}$ in place of $c$. See
  \cref{sec:change_units} for details.

  We first compute $\psi_G^{-1}(u)$ by taking the positive solution to the
  quadratic equation given by $\psi_G(\lambda) = u$, yielding
\begin{align}
  \psi_G^{-1}(u) &= -cu \pm \sqrt{c^2 u^2 + 2u}
    = \frac{2}{c + \sqrt{c^2 + 2/u}}, \label{eq:psi_G_inv}
\end{align}
where we have used the identity $\sqrt{1 + x} - 1 = \frac{x}{\sqrt{1 + x} + 1}$.
Let
\begin{align}
  K(u) \defineas \frac{\sqrt{2u}}{\psi_G^{-1}(u)}
    = \sqrt{1 + \frac{c^2 u}{2}} + c\sqrt{\frac{u}{2}}.
  \label{eq:K_u}
\end{align}
$K(u)$ will appear below. Now we start from the line-crossing inequality of
\cref{th:uniform_chernoff}: reparametrizing $r = \log \alpha^{-1}$, we have for
any $r > 0, \lambda > 0$
\begin{align}
\P\Bigg(
  \exists t \geq 1 : S_t \geq
  \underbrace{\frac{r + \psi_G(\lambda) V_t}{\lambda}}_{g_{\lambda,r}(V_t)}
\Bigg) \leq l_0 e^{-r}.
\label{eq:stitching_basic_bound}
\end{align}

We divide intrinsic time into epochs $\eta^k \leq V_t < \eta^{k+1}$ for each
$k = 0, 1, \dots$, and we will construct a linear boundary over each epoch by
carefully choosing values for $\lambda_k$ and $r_k$ and using the probability
bound \eqref{eq:stitching_basic_bound}. We choose $\lambda_k$ so that the
``standardized'' boundary takes equal values at both endpoints of the epoch:
$g_{\lambda_k,r_k}(\eta^k) / \eta^{k/2} = g_{\lambda_k,r_k}(\eta^{k+1}) /
\eta^{(k+1)/2}$. This equation is solved by
$\lambda_k = \psi_G^{-1}(r_k / \eta^{k+1/2})$, which yields, after some algebra,
\begin{align}
g_{\lambda_k,r_k}(v)
  &= K\pfrac{r_k}{\eta^{k+1/2}}
    \ebracket{\sqrt{\frac{\eta^{k+1/2}}{v}} + \sqrt{\frac{v}{\eta^{k+1/2}}}}
    \sqrt{\frac{r_k v}{2}}
    \label{eq:g_first_expression}
\end{align}
Our goal, after choosing $r_k$ below, is to upper bound this expression by a
function of $v$ alone, independent of $k$. Noting that the term in square
brackets in \eqref{eq:g_first_expression} reaches its maximum over the \kth
epoch at the endpoints, $v = \eta^k$ and $v = \eta^{k+1}$, and substituting the
expression \eqref{eq:K_u} for $K(u)$, we have
\begin{align}
g_{\lambda_k,r_k}(v)
  &\leq \eparen{\sqrt{1 + \frac{c^2 r_k}{2\eta^{k+1/2}}}
                + c \sqrt{\frac{r_k}{2\eta^{k+1/2}}}}
    \frac{\eta^{1/4} + \eta^{-1/4}}{\sqrt{2}} \sqrt{r_k v},
    \quad \text{for all } \eta^k \leq v < \eta^{k+1}.
\end{align}
The inequality $\eta^{k+1/2} \geq v / \sqrt{\eta}$ yields
\begin{align}
g_{\lambda_k,r_k}(v)
  &\leq \frac{\eta^{1/4} + \eta^{-1/4}}{\sqrt{2}}
    \eparen{\sqrt{r_k v + \frac{\sqrt{\eta} c^2 r_k^2}{2}}
            + c \frac{\eta^{1/4} r_k}{\sqrt{2}}} \\
  &= \sqrt{k_1^2 r_k v + k_2^2 c^2 r_k^2}
     + c k_2 r_k,
    \quad \text{for all } \eta^k \leq v < \eta^{k+1},
\end{align}
using the definition \eqref{eq:stitching_operator} of $k_1$ and $k_2$. Now let
$r_k = \log(l_0 h(k) / \alpha)$, which we choose to ensure total error
probability will be bounded by $\alpha$ via a union bound. Note that $h$ is
nondecreasing and ${k \leq \log_\eta V_t}$ over the epoch, so that
$r_k \leq \ell(v)$ over the epoch, recalling the definition
\eqref{eq:stitching_operator} of $\ell(v)$. We conclude
\begin{align}
g_{\lambda_k,r_k}(v)
  &\leq \sqrt{k_1^2 v \ell(v) + k_2^2 c^2 \ell^2(v)} + c k_2 \ell(v)
  = \Scal_\alpha(v),
\end{align}
for all $\eta^k \leq v < \eta^{k+1}$. This final expression no longer depends on
$k$, showing that the final boundary $\Scal_\alpha(v)$ majorizes the
corresponding linear boundary $g_{\lambda_k,r_k}(v)$ over each epoch
$\eta^k \leq v < \eta^{k+1}$ for $k = 0, 1, \dots$. Hence
\begin{align}
\Scal_\alpha(v) \geq \min_{k \geq 0} g_{\lambda_k,r_k}(v)
\quad \text{for all } v \geq 1.
\end{align}
But the first linear boundary $g_{\lambda_0,t_0}(v)$ passes through
$\Scal_\alpha(1)$ and has positive slope, which implies
\begin{align}
\Scal_\alpha(1 \bmax v) \geq \min_{k \geq 0} g_{\lambda_k,r_k}(v)
\quad \text{for all } v > 0.
\label{eq:stitching_majorize}
\end{align}
Now taking a union bound over the probability bounds given by
\eqref{eq:stitching_basic_bound} for $k = 0, 1, \dots$, we have
\begin{align}
\P\eparen{\exists t \geq 1 : S_t \geq \min_{k \geq 0} g_{\lambda_k,r_k}(V_t)}
  \leq l_0 \sum_{k=0}^\infty e^{-r_k} = \alpha \sum_{k=0}^\infty \frac{1}{h(k)}
  \leq \alpha.
\label{eq:stitching_union_bound}
\end{align}
Combining \eqref{eq:stitching_union_bound} with \eqref{eq:stitching_majorize}
proves that $v \mapsto \Scal_\alpha(1 \bmax v)$ is a sub-gamma uniform boundary
with crossing probability $\alpha$.

For the second statement \eqref{eq:late_crossing_decay}, we simply restrict the
union bound to epochs $k \geq \floor{\log_\eta V_t}$, which restricts the sum in
\eqref{eq:stitching_union_bound} accordingly.
\end{proof}

We have given a stitched bound which is constant for $v < m$, but inspection of
the proof shows that one may improve the bound to be linear with positive slope
on $v < m$, by extending the linear bound over the first epoch to cover all
$v > 0$. This seems of limited utility for theoretical work, and we recommend
other bounds over the stitched bound for practice, so we do not pursue this
point further.

The idea of taking a union bound over geometrically spaced epochs is standard in
the proof of the classical law of the iterated logarithm \citep[Theorem
8.5.1]{durrett_probability:_2017}. The idea has been extended to finite-time
bounds by \citet{darling_iterated_1967}, \citet{jamieson_lil_2014},
\citet{kaufmann_complexity_2014}, and \citet{zhao_adaptive_2016}, usually when
the observations are independent and sub-Gaussian; the technique is sometimes
called ``peeling''. Of course, \cref{th:stitching} generalizes these
constructions much beyond the independent sub-Gaussian case, but it also
achieves tighter constants for the sub-Gaussian setting. Here, we briefly
discuss how the improved constants arise.

Both \citet{jamieson_lil_2014} and \citet{zhao_adaptive_2016} construct a
constant boundary rather than a linear increasing boundary over each epoch. They
apply Doob's maximal inequality for submartingales \citep[Theorem
4.4.2]{durrett_probability:_2017}, as in
\citet[eq. 2.17]{hoeffding_probability_1963}, to obtain boundaries similar to
that of \citet{freedman_tail_1975}. As illustrated in \citet[Figure
2]{howard_exponential_2018}, the linear bounds from \cref{th:uniform_chernoff}
are stronger than corresponding Freedman-style bounds, and the additional
flexibility yields tighter constants.

Both \citet{darling_iterated_1967} and \citet{kaufmann_complexity_2014} use
linear boundaries within each epoch analogous to those of
\cref{th:uniform_chernoff}. Both methods share a great deal in common with ours,
and \citeauthor{darling_iterated_1967} give consideration to general
cumulant-generating functions. Recall from \cref{th:uniform_chernoff} that such
linear boundaries may be chosen to optimize for some fixed time $V_t = m$. Our
method chooses the linear boundary within each epoch to be optimal at the
geometric center of the epoch, i.e., at $V_t = \eta^{k+{1/2}}$, so that at both
epoch endpoints the boundary will be equally ``loose'', that is, equal multiples
of $\sqrt{V_t}$. \Citeauthor{darling_iterated_1967} choose the boundaries to be
tangent at the start of the epoch, hence their boundary is looser than ours at
the end of the epoch. \citeauthor{kaufmann_complexity_2014} choose the boundary
as we do, but appear to incur more looseness in the subsequent inequalities used
to construct a smooth upper bound.

\subsection{Proof of \cref{th:asymptotic_lil}}\label{sec:proof_asymptotic_lil}

Fix any $\epsilon > 0$ and choose $a > 0$ small enough that
$\psi(\lambda) \leq (1 + \epsilon) \lambda^2 / 2$ for all $\lambda \in (0,
a)$. Using the fact that $\psi_{G,c}(\lambda) \geq \lambda^2 / 2$ for
$c \geq 0$, we have $\psi(\lambda) \leq (1 + \epsilon) \psi_{G,1/a}(\lambda)$
for all $\lambda \in (0, a)$, so that $(S_t)$ is sub-gamma with scale $c = 1/a$
and variance process $((1 + \epsilon) V_t)$. Now \cref{th:stitching} shows that
\begin{align}
  \P\eparen{\sup_t V_t = \infty \text{ and }
    S_t \geq u((1 + \epsilon) V_t) \text{ infinitely often}} = 0,
\end{align}
where we may choose $u(v) \sim \sqrt{2 (1 + \epsilon) v \log \log v}$ (see
\eqref{eq:poly_stitching} and discussion thereafter), so that
$u((1 + \epsilon) v) \sim \sqrt{2 (1 + \epsilon)^2 v \log \log v}$. It follows
that
\begin{align}
  \limsup_{t \to \infty} \frac{S_t}{\sqrt{2 (1 + \epsilon)^2 V_t \log \log V_t}}
  \leq 1
  \quad \text{on } \ebrace{\sup_t V_t = \infty}.
\end{align}
As $\epsilon > 0$ was arbitrary, we are done. \qed

\subsection{Conjugate mixture proofs}
\label{sec:proof_mixtures}

\begin{proof}[Proof of \cref{th:basic_mixture}]
  Assume $(S_t)$ is sub-$\psi$ with variance process $(V_t)$, so that, for each
  $\lambda \in [0, \lambda_{\max})$, we have
  $\expebrace{\lambda S_t - \psi(\lambda) V_t} \leq L_t(\lambda)$ where
  $(L_t(\lambda))_{t=0}^\infty$ is a nonnegative supermartingale. We will show
  that $M_t \defineas \int L_t(\lambda) \d F(\lambda)$ is a supermartingale with
  respect to $(\Fcal_t)$.

  Formally, for this proof, we augment the underlying probability space with the
  random variable $\lambda$ having distribution $F$ over the Borel
  $\sigma$-field on $\R$, independent of everything else. For each $t$, we
  require $L_t$ to be a random variable on this product space, i.e., it must be
  product measurable. Now \cref{th:canonical_assumption} stipulates that
  $L_t \in \sigma(\lambda, \Fcal_t)$ and
  $\E\condparen{L_t}{\lambda, \Fcal_{t-1}} \leq L_{t-1}$ for each $t \geq 1$,
  and additionally, $\E\condparen{L_0}{\lambda} \leq l_0$ a.s. In other words,
  $(L_t)$ is a supermartingale with respect to the filtration given by
  $\Gcal_t \defineas \sigma(\lambda, \Fcal_t)$ on this augmented space.
  Finally, we have $M_t = \E\condparen{L_t}{\Fcal_t}$. These facts follow
  directly from the definition and properties of conditional expectation.

  We claim that $(M_t)$ is a supermartingale with respect to $(\Fcal_t)$ on this
  augmented space. Indeed,
  \begin{align}
  \E\condparen{M_t}{\Fcal_{t-1}}
    = \E\condparen{\E\condparen{L_t}{\Fcal_t}}{\Fcal_{t-1}}
    = \E\condparen{\E\condparen{L_t}{\lambda, \Fcal_{t-1}}}{\Fcal_{t-1}}
    \leq \E\condparen{L_{t-1}}{\Fcal_{t-1}}
  \end{align}
  by the supermartingale property, and this last expression is equal to
  $M_{t-1}$. Furthermore, $\E M_0 = \E \E\condparen{L_0}{\lambda} \leq l_0$
  since $\E\condparen{L_0}{\lambda} \leq l_0$ a.s., hence
  $\E \abs{M_t} = \E M_t \leq l_0$ for all $t$.

  Now \cref{th:canonical_assumption} and Ville's maximal inequality for
  nonnegative supermartingales \citep[exercise 4.8.2]{durrett_probability:_2017}
  yield
  \begin{align}
    \P\eparen{
      \exists t \geq 1:
      \int \expebrace{\lambda S_t - \psi(\lambda) V_t} \d F(\lambda)
      \geq \frac{l_0}{\alpha}}
    \leq \P\eparen{\exists t \geq 1: M_t \geq \frac{l_0}{\alpha}}
    \leq \alpha.
  \end{align}
  In other words, $\P(\exists t \geq 1: S_t \geq \Mcal_\alpha(V_t)) \leq \alpha$
  by the definition of $\Mcal_\alpha$, which is the desired conclusion.
\end{proof}

In the sub-Gaussian case, the following boundary is well-known \citep[example
2]{robbins_statistical_1970}.

\begin{proposition}[Two-sided normal mixture]\label{th:two_sided_normal_mixture}
  Suppose both $(S_t)$ and $(-S_t)$ are sub-Gaussian with variance process
  $(V_t)$. Fix $\alpha \in (0,1)$ and $\rho > 0$, and define
  \begin{align}
    u(v) \defineas \sqrt{
        (v + \rho)
        \log\eparen{\frac{l_0^2 (v + \rho)}{\alpha^2 \rho}}
      }. \label{eq:two_sided_normal_mixture_2}
  \end{align}
  Then $\P(\forall t \geq 1: \abs{S_t} < u(V_t)) \geq 1 - \alpha$.
\end{proposition}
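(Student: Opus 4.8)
The plan is to invoke the method of mixtures with a centered Gaussian mixing distribution spanning both tails, which is the standard route to this closed-form boundary; the only twist relative to \cref{th:basic_mixture} is that we mix over $\lambda \in \R$ rather than $\lambda \geq 0$, so we must assemble a two-sided supermartingale family by hand. First I would take $F = \Normal(0, \rho^{-1})$, with density $f(\lambda) = \sqrt{\rho/(2\pi)}\,\exp\{-\rho\lambda^2/2\}$. Since $(S_t)$ is sub-Gaussian there is, for each $\lambda \geq 0$, a nonnegative supermartingale $L_t^{+}(\lambda) \geq \exp\{\lambda S_t - \lambda^2 V_t/2\}$ with $\E L_0^{+}(\lambda) \leq l_0$; since $(-S_t)$ is sub-Gaussian there is likewise, for each $\lambda \geq 0$, a nonnegative supermartingale $L_t^{-}(\lambda) \geq \exp\{-\lambda S_t - \lambda^2 V_t/2\}$ with $\E L_0^{-}(\lambda) \leq l_0$. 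Splicing, set $L_t(\lambda) \defineas L_t^{+}(\lambda)$ for $\lambda \geq 0$ and $L_t(\lambda) \defineas L_t^{-}(-\lambda)$ for $\lambda < 0$; in both cases $L_t(\lambda) \geq \exp\{\lambda S_t - \lambda^2 V_t/2\}$ and $\E L_0(\lambda) \leq l_0$.

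Next I would set $M_t \defineas \int_{\R} L_t(\lambda)\,\d F(\lambda)$ and run the Fubini/tower-property argument from the proof of \cref{th:basic_mixture} essentially verbatim (augmenting the probability space with an independent $\lambda \sim F$; the product-measurability condition holds here because in the scalar case $L_t(\lambda) = \exp\{\lambda S_t - \lambda^2 V_t/2\}$ is jointly measurable) to conclude that $(M_t)$ is a nonnegative $(\Fcal_t)$-supermartingale with $\E M_0 \leq l_0$. Ville's maximal inequality then gives $\P(\exists t \geq 0 : M_t \geq l_0/\alpha) \leq \alpha$.

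The remaining work is a Gaussian integral and an inversion. Completing the square in $\lambda$,
\[
  M_t \;\geq\; \int_{\R} \sqrt{\tfrac{\rho}{2\pi}}\, \exp\!\Big\{\lambda S_t - \tfrac{\lambda^2(V_t+\rho)}{2}\Big\}\,\d\lambda \;=\; \sqrt{\tfrac{\rho}{V_t+\rho}}\,\exp\!\Big\{\tfrac{S_t^2}{2(V_t+\rho)}\Big\}.
\]
Solving $\sqrt{\rho/(V_t+\rho)}\,\exp\{S_t^2/(2(V_t+\rho))\} \geq l_0/\alpha$ for $\abs{S_t}$ reproduces exactly the event $\abs{S_t} \geq u(V_t)$ with $u$ as in \eqref{eq:two_sided_normal_mixture_2} (en route one checks that $l_0^2(v+\rho)/(\alpha^2\rho) > 1$, so $u$ is real-valued). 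Hence $\{\exists t \geq 1 : \abs{S_t} \geq u(V_t)\} \subseteq \{\exists t \geq 0 : M_t \geq l_0/\alpha\}$, and the probability bound transfers, giving $\P(\forall t \geq 1 : \abs{S_t} < u(V_t)) \geq 1 - \alpha$.

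I expect no genuine obstacle here: the delicate steps are the hand-assembly of the two-sided supermartingale family (so that mixing against a symmetric $F$ is legitimate) and inheriting the product-measurability bookkeeping from \cref{th:basic_mixture}; the integral and the algebraic inversion are routine. One could instead union-bound two one-sided normal-mixture boundaries, but that costs a factor of two in $\alpha$ and yields a weaker constant, which is precisely what the direct two-sided mixture is designed to avoid.
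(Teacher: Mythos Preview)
Your proposal is correct and matches the paper's intended approach. The paper does not write out a separate proof of this proposition, treating it as well-known (citing \citet[example 2]{robbins_statistical_1970}) and noting in the discussion preceding the conjugate-mixture results that ``mixing over $\lambda \in \R$ rather than $\lambda \in \R_{\geq 0}$ yields a two-sided bound directly''; your argument is exactly this two-sided extension of \cref{th:basic_mixture} with the Gaussian mixing density, and your Gaussian integral and inversion recover the stated $u(v)$ precisely.
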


We have included the bound in
\cref{fig:finite_lil_boundaries,fig:normalized_boundaries}; although its
$\Ocal(\sqrt{V_t \log V_t})$ rate of growth is worse than the finite LIL
discrete mixture bound, it can achieve tighter control over about three orders
of magnitude of intrinsic time. This makes the normal mixture preferable in many
practical situations when a sub-Gaussian assumption applies. When only a
one-sided sub-Gaussian assumption holds, the normal mixture still yields a
sub-Gaussian uniform boundary.

\begin{proposition}[One-sided normal mixture]\label{th:normal_mixture}
  For any $\alpha \in (0,1)$ and $\rho > 0$, the boundary
  \begin{align}
    \NM_\alpha(v) = \sup\ebrace{
      s \in \R : \sqrt{\frac{4 \rho}{v + \rho}}
      \expebrace{\frac{s^2}{2(v + \rho)}} \Phi\pfrac{s}{\sqrt{v + \rho}}
      < \frac{l_0}{\alpha}
    }. \label{eq:normal_mixture}
  \end{align}
  is a sub-Gaussian uniform boundary with crossing probability
  $\alpha$. Furthermore, we have the following closed-form upper bound:
  \begin{align}
    \NM_\alpha(v) \leq \widetilde{\NM}_\alpha(v) \defineas \sqrt{
      2(v + \rho)
      \log\eparen{\frac{l_0}{2 \alpha} \sqrt{\frac{v + \rho}{\rho}} + 1}
    }.\label{eq:normal_mixture_closed}
  \end{align}
\end{proposition}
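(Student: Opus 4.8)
The plan is to recognize $\NM_\alpha$ as the conjugate mixture boundary of \cref{th:basic_mixture} applied to $\psi = \psi_N$ (so $\psi_N(\lambda) = \lambda^2/2$) with $F$ the half-normal distribution on $[0,\infty)$, i.e.\ the distribution with density $f(\lambda) = \sqrt{2\rho/\pi}\,e^{-\rho\lambda^2/2}$ for $\lambda \geq 0$. The first step is to evaluate $m(s,v) = \int_0^\infty e^{\lambda s - \lambda^2 v / 2}\,\mathrm{d}F(\lambda)$ in closed form: completing the square in $\lambda$ rewrites the exponent as $-\tfrac{v+\rho}{2}\bigl(\lambda - \tfrac{s}{v+\rho}\bigr)^2 + \tfrac{s^2}{2(v+\rho)}$, and the substitution $u = \sqrt{v+\rho}\,\bigl(\lambda - \tfrac{s}{v+\rho}\bigr)$ turns the remaining integral into $\tfrac{1}{\sqrt{v+\rho}}\int_{-s/\sqrt{v+\rho}}^\infty e^{-u^2/2}\,\mathrm{d}u = \sqrt{2\pi/(v+\rho)}\,\Phi\bigl(s/\sqrt{v+\rho}\bigr)$. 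Collecting the constants gives exactly $m(s,v) = \sqrt{4\rho/(v+\rho)}\,\expebrace{s^2/(2(v+\rho))}\,\Phi\bigl(s/\sqrt{v+\rho}\bigr)$, which is the quantity appearing in \eqref{eq:normal_mixture}, so $\NM_\alpha(v) = \Mcal_\alpha(v)$ in the notation of \cref{th:basic_mixture}. The product-measurability hypothesis of \cref{th:basic_mixture} is immediate here, since in the scalar sub-Gaussian case $L_t(\lambda) = \expebrace{\lambda S_t - \lambda^2 V_t/2}$ is continuous in $\lambda$ and measurable in $\omega$, hence jointly measurable; thus \cref{th:basic_mixture} directly yields that $\NM_\alpha$ is a sub-Gaussian uniform boundary with crossing probability $\alpha$.

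For the closed-form bound \eqref{eq:normal_mixture_closed}, the plan is a monotonicity argument. Since both $s \mapsto \expebrace{s^2/(2(v+\rho))}$ and $s \mapsto \Phi(s/\sqrt{v+\rho})$ are positive and strictly increasing, $m(\cdot,v)$ is continuous and strictly increasing on $\R$; it tends to $0$ as $s \to -\infty$ (by the Gaussian tail estimate, which dominates the exponential factor) and to $\infty$ as $s \to \infty$. Hence $\NM_\alpha(v)$ is the unique solution of $m(s,v) = l_0/\alpha$, and to prove $\NM_\alpha(v) \leq \widetilde{\NM}_\alpha(v)$ it suffices to check $m\bigl(\widetilde{\NM}_\alpha(v), v\bigr) \geq l_0/\alpha$. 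Write $s^\star = \widetilde{\NM}_\alpha(v)$ and $x = s^\star/\sqrt{v+\rho} \geq 0$; by definition of $\widetilde{\NM}_\alpha$ we have $\expebrace{x^2/2} = \tfrac{l_0}{2\alpha}\sqrt{(v+\rho)/\rho} + 1$, so that $m(s^\star,v) = \bigl(\tfrac{l_0}{\alpha} + 2\sqrt{\rho/(v+\rho)}\bigr)\,\Phi(x)$. Now invoke the Chernoff tail bound $1 - \Phi(x) \leq \expebrace{-x^2/2}$ together with the algebraic identity $\expebrace{-x^2/2} = \bigl[\tfrac{l_0}{2\alpha}\sqrt{(v+\rho)/\rho} + 1\bigr]^{-1} = \tfrac{2\sqrt{\rho/(v+\rho)}}{l_0/\alpha + 2\sqrt{\rho/(v+\rho)}}$; this gives $\Phi(x) \geq \tfrac{l_0/\alpha}{l_0/\alpha + 2\sqrt{\rho/(v+\rho)}}$, and hence $m(s^\star, v) \geq l_0/\alpha$, as required.

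Every step here is a short, essentially routine computation, so there is no real "hard part"; the points requiring care are (i) verifying the product-measurability condition and the continuity/strict monotonicity/unboundedness of $m(\cdot,v)$ so that the inversion $\NM_\alpha(v) = m^{-1}(l_0/\alpha; v)$ is legitimate, and (ii) matching constants at the end. On the latter, it is worth noting that the crude bound $1 - \Phi(x) \leq \expebrace{-x^2/2}$, rather than the sharper $\tfrac12\expebrace{-x^2/2}$, is exactly what is compensated by the additive $+1$ inside the logarithm in the definition of $\widetilde{\NM}_\alpha$, so there is essentially no slack to be gained at that step.
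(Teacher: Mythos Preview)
Your proposal is correct and follows essentially the same approach as the paper: identify $\NM_\alpha$ as the mixture boundary from \cref{th:basic_mixture} with a half-normal mixing distribution, then derive the closed-form upper bound via the Chernoff tail inequality $1-\Phi(x)\leq e^{-x^2/2}$. The only difference is presentational: the paper omits the explicit computation of the mixture integral (taking the formula in \eqref{eq:normal_mixture} as given) and phrases the second step as lower-bounding $m(s,v)$ by $\sqrt{4\rho/(v+\rho)}\,(e^{s^2/(2(v+\rho))}-1)$ and solving, which is algebraically equivalent to your verification that $m(\widetilde{\NM}_\alpha(v),v)\geq l_0/\alpha$.
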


The boundary $\NM_\alpha$ is easily evaluated to high precision by numerical
root-finding, and the closed-form approximation is excellent: numerical
calculations indicate that $\widetilde{\NM}_{0.025}(v) / \NM_{0.025}(v) < 1.007$
uniformly when $\rho = 1$, for example.

\begin{proof}[Proof of \cref{th:normal_mixture}]
\label{sec:proof_normal_mixture}
To obtain the explicit upper bound $\widetilde{\NM}_\alpha$ in
\eqref{eq:normal_mixture_closed} from the exact boundary
\eqref{eq:normal_mixture}, we use the inequality $1 - \Phi(x) \leq e^{-x^2/2}$
for $x > 0$, which follows from a standard Cram\'er-Chernoff bound. This implies
\begin{align}
\sqrt{\frac{4 \rho}{v + \rho}}
  \expebrace{\frac{s^2}{2(v + \rho)}} \Phi\pfrac{s}{\sqrt{v + \rho}}
\geq \sqrt{\frac{4 \rho}{v + \rho}} \ebracket{
  \expebrace{\frac{s^2}{2(v + \rho)}} - 1
},\quad \text{for } s > 0.
\end{align}
We set the RHS equal to $l_0 / \alpha$ and solve to conclude
\begin{align}
\NM_\alpha(v) \leq \sqrt{
  2(v + \rho)
  \log\eparen{\frac{l_0}{2 \alpha} \sqrt{\frac{v + \rho}{\rho}} + 1}
} = \widetilde{\NM}_\alpha(v),
\end{align}
so long as $\NM_\alpha(v) > 0$. But we are guaranteed that $\NM_\alpha(v) > 0$, because the LHS of
the inequality in \eqref{eq:normal_mixture} is increasing in $s$ on $s \geq 0$ and no larger than
one when $s = 0$, while the RHS $l_0/\alpha \geq 1$.

The fact that $\NM_\alpha$ is a sub-Gaussian uniform boundary follows directly
from \cref{th:basic_mixture}, and therefore $\widetilde{\NM}_\alpha$ is as
well.
\end{proof}

When a sub-Bernoulli condition holds, as with bounded observations, the
following beta-binomial boundary is tighter than the normal mixture. Simpler
versions of this boundary have long been studied for i.i.d.\ Bernoulli sampling
\citep{ville_etude_1939, robbins_statistical_1970, lai_confidence_1976,
  shafer_test_2011}. Below, $B_x(a,b) = \int_0^x p^{a-1} (1-p)^{b-1} \d p$
denotes the incomplete Beta function, whose implementation is available in
statistical software packages; $B_1$ is the ordinary Beta function.

\begin{proposition}[Two-sided beta-binomial mixture]
\label{th:two_sided_beta_mixture}
Suppose $(S_t)$ is sub-Bernoulli with variance process $(V_t)$ and range
parameters $g,h$, while $(-S_t)$ is sub-Bernoulli with variance process $(V_t)$
and range parameters $h,g$. Fix any $\rho > gh$, let $r = \rho - gh$, and define
  \begin{align}
  f_{g,h}(v) &~\defineas~
    \sup\ebrace{s \in \left[0, \frac{r+v}{g}\right) :
                m_{g,h}(s, v) < \frac{l_0}{\alpha}},\\
  \text{ where ~}~ m_{g,h}(s,v) &~\defineas~
    \frac{(g+h)^{v/gh}}{\ebracket{g^{v/h + s} h^{v/g - s}}^{1/(g+h)}} \cdot
    \frac{B_1\eparen{\frac{r+v-gs}{g (g+h)}, \frac{r+v+hs}{h(g+h)}}}
         {B_1\eparen{\frac{r}{g(g+h)}, \frac{r}{h(g+h)}}}.
      \label{eq:beta_mixture}
  \end{align}
  Then
  $\P(\forall t \geq 1: -f_{g,h}(V_t) < S_t < f_{h,g}(V_t)) \geq 1 - \alpha$.
\end{proposition}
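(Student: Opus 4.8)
The plan is to recognize $m_{g,h}(s,v)$ as a conjugate mixture in the sense of \cref{th:basic_mixture} and then read the two-sided confidence sequence off the resulting crossing event. The first step is to check that, under the two hypotheses, the exponential process $\expebrace{\lambda S_t - \psi_{B,g,h}(\lambda) V_t}$ is dominated by a nonnegative supermartingale for \emph{every} $\lambda \in \R$, not just $\lambda \geq 0$: for $\lambda \geq 0$ this is exactly the sub-Bernoulli hypothesis on $(S_t)$ with range parameters $g,h$, while for $\lambda = -\mu$ with $\mu > 0$ the elementary identity $\psi_{B,g,h}(-\mu) = \psi_{B,h,g}(\mu)$ rewrites the process as $\expebrace{\mu(-S_t) - \psi_{B,h,g}(\mu) V_t}$, which is dominated by a supermartingale by the sub-Bernoulli hypothesis on $(-S_t)$ with parameters $h,g$. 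Consequently the argument proving \cref{th:basic_mixture} applies verbatim to any probability distribution $F$ on all of $\R$ (the only extra input beyond \cref{th:canonical_assumption} being the same product-measurability condition needed to form $\int L_t(\lambda)\,\d F(\lambda)$ and apply Ville's inequality), yielding $\P\eparen{\exists t \geq 1 : \int_\R \expebrace{\lambda S_t - \psi_{B,g,h}(\lambda) V_t}\,\d F(\lambda) \geq l_0/\alpha} \leq \alpha$ for any such $F$.

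The heart of the proof — and the one genuinely laborious step — is choosing $F$ so that this mixture integral is precisely $m_{g,h}(s,v)$. I would parametrize $\lambda$ by the exponentially tilted probability $p \in (0,1)$ of one of the two atoms, say the atom at $+h$, via the log-odds map $\lambda(p) \defineas \tfrac{1}{g+h}\log\pfrac{hp}{g(1-p)}$, so that $\lambda = 0$ is the centered law ($p = g/(g+h)$) and $p \to 1, 0$ send $\lambda \to \pm\infty$; and I would take $F$ to be the pushforward under $\lambda(\cdot)$ of a $\mathrm{Beta}$ distribution whose two shape parameters are $\tfrac{r}{h(g+h)}$ and $\tfrac{r}{g(g+h)}$, where $r \defineas \rho - gh$. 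The hypothesis $\rho > gh$ is exactly what makes $r > 0$, hence these shape parameters positive. A direct substitution — using $g e^{h\lambda} + h e^{-g\lambda} = (g e^{h\lambda})/p$ and $e^{(g+h)\lambda} = hp/(g(1-p))$ — shows that $\expebrace{\lambda(p) s - \psi_{B,g,h}(\lambda(p)) v}$ equals $\frac{(g+h)^{v/gh}}{[g^{v/h+s}h^{v/g-s}]^{1/(g+h)}}$ times $p^{(v+hs)/(h(g+h))}(1-p)^{(v-gs)/(g(g+h))}$, so integrating against the $\mathrm{Beta}$ density collapses the $p$-integral into the ratio of complete Beta functions in \eqref{eq:beta_mixture}, i.e.\ into $m_{g,h}(s,v)$. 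The domain restriction $s \in [0,(r+v)/g)$ in the statement is then forced and natural: it is exactly the range on which the first Beta argument $\tfrac{r+v-gs}{g(g+h)}$ stays positive, and one has $m_{g,h}(s,v) \to \infty$ both as $s \uparrow (r+v)/g$ and as $s \downarrow -(r+v)/h$.

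Finally I would decode the event $\ebrace{\exists t : m_{g,h}(S_t, V_t) \geq l_0/\alpha}$. Because $F$ charges both $\ebrace{\lambda > 0}$ and $\ebrace{\lambda < 0}$, the map $s \mapsto m_{g,h}(s,v)$ is convex and blows up at both ends of $(-(r+v)/h,\,(r+v)/g)$, so $\ebrace{s : m_{g,h}(s,v) < l_0/\alpha}$ is a bounded open interval; its upper endpoint is $f_{g,h}(v)$ by definition, and its lower endpoint equals $-f_{h,g}(v)$ by the reflection identity $m_{g,h}(-s,v) = m_{h,g}(s,v)$, which follows from the change of variables $p \mapsto 1-p$ swapping the two $\mathrm{Beta}$ shapes and negating $\lambda$. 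Hence $\ebrace{\exists t : m_{g,h}(S_t, V_t) \geq l_0/\alpha}$ is the union of an upper-crossing event and a lower-crossing event for $(S_t)$ with boundaries $f_{g,h}$ and $-f_{h,g}$, and complementing it — relabeling $g \leftrightarrow h$ consistently if needed to match the orientation convention of the statement — gives $\P(\forall t \geq 1 : -f_{g,h}(V_t) < S_t < f_{h,g}(V_t)) \geq 1 - \alpha$. The main obstacle is thus not conceptual but computational: carrying out the mixture integral in closed form and tracking the exponents of $g$, $h$, $p$, and $1-p$ so as to land exactly on $m_{g,h}$, together with the minor care needed to justify the two-sided version of the method of mixtures and its measurability hypothesis.
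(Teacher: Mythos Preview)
Your approach is essentially the paper's own: reparametrize $\lambda$ by the Bernoulli ``mean parameter'' $p$, take a Beta prior on $p$, and evaluate the mixture in closed form. The paper carries this out for the one-sided case (assuming $g+h=1$ for notational convenience) and dispatches the two-sided version in one sentence by saying the argument is ``nearly identical, but we integrate over the full Beta mixture rather than truncating.'' Your proposal spells out exactly the two points the paper leaves implicit: why the exponential process is dominated by a supermartingale for \emph{all} $\lambda\in\R$ (via $\psi_{B,g,h}(-\mu)=\psi_{B,h,g}(\mu)$ and the hypothesis on $(-S_t)$), and how to read off the two endpoints of the sublevel set using the reflection identity $m_{g,h}(-s,v)=m_{h,g}(s,v)$.

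One point to flag: your derivation actually yields the interval $(-f_{h,g}(V_t),\,f_{g,h}(V_t))$, and your ``relabeling $g\leftrightarrow h$ if needed'' is not a legitimate proof step, since the roles of $g$ and $h$ are fixed by the hypotheses. In fact your interval is the correct one: the reflection identity holds as you state, and the domain restriction $f_{g,h}(v)<(r+v)/g$ matches the natural hard ceiling $S_t\le V_t/g$ for $[-g,h]$-valued increments, whereas $f_{h,g}(v)<(r+v)/h$ does not. The subscripts in the displayed conclusion of the proposition (and in its use in \cref{sec:simulations}) appear to be transposed. So keep your derivation as is and drop the relabeling remark; what you have proved is the correct statement.
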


As with the normal mixture, we have a one-sided variant as well.

\begin{proposition}[One-sided beta-binomial mixture]
\label{th:one_sided_beta_mixture}
Fix any $g, h > 0$, $\alpha \in (0,1)$, and $\rho > gh$. Let $r = \rho - gh$ and
define
  \begin{align}
  f_{g,h}(v) &~\defineas~
    \sup\ebrace{s \in \left[0, \frac{r+v}{g}\right) :
                m_{g,h}(s, v) < \frac{l_0}{\alpha}},\\
  \text{ where ~}~ m_{g,h}(s,v) &~\defineas~
    \frac{(g+h)^{v/gh}}{\ebracket{g^{v/h + s} h^{v/g - s}}^{1/(g+h)}} \cdot
    \frac{B_{h/(g+h)}\eparen{\frac{r+v-gs}{g (g+h)}, \frac{r+v+hs}{h(g+h)}}}
         {B_{h/(g+h)}\eparen{\frac{r}{g(g+h)}, \frac{r}{h(g+h)}}}.
      \label{eq:one_sided_beta_mixture}
  \end{align}
  Then $f_{g,h}$ is a sub-Bernoulli uniform boundary with crossing probability
  $\alpha$ and range parameters $g,h$.
\end{proposition}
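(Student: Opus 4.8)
The plan is to recognize $f_{g,h}$ as a conjugate mixture boundary $\Mcal_\alpha$ in the sense of \cref{th:basic_mixture}, applied with $\psi = \psi_{B,g,h}$ and a judiciously chosen mixing distribution $F$ on $[0,\lambda_{\max}) = [0,\infty)$. The right $F$ is a truncated Beta law: let $q$ be drawn from the $\textup{Beta}\eparen{\frac{r}{g(g+h)}, \frac{r}{h(g+h)}}$ distribution conditioned on $\ebrace{q \leq h/(g+h)}$, and let $F$ be the push-forward of this law under $q \mapsto \lambda(q) \defineas \frac{1}{g+h}\log\frac{h(1-q)}{gq}$. This map carries $(0, h/(g+h)]$ onto $[0,\infty)$, with $\lambda = 0$ at $q = h/(g+h)$, so $F$ is genuinely supported on $[0,\infty)$; and the Beta density is integrable near $q = 0$ precisely because $r = \rho - gh > 0$, which is where that hypothesis enters. (One can reverse-engineer this $F$ by matching against the desired answer $m_{g,h}$, but for the write-up it is cleanest to posit $F$ and verify.)

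Next I would verify that $m(s,v) \defineas \int \expebrace{\lambda s - \psi_{B,g,h}(\lambda) v}\,\d F(\lambda)$ equals $m_{g,h}(s,v)$. Writing $p = 1-q$ and using $e^{(g+h)\lambda} = \frac{h}{g}\cdot\frac{p}{1-p}$, a short computation gives $g e^{h\lambda} + h e^{-g\lambda} = g^{g/(g+h)} h^{h/(g+h)}\, p^{-g/(g+h)}(1-p)^{-h/(g+h)}$ and $e^{\lambda s} = \left(\frac{hp}{g(1-p)}\right)^{s/(g+h)}$. Substituting into $\psi_{B,g,h}(\lambda) = \frac{1}{gh}\log\frac{g e^{h\lambda} + h e^{-g\lambda}}{g+h}$ shows that $\expebrace{\lambda(q) s - \psi_{B,g,h}(\lambda(q)) v}$ equals the constant $\frac{(g+h)^{v/gh}}{[g^{v/h + s} h^{v/g - s}]^{1/(g+h)}}$ times $p^{(v+hs)/(h(g+h))} (1-p)^{(v-gs)/(g(g+h))}$, and this constant is exactly the prefactor in $m_{g,h}$. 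Multiplying by the truncated-Beta density $\propto q^{r/(g(g+h)) - 1}(1-q)^{r/(h(g+h)) - 1}$ and integrating $q$ over $(0, h/(g+h)]$, the powers of $q = 1-p$ and of $p = 1-q$ combine into exactly $B_{h/(g+h)}\eparen{\frac{r+v-gs}{g(g+h)}, \frac{r+v+hs}{h(g+h)}}$, while the normalizing constant of $F$ supplies the denominator $B_{h/(g+h)}\eparen{\frac{r}{g(g+h)}, \frac{r}{h(g+h)}}$. Hence $m(s,v) = m_{g,h}(s,v)$ for every $s$.

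It then remains to reconcile the domain and invoke the mixture lemma. For $s \geq (r+v)/g$ the first argument of the incomplete Beta is nonpositive, so the defining integral diverges and $m_{g,h}(s,v) = +\infty$; meanwhile $m_{g,h}(0,v) = \int \expebrace{-\psi_{B,g,h}(\lambda) v}\,\d F(\lambda) \leq 1 < l_0/\alpha$ since $\psi_{B,g,h} \geq 0$ on $[0,\infty)$. Because $s \mapsto m(s,v)$ is nondecreasing (indeed increasing to $+\infty$ as $s \uparrow (r+v)/g$), the boundary $\Mcal_\alpha(v) = \sup\ebrace{s \in \R: m(s,v) < l_0/\alpha}$ lies in $[0, (r+v)/g)$ and therefore coincides with $f_{g,h}(v)$. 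Finally, in the scalar sub-Bernoulli case $L_t(\lambda) = \expebrace{\lambda S_t - \psi_{B,g,h}(\lambda) V_t}$ is jointly measurable in $(\lambda, \omega)$, so the product-measurability hypothesis of \cref{th:basic_mixture} holds, and that lemma yields the conclusion: $f_{g,h}$ is a sub-Bernoulli uniform boundary with range parameters $g, h$ and crossing probability $\alpha$.

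The only genuine subtlety is lining up the truncation $q \leq h/(g+h)$ (equivalently, mixing only over $\lambda \geq 0$) with the appearance of the \emph{incomplete} Beta function $B_{h/(g+h)}$, in contrast to the complete Beta $B_1$ in the two-sided variant \cref{th:two_sided_beta_mixture}, and noting that $r > 0$ is exactly the condition needed for $F$ to be well-defined. Everything else is bookkeeping of exponents in the change of variables.
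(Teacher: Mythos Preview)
Your proposal is correct and follows essentially the same approach as the paper: both identify the boundary as a conjugate mixture $\Mcal_\alpha$ via \cref{th:basic_mixture} with $\psi = \psi_{B,g,h}$, using the change of variables from $\lambda$ to a Beta-distributed parameter and truncating to $\lambda \geq 0$ to obtain the incomplete Beta function. The only cosmetic differences are that the paper first normalizes to $g+h=1$ and parametrizes by $p = 1-q$ rather than your $q$; your additional remarks on why $r>0$ ensures integrability and why the supremum is naturally confined to $[0,(r+v)/g)$ are helpful elaborations not spelled out in the paper.
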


In the sub-Bernoulli case, we first rewrite the exponential process
$\expebrace{\lambda S_t - \psi_B(\lambda) V_t}$ in terms of the transformed
parameter $p = [1 + (h/g)e^{-\lambda}]^{-1}$. This is motivated by the transform
from the canonical parameter to the mean parameter of a Bernoulli family, but
keep in mind that we make no parametric assumption here, these are merely
analytical manipulations. Then a truncated Beta distribution on
$p \in [g/(g+h), 1]$ yields the one-sided beta-binomial uniform boundary, while
an untruncated mixture yields the two-sided boundary.

\begin{proof}[Proof of \cref{th:one_sided_beta_mixture,th:two_sided_beta_mixture}]
\label{sec:proof_beta_mixture}

For simplicity of notation, we will assume here that the problem has been scaled
so that $g + h = 1$, e.g., by replacing $X_t$ with $X_t / (g + h)$. Using the
sub-Bernoulli $\psi$ function
$\psi_B(\lambda) = \frac{1}{gh} \log\eparen{ge^{h\lambda} + he^{-g\lambda}}$,
the exponential integrand in our mixture is
\begin{align}
  \expebrace{
    \lambda s - \frac{v}{gh} \log\eparen{ge^{h\lambda} + he^{-g\lambda}}}
  = \frac{p^{v/h + s} (1-p)^{v/g - s}}{g^{v/h+s} h^{v/g-s}},
\end{align}
after substituting the one-to-one transformation
\begin{align}
  p = p(\lambda) \defineas \frac{ge^{h\lambda}}{ge^{h\lambda} + he^{-g\lambda}},
  \quad \text{so that} \quad
  \lambda = \log\pfrac{ph}{(1-p) g},
\end{align}
followed by some algebra. We wish to integrate against a Beta mixture density on
$p$ with parameters $r/h$ and $r/g$, which has mean $p = g$, corresponding to
$\lambda = 0$. For \cref{th:one_sided_beta_mixture}, we must also truncate to
$\lambda \geq 0$, i.e., to $p \geq g$. The appropriately normalized mixture
integral is then
\begin{align}
  \frac{1}{g^{v/h+s} h^{v/g-s}} \cdot
  \frac{\int_g^1 p^{v/h + s + r/h-1} (1-p)^{v/g - s + r/g-1} \d p}
       {\int_g^1 p^{r/h-1} (1-p)^{r/g-1} \d p}
  = \frac{1}{g^{v/h+s} h^{v/g-s}} \cdot
    \frac{B_h\eparen{\frac{r+v}{g} - s, \frac{r+v}{h} + s}}
         {B_h\eparen{\frac{r}{g}, \frac{r}{h}}},
\end{align}
using the fact that
$B_x(a, b) = \int_0^x p^{a-1} (1-p)^{b-1} \d p = \int_{1-x}^1 p^{b-1} (1 -
p)^{a-1} \d p$. This gives the closed-form mixture
\eqref{eq:one_sided_beta_mixture}. (To obtain the formula for general
$g+h \neq 1$, substitute $g/(g+h)$ for $g$, $h/(g+h)$ or $h$, $s/(g+h)$ for $s$,
$v/(g+h)^2$ for $v$, and $r/(g+h)^2$ for $r$.)

The proof of \cref{th:two_sided_beta_mixture} is nearly identical, but we
integrate over the full Beta mixture rather than truncating.

To verify that our choice of $r$ ensures that $\lambda$ has approximate
precision $\rho$ under the full (not truncated) mixture distribution, we use the
delta method to calculate the approximate variance of $\lambda$ for large $r$
based on the variance of $p$ under the full Beta mixture:
\begin{align}
  \Var \lambda \approx
  \ebracket{\pfrac{1}{p(1-p)}^2}_{p = g}
    \cdot \frac{gh}{\frac{r}{gh} + 1}
  = \frac{1}{r+gh}.
\end{align}
Setting this equal to $1/\rho$ yields $r = \rho - gh$ as desired.
\end{proof}

When tails are heavier than Gaussian, the normal mixture boundary is not
applicable. However, the following sub-exponential mixture boundary, based on a
gamma mixing density, is universally applicable, as described in
\cref{th:universal}.  Like the normal mixture, the gamma-exponential mixture is
unimprovable as described in \cref{sec:admissibility}. Below we make use of the
regularized lower incomplete gamma function
$\gamma(a,x) \defineas (\int_0^x u^{a-1} e^{-u} \d u) / \Gamma(a)$, available in
standard statistical software packages.

\begin{proposition}[Gamma-exponential mixture]\label{th:gamma_mixture}
  Fix $c > 0, \rho > 0$ and define
  \begin{align}
  \GE_\alpha(v) &~\defineas~
    \sup\ebrace{s \geq 0 : m(s, v) < \frac{l_0}{\alpha}},\\
  \text{ where ~}~ m(s,v) &~\defineas~
    \frac{\pfrac{\rho}{c^2}^{\frac{\rho}{c^2}}}
      {\Gamma\pfrac{\rho}{c^2}
           \gamma\eparen{\frac{\rho}{c^2}, \frac{\rho}{c^2}}}
    \frac{\Gamma\pfrac{v+\rho}{c^2}
           \gamma\eparen{\frac{v+\rho}{c^2}, \frac{cs+v+\rho}{c^2}}}
      {\pfrac{cs+v+\rho}{c^2}^{\frac{v+\rho}{c^2}}}
    \expebrace{\frac{cs+v}{c^2}}.
      \label{eq:gamma_mixture}
  \end{align}
Then $\GE_\alpha$ is a sub-exponential uniform boundary
  with crossing probability $\alpha$ for scale $c$.
\end{proposition}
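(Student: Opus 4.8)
The plan is to invoke the method of mixtures, \cref{th:basic_mixture}, with $\psi = \psi_{E,c}$ and a carefully chosen mixing distribution $F$ supported on $[0, 1/c)$, the interval on which $\psi_{E,c}$ is finite. Expanding the exponent gives $\lambda s - \psi_{E,c}(\lambda) v = \lambda(s + v/c) + (v/c^2)\log(1 - c\lambda)$, so the mixture integrand in \eqref{eq:mixture_operator} equals $(1 - c\lambda)^{v/c^2}\expebrace{\lambda(s + v/c)}$. This is the likelihood-ratio form attached to a centered, scaled exponential family in its rate parametrization, whose conjugate mixing law is a gamma law --- hence the name. Concretely I would take $F$ to have density $f(\lambda) \propto (1 - c\lambda)^{\rho/c^2 - 1}\expebrace{-\rho(1 - c\lambda)/c^2}$ on $(0, 1/c)$; equivalently, under $F$ the quantity $p \defineas 1 - c\lambda \in (0,1)$ is distributed so that $\rho p / c^2$ is a $\mathrm{Gamma}(\rho/c^2)$ variable truncated to $(0, \rho/c^2)$.

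First I would fix the normalizing constant: the substitution $p = 1 - c\lambda$ turns $\int_0^{1/c} f(\lambda)\,\mathrm{d}\lambda = 1$ into a lower-incomplete-gamma integral, forcing the constant to equal $(\rho/c^2)^{\rho/c^2} / [\Gamma(\rho/c^2)\gamma(\rho/c^2, \rho/c^2)]$, which is precisely the leading factor of $m(s,v)$ in \eqref{eq:gamma_mixture}. Then the same substitution $p = 1 - c\lambda$ in $m(s,v) = \int \expebrace{\lambda s - \psi_{E,c}(\lambda) v}\,\mathrm{d}F(\lambda)$ collapses the $(1-c\lambda)^{v/c^2}$ factor and the exponential into $p^{v/c^2}\expebrace{-p(cs+v)/c^2}$ times the constant $\expebrace{(cs+v)/c^2}$; combining with the density and setting $a = (v+\rho)/c^2$ and $b = (cs+v+\rho)/c^2$ reduces $m(s,v)$ to a constant times $\expebrace{(cs+v)/c^2}\int_0^1 p^{a-1}\expebrace{-bp}\,\mathrm{d}p = \expebrace{(cs+v)/c^2}\, b^{-a}\Gamma(a)\gamma(a,b)$. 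Substituting back the definitions of $a$ and $b$ and collecting terms yields exactly the closed form \eqref{eq:gamma_mixture}. With $m(s,v)$ so identified, \cref{th:basic_mixture} immediately gives that $\GE_\alpha$ is a sub-$\psi_{E,c}$ (that is, sub-exponential, scale $c$) uniform boundary with crossing probability $\alpha$; the product-measurability hypothesis there is automatic in this scalar case because $L_t(\lambda) = \expebrace{\lambda S_t - \psi_{E,c}(\lambda) V_t}$ is jointly continuous in $(\lambda,\omega)$ through $S_t$ and $V_t$.

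The only genuinely nonmechanical choice is the shape of $F$, which must be tuned to the intrinsic-time scale where the boundary is tight; I would confirm, by a delta-method calculation analogous to the one in the proof of \cref{th:two_sided_beta_mixture}, that $\rho$ acts as an approximate precision for $\lambda$ under $F$, consistent with the $c \downarrow 0$ limit recovering the one-sided normal mixture of \cref{th:normal_mixture}. I do not anticipate any real obstacle: the substance is the bookkeeping in the two incomplete-gamma reductions and verifying that $F$ is genuinely a probability measure on $[0, 1/c)$ so that \cref{th:basic_mixture} applies; everything else is automatic, and the universal applicability mentioned after the statement then follows from \cref{th:universal}.
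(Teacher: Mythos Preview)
Your proposal is correct and takes essentially the same approach as the paper: both invoke \cref{th:basic_mixture} with the same truncated-gamma mixing density on $[0,1/c)$ (written in terms of $c^{-1}-\lambda$ in the paper, $1-c\lambda$ in yours), and both reduce the mixture integral to a lower incomplete gamma via the natural substitution. The only cosmetic difference is that the paper does the change of variables in one step, $u = \frac{cs+v+\rho}{c}(c^{-1}-\lambda)$, whereas you split it into $p = 1-c\lambda$ followed by $w = bp$; also note your stated normalizing constant is off by a factor of $c$ that cancels against the Jacobian $\mathrm{d}\lambda = c^{-1}\mathrm{d}p$, so the final formula still matches \eqref{eq:gamma_mixture} exactly.
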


The gamma-exponential mixture is the result of evaluating the mixture integral
in \eqref{eq:mixture_operator} with mixing density
\begin{align}
\frac{\d F}{\d \lambda} =
  \frac{1}{\gamma(\rho/c^2, \rho/c^2)}
  \frac{(\rho/c)^{\rho/c^2}}{\Gamma(\rho/c^2)}
  (c^{-1} - \lambda)^{\rho/c^2-1}
  e^{-\rho(c^{-1}-\lambda)/c}.
\end{align}
This is a gamma distribution with shape $\rho/c^2$ and scale $\rho/c$ applied to
the transformed parameter $u = c^{-1} - \lambda$, truncated to the support
$[0,c^{-1}]$. The distribution has mean zero and variance equal to $1/\rho$,
making it comparable to the normal mixture distribution used above. As
$\rho \to \infty$, the gamma mixture distribution converges to a normal
distribution and concentrates about $\lambda = 0$, the regime in which
$\psi_E(\lambda) \sim \psi_N(\lambda)$, which gives some intuition for why the
gamma-exponential mixture recovers the normal mixture when $\rho \gg c^2$.

\begin{proof}[Proof of \cref{th:gamma_mixture}]
We need only show that
\begin{align}
  m(s, v) &= \int_0^{1/c} \expebrace{\lambda s - \psi_E(\lambda) v}
             f(\lambda) \d \lambda,
            \label{eq:gamma_mixture_integral} \\
  \text{where } f(\lambda) &=
    \frac{1}{\gamma(\rho/c^2, \rho/c^2)}
    \frac{(\rho/c)^{\rho/c^2}}{\Gamma(\rho/c^2)}
    (c^{-1} - \lambda)^{\rho/c^2-1}
    e^{-\rho(c^{-1}-\lambda)/c}.
\end{align}
Then the fact that $\GM_\alpha$ is a sub-exponential uniform boundary follows as
a special case of \cref{th:basic_mixture}.

Proving \eqref{eq:gamma_mixture_integral} is an exercise in
calculus. Substituting the definition of $\psi_E$ and removing common terms, it
suffices to show that
\begin{align}
c^{-\rho/c^2}
  \frac{\Gamma\pfrac{v+\rho}{c^2}
        \gamma\eparen{\frac{v+\rho}{c^2}, \frac{cs+v+\rho}{c^2}}}
       {\pfrac{cs+v+\rho}{c^2}^{\frac{v+\rho}{c^2}}}
  e^{(cs+v)/c^2}
= \int_0^{1/c} (1-c\lambda)^{v/c^2} e^{\lambda(s+v/c)}
    (c^{-1}-\lambda)^{\rho/c^2-1} e^{-\rho(c^{-1}-\lambda)/c} \d \lambda.
\end{align}
After change of variables $u = \pfrac{cs + v + \rho}{c}(c^{-1} - \lambda)$, the
right-hand side is equal to
\begin{align}
\pfrac{cs+v+\rho}{c}^{-\frac{v+\rho}{c^2}} c^{v/c^2} e^{(cs+v)/c^2}
  \int_0^{(cs+v+\rho)/c^2} u^{(v+\rho)/c^2-1} e^{-u} \d u.
\end{align}
Now the definition of the regularized lower incomplete gamma function and a bit
of algebra finishes the argument.
\end{proof}

A similar mixture boundary holds in the sub-Poisson case,
making use of the regularized upper incomplete gamma function
$\uppergamma(a, x) \defineas (\int_x^\infty u^{a-1} e^{-u} \d u) / \Gamma(a)$.
\begin{proposition}[Gamma-Poisson mixture]\label{th:gamma_poisson_mixture}
  Fix $c > 0, \rho > 0$ and define
  \begin{align}
  \GP_\alpha(v) &~\defineas~
    \sup\ebrace{s \geq 0 : m(s, v) < \frac{l_0}{\alpha}},\\
  \text{ where ~}~ m(s,v) &~\defineas~
    \frac{\pfrac{\rho}{c^2}^{\rho/c^2}}
      {\Gamma\pfrac{\rho}{c^2}
           \uppergamma\eparen{\frac{\rho}{c^2}, \frac{\rho}{c^2}}}
    \frac{\Gamma\pfrac{cs+v+\rho}{c^2}
           \uppergamma\eparen{\frac{cs+v+\rho}{c^2}, \frac{v+\rho}{c^2}}}
      {\pfrac{v+\rho}{c^2}^{(cs+v+\rho)/c^2}}
    \expebrace{\frac{v}{c^2}}.
      \label{eq:gamma_poisson_mixture}
  \end{align}
  Then $\GP_\alpha$ is a sub-Poisson uniform boundary with crossing probability
  $\alpha$ for scale $c$.
\end{proposition}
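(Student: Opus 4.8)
The plan is to derive \cref{th:gamma_poisson_mixture} as a special case of the method of mixtures, \cref{th:basic_mixture}, exactly as \cref{th:gamma_mixture} was derived in the sub-exponential case; the only work is to exhibit the correct mixing distribution $F$ on $[0,\lambda_{\max}) = [0,\infty)$ and to check that evaluating the mixture integral $m(s,v) = \int \expebrace{\lambda s - \psi_{P,c}(\lambda) v}\,\d F(\lambda)$ in closed form reproduces \eqref{eq:gamma_poisson_mixture}. Given that, \cref{th:basic_mixture} yields immediately that $\GP_\alpha$ is a sub-Poisson uniform boundary with crossing probability $\alpha$ for scale $c$; the product-measurability hypothesis of \cref{th:basic_mixture} is satisfied because, as in every scalar case, $L_t(\lambda) = \expebrace{\lambda S_t - \psi_{P,c}(\lambda) V_t}$ is jointly measurable in $(\lambda,\omega)$.

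The mixing distribution is the law of $\lambda = c^{-1}\log U$, where $U$ has a Gamma distribution with shape $\rho/c^2$ and rate $\rho/c^2$ conditioned on $\{U \ge 1\}$; after this change of variables it is the density
\[
  \frac{\d F}{\d\lambda} = \frac{c\,(\rho/c^2)^{\rho/c^2}}{\Gamma(\rho/c^2)\,\uppergamma(\rho/c^2,\rho/c^2)}\,\expebrace{\frac{\rho\lambda}{c} - \frac{\rho}{c^2}\,e^{c\lambda}}, \qquad \lambda \ge 0,
\]
where the normalization uses $\uppergamma(\rho/c^2,\rho/c^2) = \P(U \ge 1)$. This is the natural sub-Poisson counterpart of the truncated-gamma density used for the gamma-exponential mixture \eqref{eq:gamma_mixture}; like it, it has mean approximately $0$ and variance $1/\rho$ by the delta method, so the tuning parameter $\rho$ again acts as an approximate precision for $\lambda$, and the boundary reduces to the normal mixture in the regime $\rho \gg c^2$.

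First I would substitute $\psi_{P,c}(\lambda) = c^{-2}(e^{c\lambda} - c\lambda - 1)$ into the integrand, pull the constant factor $e^{v/c^2}$ outside the integral, and collect the $\lambda$-dependence of the integrand together with that of $\d F/\d\lambda$ into $\expebrace{\lambda(s + (v+\rho)/c) - \tfrac{v+\rho}{c^2}e^{c\lambda}}$. The substitution $u = e^{c\lambda}$ (so $u$ ranges over $[1,\infty)$ and $\d\lambda = \d u/(cu)$, with the $1/c$ cancelling the $c$ in the density's normalization) turns the integral into a multiple of $\int_1^\infty u^{(cs+v+\rho)/c^2 - 1}\,e^{-(v+\rho)u/c^2}\,\d u$, and a further substitution $y = \tfrac{v+\rho}{c^2}u$ identifies this with $\big(\tfrac{v+\rho}{c^2}\big)^{-(cs+v+\rho)/c^2}\,\Gamma\!\big(\tfrac{cs+v+\rho}{c^2}\big)\,\uppergamma\!\big(\tfrac{cs+v+\rho}{c^2},\tfrac{v+\rho}{c^2}\big)$ by the definition of the regularized upper incomplete gamma function. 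Reassembling with the normalizing constants and the $e^{v/c^2}$ factor gives \eqref{eq:gamma_poisson_mixture} verbatim.

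There is no real conceptual obstacle here — the statement is a direct parallel of \cref{th:gamma_mixture} — so the only thing demanding care is the bookkeeping of the various $c^2$ scalings and of the truncation normalization, so that the arguments and exponents of the incomplete-gamma functions come out exactly as written; one should also confirm that $\d F/\d\lambda$ is genuinely a probability density on $[0,\infty)$, which holds by construction as the pushforward of a truncated gamma, so that \cref{th:basic_mixture} applies without modification.
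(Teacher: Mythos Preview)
Your proposal is correct and follows essentially the same route as the paper: the paper also applies \cref{th:basic_mixture} with a gamma mixing distribution on $\theta = e^{c\lambda}$ having shape and rate both equal to $\rho/c^2$, truncated to $\theta \geq 1$, and evaluates the resulting integral via the upper incomplete gamma function to obtain \eqref{eq:gamma_poisson_mixture}. The only cosmetic difference is that the paper works directly in the $\theta$-variable from the outset rather than first writing the density in $\lambda$-space and then substituting back, but the computation and the mixing distribution are identical.
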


\begin{proof}[Proof of \cref{th:gamma_poisson_mixture}]
\label{sec:proof_gamma_poisson_mixture}

The proof follows the same contours as that of
\cref{th:one_sided_beta_mixture}. Using the sub-Poisson $\psi$ function
$\psi_P(\lambda) = c^{-2} (e^{c\lambda} - c\lambda - 1)$, the exponential
integrand in our mixture is
\begin{align}
  \expebrace{\lambda s - v \pfrac{e^{c\lambda} - c\lambda - 1}{c^2}}
  = \theta^{(cs+v)/c^2} e^{(1-\theta) v/c^2},
\end{align}
after substituting the one-to-one transformation
$\theta = \theta(\lambda) \defineas e^{c\lambda}$, so that
$\lambda = c^{-1} \log \theta$. We integrate against a gamma mixing distribution
on $\theta$ with shape and scale parameters both equal to
$\beta \defineas \rho / c^2$, truncated to $\theta \geq 1$, so that
$\lambda \geq 0$:
\begin{align}
  e^{v/c^2} \,
  \frac{\int_1^\infty \theta^{(cs+v+\rho)/c^2-1} e^{-(v+\rho)\theta/c^2}
        \d \theta}
       {\int_1^\infty \theta^{\rho/c^2-1} e^{-\rho\theta/c^2} \d \theta}
  = \frac{\pfrac{\rho}{c^2}^{\rho/c^2}}{\Gamma\pfrac{\rho}{c^2}}
    \cdot
    \frac{\Gamma\pfrac{cs+v+\rho}{c^2}}{\pfrac{v+\rho}{c^2}^{(cs+v+\rho)/c^2}}
    \cdot
    \frac{\uppergamma\eparen{\frac{cs+v+\rho}{c^2}, \frac{v+\rho}{c^2}}}
         {\uppergamma\eparen{\frac{\rho}{c^2}, \frac{\rho}{c^2}}}
    \expebrace{\frac{v}{c^2}}.
\end{align}
This yields the closed-form mixture \eqref{eq:gamma_poisson_mixture}. To verify
that our choice of $\beta$ ensures that $\lambda$ has approximate precision
$\rho$ under the full (not truncated) mixture distribution, we use the delta
method to calculate the approximate variance of $\lambda$ for large $\beta$
based on the variance of $\theta$ under the full gamma mixture:
\begin{align}
  \Var \lambda \approx
    \ebracket{\frac{1}{c^2 \theta^2}}_{\theta = 1} \cdot \frac{1}{\beta}
    = \frac{1}{\rho}.
\end{align}
\end{proof}

\subsection{Proof of \cref{th:mixture_rate}}\label{sec:proof_mixture_rate}

Under the conditions of \cref{th:mixture_rate}, we have
\begin{align}
  m(s,v) = \int_0^{\lambda_{\max}}
    \expebrace{\lambda s - \psi(\lambda) v} f(\lambda) \d \lambda.
\end{align}
Note that $m(s,v)$ is nondecreasing in $s$ and nonincreasing in $v$ (since
$\psi \geq 0$ by our assumptions on $\psi$).

Choose $\delta \in (0, \lambda_{\max})$ so that $\psi$ has three continuous
derivatives and $f$ is continuous and positive on $[0,\delta)$; such a value of
$\delta$ must exist by conditions (i) and (ii). Before proving
\cref{th:mixture_rate}, we state several lemmas.

\begin{lemma}\label{th:mixture_linear}
  Under the conditions of \cref{th:mixture_rate}, for any
  $b \in (0, \psi'(\delta))$, we have $m(bv, v) < \infty$ and
  $m(bv, v) \to \infty$ as $v \to \infty$.
\end{lemma}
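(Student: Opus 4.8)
The plan is to work directly with the integral representation
\[
  m(bv,v)=\int_0^{\lambda_{\max}} \expebrace{v\,g(\lambda)}\,f(\lambda)\,\d\lambda,
  \qquad g(\lambda)\defineas b\lambda-\psi(\lambda),
\]
and to treat the two assertions separately: a finite, $v$-independent bound on the integrand's growth (giving $m(bv,v)<\infty$), and a lower bound restricted to a neighborhood of the origin that diverges as $v\to\infty$.

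For finiteness, the first step is to show that $g$ is bounded above on $[0,\lambda_{\max})$. Here I would use that $\psi$ is convex — this holds for the cumulant generating function of any mean-zero random variable and, in particular, for each of the five $\psi$ functions of \cref{sec:prelims} (for $\psi_G$ one checks $\psi_G''(\lambda)=(1-c\lambda)^{-3}>0$) — and it is precisely convexity that makes the hypothesis $b\in(0,\psi'(\delta))$ informative about large $\lambda$. Since $b<\psi'(\delta)$ and $\psi'$ is continuous on $[0,\delta)$, there is $\delta'\in(0,\delta)$ with $\psi'(\delta')>b$; the supporting-line inequality for the convex function $\psi$ then gives $\psi(\lambda)\ge \psi(\delta')+\psi'(\delta')(\lambda-\delta')$ for every $\lambda\in[0,\lambda_{\max})$, so
\[
  g(\lambda)\le \bigl(b-\psi'(\delta')\bigr)\lambda+\bigl(\psi'(\delta')\delta'-\psi(\delta')\bigr)\le M\defineas \psi'(\delta')\delta'-\psi(\delta'),
\]
using $b-\psi'(\delta')<0$ and $\lambda\ge 0$. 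Since $f$ is a density, it follows that $m(bv,v)\le e^{vM}\int_0^{\lambda_{\max}}f(\lambda)\,\d\lambda=e^{vM}<\infty$ for every $v$.

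For the divergence I would localize the integral near the origin. Because $\psi(0)=0$ and $\psi'(0_+)=0$, we have $\psi(\lambda)/\lambda\to 0$, hence $g(\lambda)/\lambda=b-\psi(\lambda)/\lambda\to b>0$ as $\lambda\downarrow 0$; so there is $\epsilon\in(0,\lambda_{\max})$ with $g(\lambda)\ge b\lambda/2$ on $(0,\epsilon]$ and, shrinking $\epsilon$ if necessary, with $f(\lambda)\ge f(0)/2>0$ on $[0,\epsilon]$ by continuity and positivity of $f$ at the origin. Then
\[
  m(bv,v)\ \ge\ \int_0^{\epsilon}\expebrace{v g(\lambda)}f(\lambda)\,\d\lambda\ \ge\ \frac{f(0)}{2}\int_0^{\epsilon}e^{vb\lambda/2}\,\d\lambda\ =\ \frac{f(0)}{bv}\bigl(e^{vb\epsilon/2}-1\bigr),
\]
and the right-hand side tends to $\infty$ as $v\to\infty$, which finishes the argument.

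The hard part is the finiteness claim: with only monotonicity of $\psi$, the function $g=b\lambda-\psi$ need not be bounded above and $m(bv,v)$ can in fact be infinite for large $v$ (take $\psi(\lambda)=(1-e^{-\lambda})^2$ and $f$ with an exponential tail). So the argument has to lean on convexity of $\psi$ — equivalently, on $\psi$ steepening toward $\lambda_{\max}$ — in combination with the hypothesis $b<\psi'(\delta)$, via the supporting-line bound above. The divergence half, by contrast, is routine once the integral is restricted to a neighborhood of the origin.
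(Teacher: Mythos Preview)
Your argument is correct. Both halves work, and your observation about convexity is apt: the finiteness claim genuinely needs $\psi'$ to stay above $b$ beyond $\delta$, which follows from convexity but not from the literal hypotheses of \cref{th:mixture_rate} (nondecreasing $\psi$ with local smoothness). The paper's proof uses this same fact without comment, simply asserting $b-\psi'(\lambda)<0$ for all $\lambda\ge\delta$; you are more explicit about where the extra structure enters.

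The approaches differ mainly in the divergence half. The paper restricts the integral to $[0,\delta)$ and invokes Laplace's asymptotic expansion (citing Widder) to obtain $\int_0^\delta e^{vg(\lambda)}f(\lambda)\,\d\lambda\sim Ce^{v\psi^\star(b)}/\sqrt{v}$, from which divergence follows since $\psi^\star(b)>0$. Your argument is more elementary: you use only $g(\lambda)\ge b\lambda/2$ near the origin and integrate the exponential explicitly. This avoids the external reference and the regularity needed for Laplace's method, at the cost of not identifying the precise exponential rate --- which the lemma does not require. For the finiteness half, the paper bounds the integrand by $e^{vb\delta}$ on $[0,\delta]$ and argues it is decreasing on $[\delta,\lambda_{\max})$; your supporting-line bound is a cleaner way to package the same convexity input into a single global bound $g\le M$.
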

\begin{proof}
  Observe
  \begin{align}
    m(bv, v) = \int_0^{\lambda_{\max}}
      \expebrace{v [\lambda b - \psi(\lambda)]} f(\lambda) \d \lambda.
  \end{align}
  Note
  $\frac{\d}{\d \lambda}\ebracket{\lambda b - \psi(\lambda)} = b -
  \psi'(\lambda) < 0$ for all $\lambda \geq \delta$ by our condition on
  $b$. Hence the integrand $\expebrace{v [\lambda b - \psi(\lambda)]}$ is
  decreasing on $\lambda \geq \delta$ and bounded above by
  $e^{v \delta b}$ on $\lambda \leq \delta$ (since $\psi \geq 0$). The
  integrand is therefore uniformly bounded on $[0, \lambda_{\max})$, so that
  $m(bv, v) < \infty$.

  Now Laplace's asymptotic approximation \citep[Chapter VII.2, Theorem
  2b]{widder_laplace_1942} yields
  \begin{align}
    \int_0^\delta \expebrace{v \ebracket{\lambda b - \psi(\lambda)}}
      f(\lambda) \d \lambda
    \sim \frac{C e^{v \psi^\star(b)}}{\sqrt{v}},
    \quad \text{ as } v \to \infty, \label{eq:laplace_expansion}
  \end{align}
  where $C > 0$ is a constant not depending on $v$. (The condition
  $b < \psi'(\delta)$ ensures that the maximizer of $\lambda b - \psi(\lambda)$
  lies within $[0, \delta)$.) Since the LHS of \eqref{eq:laplace_expansion}
  lower bounds $m(bv, v)$ while the RHS diverges as $v \to \infty$, we must have
  $m(bv, v) \to \infty$ as $v \to \infty$.
\end{proof}

\begin{lemma}\label{th:mixture_root}
  Under the conditions of \cref{th:mixture_rate},
  $m(\Mcal_\alpha(v), v) = l_0/\alpha$ for all $v$ sufficiently large.
\end{lemma}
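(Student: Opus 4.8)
The plan is to show that, for all sufficiently large $v$, the set $A_v \defineas \{s \in \R : m(s,v) < l_0/\alpha\}$ appearing in the definition of $\Mcal_\alpha(v)$ is a nonempty interval, bounded above, on which $m(\cdot,v)$ is finite and continuous; then $\Mcal_\alpha(v) = \sup A_v$ is an interior point of the domain of continuity, and it equals the threshold by a one-line intermediate-value argument.

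First I would collect the elementary facts. Recall $m(s,v)$ is nondecreasing in $s$ (the exponent $\lambda s - \psi(\lambda) v$ is nondecreasing in $s$ since $\lambda \ge 0$) and $m(0,v) = \int e^{-\psi(\lambda)v}\,dF(\lambda) \le 1 < \infty$. Fix any $b \in (0,\psi'(\delta))$, which is a nonempty range since $\psi''(0_+) = c > 0$ forces $\psi'$ to be positive just to the right of the origin. By \cref{th:mixture_linear}, $m(bv,v) < \infty$, so monotonicity gives $m(s,v) \le m(bv,v) < \infty$ for every $s \in [0,bv]$, and on that range the integrand $\exp\{\lambda s - \psi(\lambda) v\}$ is dominated by the $F$-integrable envelope $\exp\{\lambda bv - \psi(\lambda) v\}$. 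Dominated convergence then shows $s \mapsto m(s,v)$ is continuous on $[0,bv]$.

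Next I would control the two ends. Since $\psi$ is nondecreasing and strictly positive on $(0,\infty)$ (it is positive just right of $0$ because $\psi''(0_+) > 0$, hence everywhere to the right by monotonicity), dominated convergence gives $m(0,v) \to F(\{0\}) = 0$ as $v \to \infty$, using that $F$ is absolutely continuous; thus $m(0,v) < l_0/\alpha$ for all large $v$, i.e.\ $0 \in A_v$. At the other end, \cref{th:mixture_linear} gives $m(bv,v) \to \infty$, so $m(bv,v) > l_0/\alpha$ for all large $v$; by monotonicity $bv$ is then an upper bound for $A_v$, and continuity rules out $\sup A_v = bv$ (approaching $bv$ from inside $A_v$ would force $m(bv,v) \le l_0/\alpha$), so $\Mcal_\alpha(v) = \sup A_v \in (0,bv)$.

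Finally, for such $v$, choose $s_n \uparrow \Mcal_\alpha(v)$ with $s_n \in A_v$ and $s_n' \downarrow \Mcal_\alpha(v)$ with $s_n' \in (\Mcal_\alpha(v), bv)$, so $s_n' \notin A_v$; continuity of $m(\cdot,v)$ at $\Mcal_\alpha(v)$ yields $m(\Mcal_\alpha(v),v) \le l_0/\alpha$ from the first sequence and $m(\Mcal_\alpha(v),v) \ge l_0/\alpha$ from the second, hence $m(\Mcal_\alpha(v),v) = l_0/\alpha$. I expect the only mildly delicate point to be the continuity of $m(\cdot,v)$ up to $bv$, but the integrable envelope supplied by \cref{th:mixture_linear} handles it cleanly; the hypothesis ``$v$ sufficiently large'' is exactly what rules out the degenerate cases in which $A_v$ is empty or unbounded.
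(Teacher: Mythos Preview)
Your proof is correct and follows essentially the same approach as the paper's: both use \cref{th:mixture_linear} to obtain finiteness of $m(\cdot,v)$ on an interval of the form $[0,bv)$, deduce continuity there via dominated convergence, and combine $m(0,v) < l_0/\alpha$ with $m(bv,v) > l_0/\alpha$ for large $v$ to locate the supremum and conclude by continuity. The only notable difference is that you prove $m(0,v) \to 0$ via dominated convergence and absolute continuity of $F$, whereas the paper simply observes $m(0,v) \leq 1 < l_0/\alpha$ for every $v$; your route works but is more than needed.
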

\begin{proof}
  Let $\Ccal(v) \defineas [0, \psi'(\delta) v)$ for $v >
  0$. \Cref{th:mixture_linear} shows that $m(s, v) < \infty$ for all
  $s \in \Ccal(v)$. Since $m(s,v)$ is nondecreasing in $s$, we may apply
  dominated convergence to find that $s \mapsto m(s, v)$ is continuous at all
  $s \in \Ccal(v)$. Condition (i) of \cref{th:mixture_rate} implies
  $\psi \geq 0$, so that $m(0, v) \leq 1 \leq l_0/\alpha$ for all $v$. Finally,
  \cref{th:mixture_linear} shows that $\sup_{s \in \Ccal(v)} m(s, v) \to \infty$
  as $v \to \infty$. Hence, for $v$ sufficiently large, there exists
  $s \in \Ccal(v)$ such that $m(s, v) > l_0/\alpha$.

  We have argued that, for any sufficiently large $v$,
  $m(0, v) \leq l_0/\alpha < m(\bar{s}, v) < \infty$ for some
  $\bar{s} < \psi'(\delta) v$, and $m(\cdot, v)$ is continuous on
  $[0, \bar{s}]$. The conclusion follows from the definition
  \eqref{eq:mixture_operator} of $\Mcal_\alpha$.
\end{proof}

\begin{lemma}\label{th:M_blows_up}
  Under the conditions of \cref{th:mixture_rate},
  $\lim_{v \to \infty} \Mcal_\alpha(v) = \infty$.
\end{lemma}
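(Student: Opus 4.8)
The plan is to argue by contradiction, reducing the statement to a purely analytic fact about the mixture integrand. Suppose $\Mcal_\alpha(v)$ does not diverge; then there is a finite bound $M$ and a sequence $v_n \uparrow \infty$ with $\Mcal_\alpha(v_n) \le M$ for all $n$. Since $m(\cdot, v)$ is nondecreasing, the set $\{s : m(s,v) < l_0/\alpha\}$ is a left ray with supremum $\Mcal_\alpha(v)$, so taking $s_0 \defineas M+1 > \Mcal_\alpha(v_n)$ forces $m(s_0, v_n) \ge l_0/\alpha > 0$ for every $n$. Hence it suffices to prove the claim that $m(s, v) \to 0$ as $v \to \infty$ for each fixed $s \in \R$, since this contradicts the lower bound just obtained. (Alternatively one could invoke \cref{th:mixture_root} and monotonicity, but the claim is needed in either route.)

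To prove the claim I would split the mixture integral at the cut-point $\delta$ already fixed in the proof of \cref{th:mixture_rate}, so that $\psi$ is thrice continuously differentiable and $f$ continuous and positive on $[0,\delta)$; shrinking $\delta$ if necessary, also assume $\psi'(\delta) > 0$, which is possible since $\psi'(0_+) = 0$ and $\psi''(0_+) = c > 0$. On $[0,\delta)$ we have $e^{\lambda s} \le e^{|s|\delta}$, and $\psi(\lambda) > 0$ for $\lambda \in (0,\delta)$ because $\psi$ is nondecreasing with $\psi''(0_+) = c > 0$; hence $e^{-\psi(\lambda) v} f(\lambda) \downarrow 0$ for a.e.\ $\lambda$ and is dominated by the integrable density $f$, so dominated convergence sends $\int_0^\delta e^{\lambda s - \psi(\lambda) v} f(\lambda)\,\d\lambda$ to zero.

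The delicate part is the tail $\int_\delta^{\lambda_{\max}}$, since $\lambda_{\max}$ may be infinite and $e^{\lambda s}$ then unbounded there; this is the one step I expect to need care. Here I would borrow the integrability already guaranteed by \cref{th:mixture_linear}: fixing $b \in (0,\psi'(\delta))$ and, for the given $s$, a $v_0 > 0$ large enough that $b v_0 \ge s$, that lemma gives $m(b v_0, v_0) = \int_0^{\lambda_{\max}} e^{\lambda b v_0 - \psi(\lambda) v_0} f(\lambda)\,\d\lambda < \infty$. For $v > v_0$, factor $e^{\lambda s - \psi(\lambda) v} = e^{\lambda b v_0 - \psi(\lambda) v_0}\, e^{\lambda(s - b v_0) - \psi(\lambda)(v - v_0)}$; on $[\delta,\lambda_{\max})$ the second factor is at most $e^{-\psi(\delta)(v - v_0)}$, because $\lambda(s - b v_0) \le 0$ and $\psi(\lambda) \ge \psi(\delta) > 0$ by monotonicity. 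Therefore $\int_\delta^{\lambda_{\max}} e^{\lambda s - \psi(\lambda) v} f(\lambda)\,\d\lambda \le e^{-\psi(\delta)(v - v_0)}\, m(b v_0, v_0) \to 0$. Adding the two pieces proves $m(s,v) \to 0$ and closes the contradiction. The only real obstacle is exactly this tail estimate — obtaining decay uniform in $\lambda$ over a possibly unbounded range — and the resolution is to use the monotonicity of $\psi$ to pull a scalar exponential out of an integral whose finiteness \cref{th:mixture_linear} already supplies.
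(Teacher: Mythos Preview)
Your proof is correct and follows the same contradiction strategy as the paper: both reduce to showing $m(s,v) \to 0$ as $v \to \infty$ for fixed $s$, and both lean on \cref{th:mixture_linear} to supply the needed integrability. The execution differs in two minor ways. First, the paper assumes a uniform bound $\Mcal_\alpha(v) \le a$, invokes \cref{th:mixture_root} for the contradiction, and then appends a monotonicity argument to upgrade $\limsup = \infty$ to $\lim = \infty$; your subsequence formulation, together with the direct observation $m(s_0,v_n) \ge l_0/\alpha$ from the definition of the supremum, avoids both of those extra steps. Second, for the claim $m(s,v) \to 0$ the paper applies dominated convergence in a single stroke over all of $[0,\lambda_{\max})$: once \cref{th:mixture_linear} gives a $v_0$ with $m(s,v_0) < \infty$, the integrand is monotone decreasing in $v$ (since $\psi \ge 0$) and dominated by its value at $v_0$, so DCT finishes immediately without splitting the range. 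Your split at $\delta$ with the explicit exponential tail bound is more hands-on but equally valid; the paper's route is shorter, while yours makes the decay mechanism on the possibly unbounded tail transparent.
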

\begin{proof}
  Suppose for the sake of contradiction that there exists $a > 0$ such that
  $\Mcal_\alpha(v) \leq a$ for all $v$. Then, since $m(s,v)$ is nondecreasing in
  $s$, $m(\Mcal_\alpha(v), v) \leq m(a, v)$ for all $v$. But for sufficiently
  large $v$, we can write $a = bv$ for some $b < \psi'(\delta)$, so that
  \cref{th:mixture_linear} implies $m(a, v) < \infty$ for sufficiently large
  $v$. Since condition (i) of \cref{th:mixture_rate} implies $\psi \geq 0$, we
  have $m(s,v)$ is decreasing in $v$, and dominated convergence yields
  $m(a, v) \to 0$ as $v \to \infty$. But this implies
  $m(\Mcal_\alpha(v), v) \to 0$, contradicting \cref{th:mixture_root}.

  We have shown that $\limsup_{v \to \infty} \Mcal_\alpha(v) = \infty$. But
  since $m(s,v)$ is nondecreasing in $s$ and nonincreasing in $v$,
  \cref{th:mixture_root} implies $\Mcal_\alpha(v)$ must be nondecreasing in
  $v$. It follows that $\lim_{v \to \infty} \Mcal_\alpha(v) = \infty$.
\end{proof}

\begin{lemma}\label{th:laplace_lemma}
  Under the conditions of \cref{th:mixture_rate}, $\Mcal_\alpha(v) = o(v)$.
\end{lemma}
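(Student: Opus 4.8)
The plan is to argue by contradiction, exploiting the two facts already established for large $v$: that $\Mcal_\alpha(v)$ is pinned down by $m(\Mcal_\alpha(v), v) = l_0/\alpha$ (\cref{th:mixture_root}), and that along the ``linear slice'' $s = bv$ the mixture integral $m(bv, v)$ diverges to $+\infty$ whenever $b$ lies in $(0, \psi'(\delta))$ (\cref{th:mixture_linear}). Since $\Mcal_\alpha(v)$ is held at the level where $m$ equals the fixed constant $l_0/\alpha$, while $m$ blows up along every positive-slope ray below $\psi'(\delta)$, the boundary cannot grow proportionally to $v$.

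To set this up I would first note that $\psi'(0_+) = 0$ and $\psi''(0_+) = c > 0$ force $\psi' > 0$ just to the right of the origin, so after shrinking $\delta$ if necessary we may assume $\psi'(\delta) > 0$; this is what guarantees the interval of admissible slopes is nonempty. Then suppose, for contradiction, that $\Mcal_\alpha(v) \ne o(v)$, so that there exist $\epsilon > 0$ and $v_n \uparrow \infty$ with $\Mcal_\alpha(v_n) > \epsilon v_n$ for all $n$; replacing $\epsilon$ by $\min\{\epsilon, \tfrac{1}{2}\psi'(\delta)\}$, we may also assume $0 < \epsilon < \psi'(\delta)$. Because $s \mapsto m(s, v)$ is nondecreasing, for all $n$ large enough that \cref{th:mixture_root} applies,
\[
  m(\epsilon v_n, v_n) \;\le\; m(\Mcal_\alpha(v_n), v_n) \;=\; \frac{l_0}{\alpha} \;<\; \infty,
\]
whereas \cref{th:mixture_linear} with $b = \epsilon$ gives $m(\epsilon v_n, v_n) \to \infty$, a contradiction. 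Hence $\Mcal_\alpha(v) = o(v)$.

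I do not anticipate a real obstacle: the substantive work --- finiteness of the linear slice, the Laplace asymptotic driving its divergence, and the fact that $\Mcal_\alpha$ solves $m(\cdot,v) = l_0/\alpha$ exactly --- is already encapsulated in \cref{th:mixture_linear,th:mixture_root}. The only points needing mild care are ensuring $\psi'(\delta) > 0$ (from the local behavior of $\psi$ at $0$) and choosing $\epsilon$ small enough to land in $(0, \psi'(\delta))$, both handled above. A slightly more quantitative alternative would invoke the expansion $m(bv, v) \sim C e^{v\psi^\star(b)}/\sqrt{v}$ from the proof of \cref{th:mixture_linear} together with $\psi^\star(b) > 0$ for $b > 0$, but the monotonicity comparison is cleaner and introduces no new estimates.
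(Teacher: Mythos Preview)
Your proof is correct and follows essentially the same approach as the paper: argue by contradiction, use monotonicity of $m(\cdot,v)$ to compare $m(\Mcal_\alpha(v),v)$ with $m(bv,v)$ along a linear ray, and invoke \cref{th:mixture_linear} together with \cref{th:mixture_root} to obtain the contradiction. If anything, your version is slightly more careful than the paper's, which phrases the contradiction hypothesis as ``$\Mcal_\alpha(v) \geq bv$ for all $v$ sufficiently large'' rather than along a subsequence; your explicit handling of the subsequence and of the condition $\epsilon < \psi'(\delta)$ is the cleaner formulation.
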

\begin{proof}
  Suppose for the sake of contradiction that $\Mcal_\alpha(v) \geq b v$ for all
  $v$ sufficiently large, for some $0 < b < \psi'(\delta)$. Then (again using
  the fact that $m(s,v)$ is nondecreasing in $s$)
  $\lim_{v \to \infty} m(\Mcal_\alpha(v), v) \geq \lim_{v \to \infty} m(b v, v)
  = \infty$ by \cref{th:mixture_linear}, contradicting \cref{th:mixture_root}.
\end{proof}

\begin{proof}[Proof of \cref{th:mixture_rate}]
  We invoke Theorem 4 of \citet{fulks_generalization_1951}, setting Fulks' $h$
  equal to our $v$, Fulks' $k$ equal to our $\Mcal_\alpha(v)$, Fulks' $\phi$
  equal to our $\psi$, Fulks' $\psi$ equal to the identity function, Fulks' $f$
  equal to our $f$, and Fulks' $b$ equal to our $\lambda_{\max}$. Fulks'
  assumptions (A1)-(A4) now read as follows.
  \begin{enumerate}
  \item[(A1)] requires $\psi(0) = \psi'(0_+) = 0$, $\psi''(0_+) > 0$, $\psi$ has
    three continuous derivatives in a neighborhood of the origin, and $\psi$ is
    positive and nondecreasing on $(0, \lambda_{\max})$.
  \item[(A2)] requires conditions on the identity function which are trivially
    satisfied.
  \item[(A3)] requires $f$ to be integrable and to be continuous and positive at
    the origin.
  \item[(A4)] requires $\Mcal_\alpha(v) \to \infty$ as $v \to \infty$ and
    $\Mcal_\alpha(v) = o(v)$.
  \end{enumerate}
  (A1) and (A3) are satisfied by conditions (i) and (ii) of
  \cref{th:mixture_rate}. (A4) is satisfied by
  \cref{th:M_blows_up,th:laplace_lemma}. For Fulks' Theorem 4, it remains to
  verify that $\sqrt{v} = o(\Mcal_\alpha(v))$. But if this were not true, then
  we could apply Theorem 1 or Theorem 2 of \citet{fulks_generalization_1951} to
  conclude that $m(\Mcal_\alpha(v), v) \to 0$ as $v \to \infty$, contradicting
  \cref{th:mixture_root}. So Fulks' Theorem 4 yields
  \begin{align}
    m(\Mcal_\alpha(v), v) \sim
      f(0) \sqrt{\frac{2\pi}{cv}} \expebrace{\frac{\Mcal_\alpha^2(v)}{2cv}}.
  \end{align}
  Using \cref{th:mixture_root} to set $m(\Mcal_\alpha(v), v) = l_0/\alpha$, we
  may write
  \begin{align}
    f(0) \sqrt{\frac{2\pi}{cv}} \expebrace{\frac{\Mcal_\alpha^2(v)}{2cv}}
    = \frac{l_0 e^{o(1)}}{\alpha},
  \end{align}
  which can be rearranged into the desired conclusion.
\end{proof}

We have proved the result for one-sided bounds, but a nearly-identical argument
applies to two-sided bounds such as \cref{th:two_sided_beta_mixture}.

\subsection{Proof of \cref{th:discrete_mixture}}
\label{sec:proof_discrete_mixture}

\begin{figure}
\centering
\includegraphics{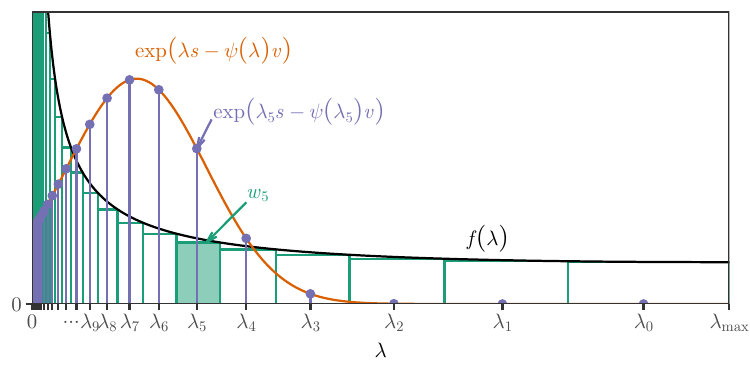}
\caption{Illustration of \cref{th:discrete_mixture}. Mixture density
  $f(\lambda)$ is discretized on a grid $(\lambda_k)_{k=0}^\infty$ which gets
  finer as $\lambda \downarrow 0$. Resulting discrete mixture weights are
  represented by areas within green bars. Integrand
  $\expebrace{\lambda s - \psi(\lambda) v}$ is evaluated at grid points
  $\lambda_k$, illustrated by purple points. Multiplying one integrand
  evaluation $\expebrace{\lambda_k s - \psi(\lambda_k) v}$ by the corresponding
  weight $w_k$ gives one term of the sum
  \eqref{eq:discrete_mixture_sum}. \label{fig:discrete_mixture}}
\end{figure}

Recall the discrete mixture support points and weights,
\begin{align}
    \lambda_k \defineas \frac{\overlambda}{\eta^{k+1/2}}
    \quad \text{and} \quad
    w_k \defineas
      \frac{\overlambda (\eta - 1) f(\lambda_k\sqrt{\eta})}{\eta^{k+1}}
    \quad \text{for} \quad k = 0, 1, 2, \dots.
\end{align}
Figure~\ref{fig:discrete_mixture} illustrates the construction. To see
heuristically why the exponentially-spaced grid $\lambda_k = \Ocal(\eta^{-k})$
makes sense, observe that the integrand
$\expebrace{\lambda s - \lambda^2 v / 2}$ is a scaled normal density in
$\lambda$ with mean $s/v$ and standard deviation $1/\sqrt{v}$. In the regime
relevant to our curved boundaries, $s$ is of order $\sqrt{v}$, ignoring
logarithmic factors. Hence the integrand at time $v$ has both center and spread
of order $1/\sqrt{v}$, so as $v \to \infty$, the relevant scale of the integrand
shrinks. With the grid $\lambda_k = \Ocal(\eta^{-k})$ we have
$\lambda_k - \lambda_{k+1} = \Ocal(\lambda_k)$, ensuring that the resolution of
the grid around the peak of the integrand matches the scale of the integrand as
$v \to \infty$.

The discrete mixture bound is a valid mixture boundary in its own right, based
on a discrete mixing distribution, but we may wish to know how well it
approximates the continuous-mixture boundary from which it is derived. To
illustrate the accuracy of the discrete mixture construction, we compare it to
the one-sided normal mixture bound, \cref{th:normal_mixture}.  By using the same
half-normal mixing density in \cref{th:discrete_mixture} and setting
$\eta = 1.05$, $\overlambda = 100$, we may evaluate a corresponding discrete
mixture bound $\Mtil_\alpha$. With $\rho = 14.3$, $\alpha = 0.05$ and $l_0 = 1$,
numerical calculations indicate that
$\Mtil_\alpha(v) / \NM_\alpha(v) \leq 1.004$ for $1 \leq v \leq 10^6$,
suggesting that \cref{th:discrete_mixture} gives an excellent conservative
approximation to the corresponding continuous mixture boundary over a large
practical range. Of course, when a closed form is available as in
\cref{th:normal_mixture}, one should use it in practice. But an exact closed
form integral is rarely available as it is in \cref{th:normal_mixture}, and
substantial looseness often accompanies closed-form approximations which
provably maintain crossing probability guarantees. In such cases, unless a
closed form is required, \cref{th:discrete_mixture} is preferable. See
figure~\ref{fig:finite_lil_all} for an example; in this figure, the bounds of
\citet{balsubramani_sharp_2014} and \citet{darling_further_1968} involve
closed-form mixture integral approximations.

\begin{proof}[Proof of \cref{th:discrete_mixture}]
Because $f$ is nonincreasing, $f(\lambda) \geq f(\lambda_k \sqrt{\eta})$ on the
interval $[\lambda_k / \sqrt{\eta}, \lambda_k \sqrt{\eta}]$, which has width
$\lambda_{\max} (\eta - 1) / \eta^{k+1} = w_k / f(\lambda_k \sqrt{\eta})$. Hence
$\sum_{k=0}^\infty w_k \leq \int_0^\infty f(\lambda) \d \lambda = 1$. Let $G$ be
a discrete distribution which places mass $w_k / \sum_{j=0}^\infty w_j$ at the
point $\lambda_k$. By \cref{th:basic_mixture}, we know the mixture bound
$\Mcal_\alpha$ applied to the discrete mixture distribution $G$ yields a
sub-$\psi$ uniform boundary with crossing probability $\alpha$. But
\begin{align}
\sum_{k=0}^\infty w_k \expebrace{\lambda_k s - \psi(\lambda_k) v}
  \leq \int \expebrace{\lambda s - \psi(\lambda) v} \d G(\lambda),
\end{align}
so $\Mtil_\alpha \geq \Mcal_\alpha$. That is, our discrete mixture approximation
$\Mtil_\alpha$ is a conservative overestimate of a corresponding exact mixture
boundary $\Mcal_\alpha$, and can only have a lower crossing probability. So the
discrete mixture bound $\Mtil_\alpha$ satisfies the desired probability
inequality $\P(\exists t : S_t \geq \Mtil_\alpha(V_t)) \leq \alpha$.
\end{proof}

\subsection{Stitching as a discrete mixture approximation}
\label{sec:stitching_mixture}

Suppose we wish to analytically approximate the discrete mixture boundary
$\Mtil_\alpha$ of \cref{th:discrete_mixture} in the sub-Gaussian case
$\psi = \psi_N$. Clearly the sum is lower bounded by the maximum summand, which
gives
\begin{align}
\Mtil_\alpha(v)
  &\leq \sup\ebrace{
    s \in \R :
    \sup_{k \geq 0}
      \ebracket{w_k \exp\ebrace{\lambda_k s - \psi_N(\lambda_k) v}}
    < \frac{l_0}{\alpha}
  } \\
  &= \min_{k \geq 0} \ebrace{
    \frac{\log(l_0 / w_k \alpha)}{\lambda_k} + \frac{\lambda_k}{2}\, v
  }.
\end{align}
The last expression is the pointwise minimum of a collection of linear
boundaries of the form presented in \cref{th:uniform_chernoff}, each chosen with
a different $\lambda_k$, and with nominal crossing rates $w_k \alpha$ so that a
union bound over crossing events yields total crossing probability
$\sum_k w_k \alpha \leq \alpha$. This is very similar to the stitching
construction, with a slightly different choice of the sequence $\lambda_k$.

By equating $w_k$ from \cref{th:discrete_mixture} with $1/h(k)$ from
\cref{th:stitching}, this observation allows us to view a stitched bound with
function $h(k)$ as an approximation to a mixture bound with mixture density
$f(\lambda) = \Theta(1 / \lambda h(\log \lambda^{-1}))$ as
$\lambda \downarrow 0$. For exponential stitching, this yields
$f(\lambda) = \Theta(1)$---densities approaching a nonzero constant as
$\lambda \downarrow 0$, including the half-normal distribution, correspond to
exponential stitched boundaries growing at a rate $\sqrt{V_t \log V_t}$. For
polynomial stitching, we have the corresponding mixture density
\begin{align}
\flil_s(\lambda)
  \defineas \frac{(s - 1) s^{s-1} \indicator{0 \leq \lambda \leq \exp(-s)}}
                 {\lambda \log^s \lambda^{-1}},
  \label{eq:lil_mixture}
\end{align}
matching the density from \citet[Lemma 12]{balsubramani_sharp_2014} (we truncate
at $\lambda = e^{-s}$ to ensure the density is nonincreasing). The ``slower''
function $h(k) \propto k \log^s k$ corresponds to
$f(\lambda) = \Theta(1 / \lambda (\log \lambda^{-1}) (\log \log
\lambda^{-1})^s)$, the density from example 3 of
\citet{robbins_statistical_1970}.

\subsection{Proof of \cref{th:inverted_stitching}}
\label{sec:proof_inverted_stitching}

The proof follows a straightforward idea. We break time into epochs
$\eta^k \leq V_t < \eta^{k+1}$. Within each epoch we consider the linear
boundary passing through the points $(\eta^k, g(\eta^k))$ and
$(\eta^{k+1}, g(\eta^{k+1}))$. This line lies below $g(V_t)$ throughout the
epoch, and its crossing probability is determined by its slope and intercept as
in \cref{th:uniform_chernoff}. Taking a union bound over epochs yields the
result.

We need the following lemma concerning $g$:
\begin{lemma}\label{th:concave_lemma}
  If $g$ is nonnegative and strictly concave on $\R_{\geq 0}$, then $g(v)$ is
  nondecreasing and $g(v) / v$ is strictly decreasing on $v > 0$.
\end{lemma}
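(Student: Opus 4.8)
The plan is to prove the two assertions separately, in each case reducing directly to the defining inequality of concavity and exploiting two features of the hypothesis: the domain $\R_{\geq 0}$ contains the origin, and $g(0) \geq 0$.

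\textbf{Monotonicity.} I would argue by contradiction. Suppose $g(a) > g(b)$ for some $0 \leq a < b$, and set $\delta \defineas g(a) - g(b) > 0$. For any $c > b$, write $b$ as the convex combination $b = \tfrac{c-b}{c-a}\,a + \tfrac{b-a}{c-a}\,c$; concavity gives $g(b) \geq \tfrac{c-b}{c-a} g(a) + \tfrac{b-a}{c-a} g(c)$, which rearranges to $g(c) \leq g(a) - \tfrac{c-a}{b-a}\,\delta$. Letting $c \to \infty$ forces $g(c) \to -\infty$, contradicting $g \geq 0$. Hence no such pair $a < b$ exists, i.e., $g$ is nondecreasing. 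Note this step uses only plain concavity together with the lower bound and the unboundedness of the domain.

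\textbf{Strict decrease of $g(v)/v$.} Fix $0 < u < v$. Since $0$ lies in the domain, I would write $u = \tfrac{u}{v}\cdot v + \bigl(1 - \tfrac{u}{v}\bigr)\cdot 0$, a strict convex combination of the two distinct points $v$ and $0$. Strict concavity then yields $g(u) > \tfrac{u}{v}\,g(v) + \bigl(1 - \tfrac{u}{v}\bigr) g(0) \geq \tfrac{u}{v}\,g(v)$, the last inequality because $g(0) \geq 0$. Dividing through by $u > 0$ gives $g(u)/u > g(v)/v$, as required.

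The only real subtlety — hardly an obstacle — is in the monotonicity step: one must recognize that a concave function bounded below on an interval extending to $+\infty$ cannot be decreasing anywhere, which is exactly where both $g \geq 0$ and the shape of the domain enter. The second half is immediate once one remembers to feed the origin into the concavity inequality, a move again licensed precisely because the domain is $\R_{\geq 0}$ and $g(0) \geq 0$.
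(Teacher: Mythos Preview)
Your proof is correct. Both parts are established cleanly, and the algebra in the monotonicity step checks out: from $(c-a)g(b) \geq (c-b)g(a) + (b-a)g(c)$ one indeed obtains $g(c) \leq g(a) - \tfrac{c-a}{b-a}\delta$.

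The paper's proof reaches the same conclusions via supergradients rather than chord inequalities. For monotonicity, the paper argues that a negative supergradient at any point forces $g$ below a downward-sloping supporting line, hence eventually negative; you instead extrapolate a secant with negative slope, which is the same geometric fact without invoking subgradient existence. For the second claim, the paper takes a supergradient $s$ at $x$, bounds it by $g(x)/x$ using $g(0)\geq 0$ and the supporting-line inequality at $0$, then applies strict concavity at $y$; your argument short-circuits this by plugging the origin directly into the strict concavity inequality as one of the two endpoints. Your route is slightly more elementary in that it never appeals to the existence of supergradients, while the paper's version makes the ``slope'' interpretation more explicit. Both are equally rigorous and of comparable length.
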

\begin{proof}
  If $s < 0$ is a supergradient of $g$ at some point $t$, then
  $g(t + u) < g(t) + s u < 0$ for sufficiently large $u$, contradicting the
  non-negativity of $g$. So $g$ is nondecreasing. Now fix $0 < x < y$ and let
  $s$ be any supergradient of $g$ at $x$. From nonnegativity and concavity we
  have $0 \leq g(0) \leq g(x) - x s$, so that $s \leq g(x) / x$. Strict
  concavity then implies $g(y) < g(x) + s (y - x) \leq g(x) y / x$.
\end{proof}

\begin{proof}[Proof of \cref{th:inverted_stitching}]
Fix any $\eta > 1$. On $\eta^k \leq v < \eta^{k+1}$ we lower bound $g(v)$ by the
line $a_k + b_k v$ passing through the points $(\eta^k, g(\eta^k))$ and
$(\eta^{k+1}, g(\eta^{k+1}))$. This line has intercept and slope
\begin{align}
a_k &= \frac{\eta g(\eta^k) - g(\eta^{k+1})}{\eta - 1}, \\
b_k &= \frac{g(\eta^{k+1}) - g(\eta^k)}{\eta^k (\eta - 1)}.
\end{align}
Note $a_k > 0$ and $b_k \geq 0$ by \cref{th:concave_lemma}. We bound the
upcrossing probability of this linear boundary using \cref{th:uniform_chernoff}:
\begin{align}
\P(\exists t \geq 1 : S_t \geq a_k + b_k V_t)
  \leq l_0 e^{-2 a_k b_k}
  = l_0 \expebrace{
      -\frac{2(g(\eta^{k+1}) - g(\eta^k))(\eta g(\eta^k) - g(\eta^{k+1}))}
            {\eta^k(\eta-1)^2}
    }.
\end{align}
The conclusion follows from a union bound over epochs and from the arbitrary
choice of $\eta$.
\end{proof}

Inspection of the proof reveals that the crossing probability bound
\eqref{eq:inverted_stitching_sum} is valid not only for the boundary $u$ given
in \eqref{eq:inverted_stitching_boundary}, but also for a similar boundary which
is finite and linear for all $v < 1$ and $v > v_{\max}$. This follows by
extending the linear boundaries over the first and last epochs.

\subsection{Proof of \cref{th:empirical_variance}}
\label{sec:proof_empirical_variance}

For the proof, we take $a = 0, b = 1$ without loss of generality. Write
$Y_t \defineas X_t - \E_{t-1} X_t$ and
$\delta_t \defineas \Xhat_t - \E_{t-1} X_t$. Then
$Y_t - \delta_t = X_t - \Xhat_t \in [-1, 1]$. We will show that
$\expebrace{\lambda \sum_{i=1}^t Y_i - \psi_E(\lambda) \sum_{i=1}^t (Y_i -
  \delta_i)^2}$ is a supermartingale for each $\lambda \in [0,1)$, where we take
$c=1$ in $\psi_E$.

The proof of Lemma 4.1 in \citet{fan_exponential_2015} shows that
$\expebrace{\lambda \xi - \psi_E(\lambda) \xi^2} \leq 1 + \lambda \xi$ for all
$\lambda \in [0, 1)$ and $\xi \geq -1$. Applied to $\xi = y - \delta$, we have
\begin{align}
  \expebrace{\lambda y - \psi_E(\lambda)(y - \delta)^2}
  \leq e^{\lambda \delta}(1 + \lambda (y - \delta)).
\end{align}
Since $Y_t - \delta_t \geq -1$, $\E_{t-1} Y_t = 0$, and $\delta_t$ is
predictable, the above inequality implies
\begin{align}
  \E_{t-1} \expebrace{\lambda Y_t - \psi_E(\lambda)(Y_t - \delta_t)^2}
  \leq e^{\lambda \delta_t}(1 - \lambda \delta_t)
  \leq 1,
\end{align}
using $1 - x \leq e^{-x}$ in the final step.

This shows that $S_t = \sum_{i=1}^t Y_i = \sum_{i=1}^t X_i - t \mu_t$ is
sub-exponential with variance process
$V_t = \sum_{i=1}^t (Y_i - \delta_i)^2 = \sum_{i=1}^t (X_i - \Xhat_i)^2$ and
scale $c=1$. It follows that $\P(\exists t: S_t \geq u(V_t)) \leq \alpha$. A
similar argument applied with $-X_t$ in place of $X_t$ shows that
$\P(\exists t: -S_t \geq u(V_t)) \leq \alpha$, and a union bound finishes the
proof. \qed

We remark that the proofs of \citet[Theorem 11]{maurer_empirical_2009},
\citet[Theorem 1]{audibert_explorationexploitation_2009}, and
\citet{balsubramani_sequential_2016} follow very different arguments. All three
proofs involve a Bennett-type concentration bound for the sample mean with a
radius depending on the true variance, combined via a union bound with a
concentration bound for the sample
variance. \Citet{audibert_explorationexploitation_2009} and
\citet{balsubramani_sequential_2016} achieve the latter bound using another
Bennett/Bernstein-type inequality and the inequality $\E X^4 \leq \E X^2$ for
$\abs{X} \leq 1$, while \citet{maurer_empirical_2009} use a self-bounding
property to achieve a concentration inequality for the sample variance directly
\citep[Theorem 7]{maurer_empirical_2009}.

In contrast, our argument avoids the union bound over the sample mean and sample
variance bounds. We achieve this by constructing an exponential supermartingale
which directly relates the deviations of $S_t$ to the ``online'' empirical
variance $V_t$. In terms of proof technique, our method owes much more to the
literature on self-normalized bounds (\citealp{de_la_pena_general_1999},
\citealp{de_la_pena_self-normalized_2004}, \citealp{bercu_exponential_2008},
\citealp{delyon_exponential_2009} and especially \citealp{fan_exponential_2015})
than to the literature on empirical-Bernstein bounds.

\subsection{Proof of \cref{th:matrix_asymptotic_lil}}
\label{sec:proof_matrix_asymptotic_lil}

For case (1), Lemma 3(f) and Lemma 2 of \citet{howard_exponential_2018}
(cf. \citealp{delyon_exponential_2009}) show that $S_t = \gamma_{\max}(Y_t)$ is
sub-Gaussian with variance process
$\widetilde{V}_t = \gamma_{\max}\eparen{\sum_{i=1}^t \frac{\Delta Y_i^2 + 2 \E
    \Delta Y_i^2}{3}}$. Invoking \cref{th:asymptotic_lil}, we have
\begin{align}
  \limsup_{t \to \infty}
  \frac{S_t}{\sqrt{2 \widetilde{V}_t \log\log \widetilde{V}_t}} \leq 1
  \quad \text{a.s. on } \ebrace{\sup_t \widetilde{V}_t = \infty}.
\end{align}
Applying the strong law of large numbers elementwise, we have
$t^{-1} \sum_{i=1}^t \frac{\Delta Y_i^2 + 2 \E \Delta Y_i^2}{3} \convas \E
Y_1^2$ as $t \to \infty$, and the continuity of the maximum eigenvalue map over
the set of positive semidefinite matrices ensures that
$t^{-1} \widetilde{V}_t \convas \gamma_{\max}(\E Y_1^2) = t^{-1} V_t$. Hence, so
long as $\E Y_1^2 > 0$ we conclude that, with probability one,
$\sup_t \widetilde{V}_t = \infty$ and
$\sqrt{\widetilde{V}_t \log \log \widetilde{V}_t} \sim \sqrt{\gamma_{\max}(\E
  Y_1^2) t \log \log t}$, completing the proof for case (1). (If $\E Y_1^2 = 0$
then the event $\brace{\sup_t V_t = \infty}$ is empty and the result is
vacuous.)

In case (2), Fact 1(d) and Lemma 2 of \citet{howard_exponential_2018}
(cf. \citealp{tropp_user-friendly_2012}) show that $(S_t)$ defined as above is
sub-gamma with variance process $(V_t)$ and scale $c$. The conclusion now
follows directly from \cref{th:asymptotic_lil}. \qed

\subsection{Proof of \cref{th:empirical_covariance}}

The argument is adapted from \citet{tropp_introduction_2015}. Let
$X_i \defineas x_i x_i^T - \Sigma$. The triangle inequality implies
$\opnorm{X_i} \leq \opnorm{x_i x_i^T} + \opnorm{\Sigma} \leq 2b$. Hence, by Fact
1(c) and Lemma 2 of \citet{howard_exponential_2018}
(cf. \citealp{tropp_user-friendly_2012}),
$S_t = \gamma_{\max}\eparen{\sum_{i=1}^t X_i}$ is sub-Poisson with scale
$c = 2b$ and variance process
\begin{align}
V_t &= \gamma_{\max}\eparen{\sum_{i=1}^t \E X_i^2} \\
  &= \gamma_{\max}
       \eparen{\sum_{i=1}^t \ebracket{\E[(x_i x_i^T)^2] - \Sigma^2}
     } \\
  &\leq \sum_{i=1}^t \gamma_{\max}\eparen{\E[(x_i x_i^T)^2]}.
\end{align}
In the final step, we neglect the negative semidefinite term $-\Sigma^2$ and use
the fact that the maximum eigenvalue of a sum of positive semidefinite matrices
is bounded by the sum of the maximum eigenvalues. We continue by using
$\norm{x_i x_i^T} = \norm{x_i}_2^2 \leq b$ and the fact the expectation respects
the semidefinite order to obtain
\begin{align}
V_t &\leq \sum_{i=1}^t \gamma_{\max}\eparen{\E \norm{x_i}_2^2 x_i x_i^T} \\
  &\leq t b \opnorm{\Sigma}.
\end{align}
Plugging this upper bound on $V_t$ into the discrete mixture bound of
\cref{th:discrete_mixture} gives the result. \qed

\section{Implications among sub-$\psi$ boundaries}

\begin{table}
\centering
\renewcommand{\arraystretch}{1.1}
\begin{tabular}{lllll}
\toprule
& $\psi_1$ & $\psi_2$ & $a$ & Restriction \\
\midrule
(1) & $\psi_N$ & $\psi_{B,g,h}$ & $\frac{\varphi(g, h)}{gh}$ & any $g,h > 0$ \\
(2) & $\psi_N$ & $\psi_{B,g,h}$ & $\frac{(g+h)^2}{4gh}$ & any $g,h > 0$ \\
(3) & $\psi_{P,c}$ & $\psi_{B,g,g+c}$ & 1 & any $g > -c$ \\
(5) & $\psi_{G,c}$ & $\psi_{P,3c}$ & 1 & \\
(6) & $\psi_{E,c}$ & $\psi_{G,2c/3}$ & 1 & \\
(7) & $\psi_{G,c}$ & $\psi_{E,c}$ & 1 & $c \geq 0$ \\
(8) & $\psi_{G,c}$ & $\psi_{E,2c}$ & 1 & $c < 0$ \\
(9) & $\psi_{P,c}$ & $\psi_{G,c/2}$ & 1 & $c < 0$ \\
(10) & $\psi_N$ & $\psi_{P,c}$ & 1 & any $c < 0$ \\
(11) & $\psi_{B,g,h}$ & $\psi_{P,-g}$ & 1 & \\
\bottomrule
\end{tabular}
\caption{For each row, if $u$ is a sub-$\psi_1$ uniform boundary, subject to the
  given restriction, then $v \mapsto u(av)$ is a sub-$\psi_2$ uniform boundary.
  $\varphi(g,h)$ is defined in \eqref{eq:varphi_defn}. See
  \cref{th:psi_boundary_relations} for details. \label{tab:psi_relations}}
\end{table}

Together with \cref{tab:psi_relations}, the following proposition formalizes the
relationships illustrated in \cref{fig:psi_boundary_relations}, restating
Proposition 2 of \citet{howard_exponential_2018} in the language of uniform
boundaries. The first row of \cref{tab:psi_relations} uses the function
\begin{align}
  \varphi(g, h) \defineas \begin{cases}
      \frac{h^2 - g^2}{2 \log(h/g)}, & g < h \\
      gh, & g \geq h.
    \end{cases} \label{eq:varphi_defn}
\end{align}

\begin{proposition}\label{th:psi_boundary_relations}
  For each row in \cref{tab:psi_relations}, if $u$ is a sub-$\psi_1$ uniform
  boundary, and the given restrictions are satisfied, then $v \mapsto u(av)$ is
  a sub-$\psi_2$ uniform boundary for the given constant $a$. Furthermore, when
  we allow only transformations of the form $v \mapsto u(av)$, these capture all
  possible implications among the five sub-$\psi$ boundary types defined above,
  and the given constants are the best possible (in the case of row (2), the
  constant $(g+h)^2 / 4gh$ is the best possible of the form $k/gh$ where $k$
  depends only on the total range $g + h$).
\end{proposition}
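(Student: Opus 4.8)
The plan is to reduce the boundary-level statement to the process-level comparisons that it restates, namely Proposition~2 of \citet{howard_exponential_2018}. The only bridge needed is the following elementary observation. Fix a row of \cref{tab:psi_relations} and suppose $a\psi_1(\lambda)\ge\psi_2(\lambda)$ for every $\lambda$ in the domain of $\psi_1$, which in each row is contained in the domain of $\psi_2$. Then for any $(S_t,V_t)\in\mathbb{S}^{l_0}_{\psi_2}$, the pair $(S_t,aV_t)$ lies in $\mathbb{S}^{l_0}_{\psi_1}$: if $L_t(\lambda)$ is the supermartingale witnessing membership of $(S_t,V_t)$ in $\mathbb{S}^{l_0}_{\psi_2}$, then, using $V_t\ge0$ and $a\psi_1(\lambda)\ge\psi_2(\lambda)$, we get $\expebrace{\lambda S_t-\psi_1(\lambda)(aV_t)}\le\expebrace{\lambda S_t-\psi_2(\lambda)V_t}\le L_t(\lambda)$, and $\E L_0(\lambda)\le l_0$ is unchanged. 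Consequently, if $u$ is a sub-$\psi_1$ uniform boundary with crossing probability $\alpha$, then applying its defining inequality to $(S_t,aV_t)$ yields $\P(\exists t\ge1:S_t\ge u(aV_t))\le\alpha$ for every $(S_t,V_t)\in\mathbb{S}^{l_0}_{\psi_2}$; taking the supremum over $\mathbb{S}^{l_0}_{\psi_2}$ shows $v\mapsto u(av)$ is a sub-$\psi_2$ uniform boundary. So the ``if'' half of the proposition holds in a given row as soon as the pointwise inequality $a\psi_1\ge\psi_2$ holds with the tabulated constant.

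Verifying those pointwise inequalities, and that the tabulated constants equal the least admissible $a$, is precisely the content of \citet[Proposition~2]{howard_exponential_2018}; for completeness I would note that each reduces to a one-variable comparison. Substituting $x=c\lambda$ turns rows (5)--(8) and (11) into termwise power-series inequalities: for instance $-\log(1-x)-x=\sum_{k\ge2}x^k/k$ versus $x^2/[2(1-x)]=\tfrac12\sum_{k\ge2}x^k$ gives row (7) with $a=1$ since $1/k\le1/2$ for $k\ge2$, and the ratio tends to $1$ at the origin, forcing $a=1$ to be optimal; rows (9), (10) hold because $\psi_{P,c}$ with $c<0$ is dominated by both $\psi_N$ and $\psi_{G,c/2}$; and rows (1), (2) amount to computing $\sup_{\lambda>0}\psi_{B,g,h}(\lambda)/\psi_N(\lambda)$, an elementary calculus exercise giving $\varphi(g,h)/(gh)$ (resp. its best bound depending only on $g+h$). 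In each case the smallest constant making $a\psi_1\ge\psi_2$ hold on the domain of $\psi_1$ is $\sup_\lambda\psi_2(\lambda)/\psi_1(\lambda)$, which is the tabulated value.

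The substantive part, and the main obstacle, is the converse: that no smaller constant works, and that \cref{tab:psi_relations} and \cref{fig:psi_boundary_relations} list \emph{all} implications of the restricted form $v\mapsto u(av)$. For sharpness of $a$ in a given row one would, for any smaller $a'$, choose $\lambda^\star$ with $a'\psi_1(\lambda^\star)<\psi_2(\lambda^\star)$ and exhibit a process that is ``exactly'' sub-$\psi_2$ at $\lambda^\star$ --- an i.i.d.\ sum whose increment is the extremal variable realizing $\psi_2$ (a two-point variable for $\psi_{B,g,h}$, a Poisson or exponential for $\psi_{P,c},\psi_{E,c}$, a Gaussian for $\psi_N$) --- together with a sub-$\psi_1$ boundary $u$, e.g.\ the linear one of \cref{th:uniform_chernoff} tuned at $\lambda^\star$, for which $v\mapsto u(a'v)$ has crossing probability strictly above $\alpha$, because the nonnegative process $\expebrace{\lambda^\star S_t-a'\psi_1(\lambda^\star)V_t}$ then has expectation bounded away from $0$ and, being a product of i.i.d.\ factors with mean exceeding $1$, crosses any fixed level. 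The delicate step is ruling out \emph{every} sub-$\psi_1$ boundary rather than the convenient linear one, which calls for an invariance/unimprovability argument in the spirit of \cref{th:unimprovable}; I would instead invoke \citet[Proposition~2]{howard_exponential_2018}, which establishes exactly these extremal facts at the level of sub-$\psi$ processes, after which the boundary-language conclusions follow from the reduction in the first paragraph.
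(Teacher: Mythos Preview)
Your reduction is exactly the paper's approach: the proposition is stated as a restatement of Proposition~2 of \citet{howard_exponential_2018}, and the only bridge needed is the one you spell out (if $a\psi_1\ge\psi_2$ on $\operatorname{dom}\psi_1\subseteq\operatorname{dom}\psi_2$, then $(S_t,V_t)$ sub-$\psi_2$ implies $(S_t,aV_t)$ sub-$\psi_1$, so sub-$\psi_1$ boundaries scale to sub-$\psi_2$ boundaries), which the paper notes only informally in the remark following the proposition about arrow reversal relative to \citet{howard_exponential_2018}. The paper offers no proof beyond the citation, so your plan---bridge plus deferral to the reference for the pointwise $\psi$-comparisons and their sharpness---is complete and matches.

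One correction to your sharpness heuristic: a product of i.i.d.\ positive factors with mean exceeding $1$ need \emph{not} cross every fixed level. With $Z_i=\exp\{\lambda^\star X_i-a'\psi_1(\lambda^\star)\}$ and $\E X_i=0$, one has $\E\log Z_i=-a'\psi_1(\lambda^\star)<0$ whenever $a'>0$, so by the strong law $\log M_t\to-\infty$ and $M_t\to0$ a.s.\ even though $\E M_t\to\infty$; there is no immediate reason its level-$l_0/\alpha$ crossing probability exceeds $\alpha$. You correctly flag this step as delicate and defer to the cited reference---just drop the ``crosses any fixed level'' claim, which is wrong as written.
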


A reader who is familiar with \citet{howard_exponential_2018} will note that the
arrows in \cref{fig:psi_boundary_relations} are reversed with respect to Figure
4 in their paper. Indeed, since any sub-Bernoulli process is also sub-Gaussian,
it follows that any sub-Gaussian uniform boundary is also a sub-Bernoulli
uniform boundary, and so on.

\section{Additional proofs}\label{sec:other_proofs}

\subsection{Proof of \cref{th:normal_mixture_rho}}
\label{sec:proof_normal_mixture_rho}

Let $k \defineas (l_0 / \alpha)^2$. For part (a), we will set the derivative
of the squared objective $u^2(v) / v$ to zero:
\begin{align}
  \frac{\d}{\d v} \ebracket{
    \eparen{1 + \frac{\rho}{v}}\eparen{\log\pfrac{k(v+\rho)}{\rho}}}
  = -\frac{\rho}{v^2} \log\pfrac{k(v+\rho}{\rho} + \frac{1}{v} &= 0
    \label{eq:v_rho_opt}.\\
  -\pfrac{v+\rho}{\rho} \expebrace{-\frac{v+\rho}{\rho}} &= -\frac{1}{ek}.
\end{align}
We solve this equation using the lower branch $W_{-1}$ since we know
$-(v+\rho)/\rho \leq -1$:
\begin{align}
\frac{v+\rho}{\rho} &= -W_{-1}\eparen{-\frac{1}{ek}},
\end{align}
which is equivalent to \eqref{eq:m_and_rho}.

For part (b), we optimize the squared boundary $u^2(v)$:
\begin{align}
  \frac{\d}{\d \rho} \ebracket{(v + \rho) \log \pfrac{k(v + \rho)}{\rho}}
  = \log\pfrac{k(v+\rho)}{\rho} - \frac{v}{\rho} &= 0.
\end{align}
which is equivalent to \eqref{eq:v_rho_opt}. \qed

\subsection{Proof of \cref{th:unimprovable}}\label{sec:proof_unimprovable}

First, \citet[Theorem 1]{robbins_boundary_1970} show that, for $B(t)$ a standard
Brownian motion,
\begin{align}
\P(\exists t \in (0, \infty) : B(t) \geq \Mcal_\alpha(t)) = \alpha.
\label{eq:brownian_mixture_exact}
\end{align}
Let $(X_t)_{t=1}^\infty$ be any i.i.d.\ sequence of mean-zero random variables
with unit variance and $\E e^{\lambda X_1} \leq e^{\lambda^2 / 2}$, for example
standard normal or Rademacher random variables. For each $m \in \N$, let
$S_t^{(m)} \defineas \sum_{i=1}^t X_i / \sqrt{m}$ and
$V_t^{(m)} \defineas t / m$, noting that $(S_t^{(m)})$ is sub-Gaussian with
variance process $(V_t^{(m)})$. Our proof rests upon a standard application of
Donsker's theorem, detailed below, which shows that, for any $T \in \N$,
\begin{align}
\lim_{m \to \infty}
\P\eparen{\exists t \in [mT] : S^{(m)}_t \geq \Mcal_\alpha(V^{(m)}_t)}
  = \P(\exists t \in (0, T] : B(t) \geq \Mcal_\alpha(t)).
 \label{eq:unimprovable_step_1}
\end{align}
To obtain the desired conclusion from \eqref{eq:unimprovable_step_1}, we write,
for any $m \in \N$ and $T \in \N$,
\begin{align}
  \P\eparen{\exists t \in \N: S_t^{(m)} \geq \Mcal_\alpha(V_t^{(m)})}
    &\geq \P\eparen{\exists t \in [mT]: S_t^{(m)} \geq \Mcal_\alpha(V_t^{(m)})}.
\end{align}
Take $m \to \infty$ and use \eqref{eq:unimprovable_step_1} to find, for any
$T \in \N$,
\begin{align}
  \liminf_{m \to \infty}
  \P\eparen{\exists t \in \N: S_t^{(m)} \geq \Mcal_\alpha(V_t^{(m)})}
    &\geq \P\eparen{\exists t \in (0, T] : B(t) \geq \Mcal_\alpha(t)}.
\end{align}
Now take $T \to \infty$ to obtain
\begin{align}
  \liminf_{m \to \infty}
  \P\eparen{\exists t \in \N: S_t^{(m)} \geq \Mcal_\alpha(V_t^{(m)})}
    &\geq \P\eparen{\exists t \in (0, \infty) : B(t) \geq \Mcal_\alpha(t)}
    = \alpha, \label{eq:liminf_lower_bound}
\end{align}
by \eqref{eq:brownian_mixture_exact}. But for each $m \in \N$, $S_t^{(m)}$ is
sub-Gaussian with variance process $V_t^{(m)}$, so that
\begin{align}
  \P\eparen{\exists t \in \N: S_t^{(m)} \geq \Mcal_\alpha(V_t^{(m)})}
  \leq \alpha. \label{eq:unimprovable_upper}
\end{align}
Together, \eqref{eq:liminf_lower_bound} and \eqref{eq:unimprovable_upper} yield
\begin{align}
  \lim_{m \to \infty}
    \P\eparen{\exists t \in \N: S_t^{(m)} \geq \Mcal_\alpha(V_t^{(m)})}
  = \alpha.
\end{align}
Since $(S_t^{(m)}, V_t^{(m)}) \in \mathbb{S}_{\psi_N}^1$ for each $m$, the
conclusion follows.

To prove \eqref{eq:unimprovable_step_1}, we will use the fact that
$\Mcal_\alpha: \R_{\geq 0} \to \R_{\geq 0}$ is continuous, increasing and
concave, as proved in \cref{th:mixture_properties} below. For each
$t \in \R_{> 0}$ let $S(mt)$ be equal to $S_{mt}$ for $mt \in \N$ and a linear
interpolation otherwise (with $S(0) = 0$). Let $C[0,T]$ denote the space of
continuous, real-valued functions on $[0,T]$ equipped with the sup-norm, and let
$\P_0$ denote the probability measure for standard Brownian motion. We first use
a corollary of Donsker's theorem: for any $\varphi: C[0,T] \to \R$ continuous
$\P_0$-a.s., we have \citep[Theorems 8.1.5, 8.1.11]{durrett_probability:_2017}
\begin{align}
  \varphi\pfrac{S(m\cdot)}{\sqrt{m}}
    \convdist \varphi(B(\cdot)) \quad \text{as } m \to \infty.
\end{align}
We let $\varphi(f) \defineas \sup_{t \in [0, T]} [f(t) - \Mcal_\alpha(t)]$, so
that by compactness of $[0,T]$ and continuity of $f$ and $\Mcal_\alpha$,
$\varphi(f) \geq 0$ if and only if $f(t) \geq \Mcal_\alpha(t)$ for some
$t \in [0, T]$. Now $\varphi(S(m\cdot) / \sqrt{m}) \convdist \varphi(B(\cdot))$,
and note that $\varphi(B(\cdot))$ has a continuous distribution: the
distribution when $\Mcal_\alpha(t) \equiv 0$ is well-known by the reflection
principle, and the measure for the Brownian motion with drift
$B(t) - \Mcal_\alpha(t) + \Mcal_\alpha(0)$ is equivalent to the measure for
$B(t)$ by the Cameron-Martin theorem \citep[Theorem
1.38]{morters_brownian_2010}. Hence
\begin{align}
  \P\eparen{\exists t \in [0, T] : \frac{S(mt)}{\sqrt{m}} \geq \Mcal_\alpha(t)}
    \to \P\eparen{\exists t \in [0, T] : B(t) \geq \Mcal_\alpha(t)}.
  \label{eq:step_1_part_1}
\end{align}
But because $\Mcal_\alpha(t)$ is concave, the linear interpolation of $S(\cdot)$
cannot add any new upcrossings beyond those in $(S_t)$:
\begin{align}
  \P\eparen{\exists t \in [0, T] : \frac{S(mt)}{\sqrt{m}} \geq \Mcal_\alpha(t)}
    &= \P\eparen{\exists x \in [mT] :
         \frac{S_x}{\sqrt{m}} \geq \Mcal_\alpha(x/m)} \\
    &= \P\eparen{\exists t \in [mT] : S^{(m)}_t \geq \Mcal_\alpha(V^{(m)}_t)}.
      \label{eq:step_1_part_2}
\end{align}
Combining \eqref{eq:step_1_part_2} with \eqref{eq:step_1_part_1} yields
\eqref{eq:unimprovable_step_1}, completing the proof.
\qed

\begin{lemma}\label{th:mixture_properties}
  The function $\Mcal_\alpha: \R_{\geq 0} \to \R_{\geq 0}$ is continuous,
  increasing and concave.
\end{lemma}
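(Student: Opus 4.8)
The plan is to work directly from the definition of $\Mcal_\alpha$ in \eqref{eq:mixture_operator} and the monotonicity structure of $m(s,v)$. Recall that for $\psi = \psi_N$ we have $m(s,v) = \int \expebrace{\lambda s - \lambda^2 v / 2} \d F(\lambda)$, which is strictly increasing and continuous in $s$ for each fixed $v$ (dominated convergence; the integrand is bounded by $\expebrace{\lambda s}$ which has finite $F$-integral once $s$ is held fixed, since $F$ is supported on the bounded set $[0,\lambda_{\max})$ with $\lambda_{\max} \le \infty$ — in the genuinely unbounded case one uses that $-\lambda^2 v/2$ dominates), and strictly decreasing in $v$. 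Since $m(0,v) \le 1 < l_0/\alpha$ and $m(s,v) \to \infty$ as $s \to \infty$ (the $\lambda > 0$ part of the integrand blows up), for each $v \ge 0$ there is a unique $s = \Mcal_\alpha(v)$ solving $m(\Mcal_\alpha(v), v) = l_0/\alpha$, and the supremum in \eqref{eq:mixture_operator} is attained. First I would establish this well-definedness and the identity $m(\Mcal_\alpha(v), v) \equiv l_0/\alpha$.

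Monotonicity then follows immediately: if $v_1 < v_2$, then $m(\Mcal_\alpha(v_1), v_2) < m(\Mcal_\alpha(v_1), v_1) = l_0/\alpha = m(\Mcal_\alpha(v_2), v_2)$, and since $m(\cdot, v_2)$ is strictly increasing we get $\Mcal_\alpha(v_1) < \Mcal_\alpha(v_2)$. For continuity, the cleanest route is the implicit function theorem applied to $G(s,v) \defineas m(s,v) - l_0/\alpha$: $G$ is jointly continuous, $\partial_s m(s,v) = \int \lambda \expebrace{\lambda s - \lambda^2 v/2}\d F(\lambda) > 0$ wherever finite, so $\Mcal_\alpha$ is continuous (indeed $C^1$) on $v > 0$; continuity at $v = 0$ follows by a monotone-limit argument using that $\Mcal_\alpha$ is monotone and bounded below, together with $m(\cdot, 0)$ being continuous.

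For concavity, I would differentiate the identity $m(\Mcal_\alpha(v), v) = l_0/\alpha$ implicitly. Writing $M = \Mcal_\alpha(v)$, $m_s = \partial_s m$, $m_v = \partial_v m$, one gets $M' = -m_v/m_s$. A direct computation shows $m_v(s,v) = -\tfrac12\int \lambda^2 \expebrace{\lambda s - \lambda^2 v/2}\d F$ and $m_s(s,v) = \int \lambda \expebrace{\cdots}\d F$, so $M'(v) = \tfrac12\, \mathbb{E}_{Q_v}[\lambda^2] / \mathbb{E}_{Q_v}[\lambda]$ where $Q_v$ is the tilted probability measure with density proportional to $\lambda \expebrace{\lambda M - \lambda^2 v/2}$ against $F$ — equivalently, $M'(v) = \tfrac12 \mathbb{E}_{R_v}[\lambda]$ where $R_v$ has density proportional to $\lambda^2 \expebrace{\cdots}$. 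The hard part will be showing $v \mapsto \mathbb{E}_{R_v}[\lambda]$ is nonincreasing, i.e. that $M''\le 0$. Here the key observation is that the family $\{R_v\}$ is a one-parameter exponential family in the parameter $v$ (the density is $\propto e^{-\lambda^2 v /2}$ times a $v$-independent base measure, with $M$ itself varying — so one must be a little careful and instead tilt in a single combined parameter), and the expectation of a monotone statistic under such a family moves monotonically in the natural parameter; concretely, differentiating $\mathbb{E}_{R_v}[\lambda]$ in $v$ produces (up to sign) a covariance $\mathrm{Cov}_{R_v}(\lambda, \lambda^2)$ plus a term involving $M'$, and one checks the total is $\le 0$. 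An alternative, possibly cleaner, route is to avoid second derivatives entirely: show concavity directly from the definition by a supporting-line argument, using that for fixed $s$ the map $v \mapsto m(s,v)$ is log-convex (it is a Laplace-type integral), hence $\{(s,v) : m(s,v) < l_0/\alpha\}$ — while not itself convex — has the property that its upper boundary $v\mapsto \Mcal_\alpha(v)$ is concave because sublevel sets of the jointly-convex function $(s,v)\mapsto \log m(s,v)$ restricted appropriately... Actually the slickest argument: $\log m(s,v)$ is jointly convex in $(s,v)$ by Hölder, so $\{(s,v): \log m(s,v) \le \log(l_0/\alpha)\}$ is a convex set; its boundary is $\{(\Mcal_\alpha(v), v)\}$, and the upper boundary of a convex set, expressed as $s$ as a function of $v$, is concave. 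I would use this last argument, as it sidesteps the delicate second-derivative estimate; the only point requiring care is checking that the convex sublevel set is exactly the region to the left of the graph of $\Mcal_\alpha$, which follows from monotonicity of $m$ in $s$.
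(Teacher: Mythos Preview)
Your proposal is correct, and the route you settle on for concavity is genuinely different from the paper's. The paper proceeds entirely by differentiation: it differentiates the identity $m(\Mcal_\alpha(v),v)=l_0/\alpha$ once to obtain $\Mcal_\alpha'(v)=A(v)/B(v)$ with $A(v)=\int\psi(\lambda)e^{\lambda\Mcal_\alpha(v)-\psi(\lambda)v}\,\d F(\lambda)$ and $B(v)=\int\lambda e^{\lambda\Mcal_\alpha(v)-\psi(\lambda)v}\,\d F(\lambda)$, both positive, yielding monotonicity; then differentiates again and reorganizes to find $B^2(v)\,\Mcal_\alpha''(v)=-\int B(v)^{-1}[\lambda A(v)-\psi(\lambda)B(v)]^2 e^{\lambda\Mcal_\alpha(v)-\psi(\lambda)v}\,\d F(\lambda)\le 0$, yielding concavity. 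Your final argument---$(s,v)\mapsto\log m(s,v)$ is jointly convex by H\"older, hence the sublevel set $\{(s,v):m(s,v)\le l_0/\alpha\}$ is convex, and by strict monotonicity of $m$ in $s$ this set is precisely the hypograph of $\Mcal_\alpha$, so $\Mcal_\alpha$ is concave---is cleaner: it avoids differentiating under the integral sign and the somewhat opaque algebraic reorganization, and it works verbatim for any nonnegative $\psi$, not just $\psi_N$. Your elementary monotonicity argument (direct comparison of $m$ at two points) is likewise simpler than the paper's derivative computation. The paper's approach does buy an explicit formula for $\Mcal_\alpha'$, which can be useful, but for the lemma as stated your H\"older route is the more economical proof.
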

\begin{proof}
  Continuity of $\Mcal_\alpha(v)$ is clear from the continuity of
  $\expebrace{\lambda s - \psi(\lambda) v}$ in $s$ and $v$, which also implies
  \begin{align}
  \int \expebrace{\lambda \Mcal_\alpha(v) - \psi(\lambda) v} \d F(\lambda)
    = \frac{l_0}{\alpha}
  \end{align}
  for all $v > 0$. That is, the left-hand side is constant in $v$, hence has
  derivative with respect to $v$ equal to zero. We may exchange the derivative
  and integral by Theorem A.5.1 of \citet{durrett_probability:_2017}, noting
  that the integrand is positive and continuously differentiable in $v$ and $F$
  is a probability measure. This yields
  \begin{align}
  \Mcal_\alpha'(v) &= \frac{A(v)}{B(v)} > 0,\\
  \text{where } A(v) &\defineas
    \int \psi(\lambda) e^{\lambda \Mcal_\alpha(v) - \psi(\lambda) v}
      \d F(\lambda) \\
  \text{and } B(v) &\defineas
    \int \lambda e^{\lambda \Mcal_\alpha(v) - \psi(\lambda) v} \d F(\lambda).
  \end{align}
  Both $A(v) > 0$ and $B(v) > 0$ since the integrands are positive, which shows
  that $\Mcal_\alpha$ is increasing. Differentiating again yields, after some
  algebra,
  \begin{align}
  B^2(v) \Mcal_\alpha''(v) &=
    \int \eparen{-\frac{[\lambda A(v) - \psi(\lambda) B(v)]^2}{B(v)}}
        e^{\lambda \Mcal_\alpha(v) - \psi(\lambda) v} \d F(\lambda) \leq 0,
  \end{align}
  since the integrand is now nonpositive, showing that $\Mcal_\alpha$ is
  concave.
\end{proof}

\subsection{Proof of \cref{th:expo_family}}\label{sec:proof_expo_family}

Write $\mu^\star \defineas \E T(X_1)$.We have noted in the discussion preceding
the result that the exponential process
$\expebrace{\lambda S_t(\mu) - t \psi_\mu(\lambda)}$ is the likelihood ratio
testing $H_0: \theta = \theta(\mu)$ against
$H_1: \theta = \theta(\mu) + \lambda$. It is well-known that the likelihood
ratio is a martingale under the null. Hence $(S_t(\mu^\star))$ is
sub-$\psi_{\mu^\star}$ with variance process $V_t = t$, and it follows
immediately that
$\P(\exists t: S_t(\mu^\star) \geq u_{\mu^\star}(t)) \leq \alpha_1$. Apply the
same argument with $-X_t$ in place of $X_t$ to conclude that
$\P(\exists t: -S_t(\mu^\star) \geq \tilde{u}_{\mu^\star}(t)) \leq \alpha_2$. A
union bound completes the argument. \qed

\subsection{Proof of \cref{th:equiv_uniform_defns}}
\label{sec:proof_equiv_uniform_defns}

The implication $(a) \implies (b)$ follows from
\begin{align}
  A_T
  &= \ebracket{\eunion_{t=1}^\infty A_t \intersect \brace{T = t}}
    \union \ebracket{A_\infty \intersect \brace{T = \infty}}
    \subseteq \eunion_{t=1}^\infty A_t.
\end{align}
It is clear that $(b) \implies (c)$. For $(c) \implies (a)$, take
$\tau = \inf\brace{t \in \N: A_t \text{ occurs}}$, so that
$A_\tau = \eunion_{t = 1}^\infty A_t$. \qed

\section{Computing conjugate mixture bounds by root-finding}
\label{sec:root_finding}

In this section we demonstrate that our conjugate mixture boundaries, which
involve the supremum $\Mcal_\alpha(v)$ defined in \eqref{eq:mixture_operator},
can be computed via root-finding. We assume that $\psi$ is CGF-like, a property
which holds for all of the $\psi$ functions in \cref{sec:prelims}:
\begin{definition}[\citealp{howard_exponential_2018}, Definition 2]
  \label{th:cgf_like}
  A real-valued function $\psi$ with domain $[0, \lambda_{\max})$ is called
  \emph{CGF-like} if it is strictly convex and twice continuously differentiable
  with $\psi(0) = \psi'(0_+) = 0$ and
  $\sup_{\lambda \in [0, \lambda_{\max})} \psi(\lambda) = \infty$. For such a
  function, we write
  \begin{align}
    \bar{b} \defineas
      \sup_{\lambda \in [0, \lambda_{\max})} \psi'(\lambda) \in (0, \infty].
  \end{align}
\end{definition}

\Cref{th:basic_mixture} implies that, with probability at least $1 - \alpha$,
$m(S_t, V_t) < l_0 / \alpha$ for all $t$, where
\begin{align}
m(s, v) &= \int \expebrace{\lambda s - \psi(\lambda) v} \d F(\lambda).
\end{align}
We are interested in the set
$A(v) \defineas \brace{s \in \R: m(s, v) < l_0 / \alpha}$ for fixed
$v \geq 0$. It is clear that $m(0, v) \leq 1 < l_0 / \alpha$ whenever
$l_0 \geq 1$ (which holds in all cases we consider), since $\psi \geq 0$,
$v \geq 0$ and $F$ is a probability distribution. So $0 \in A(v)$ always. We
show below that, in addition, $A(v)$ is always an interval.

For one-sided boundaries, $F$ is supported on $\lambda \geq 0$, and so long as
$F$ is not a point mass at zero (which would be an uninteresting mixture),
$m(s, v)$ is strictly increasing in $s$ whenever $m(s, v) < \infty$. Hence
$m(s, v) = l_0 / \alpha$ for at most one value of $s^\star(v) > 0$, in which
case $A(v) = (-\infty, s^\star(v))$.

It is possible that $m(s, v) < l_0 / \alpha$ for all $s$ where the integral
converges. To examine this case, we fix $v > 0$, which is the interesting case
in practice, and make two observations:
\begin{itemize}
\item Whenever $s < \bar{b} v$, we have $m(s, v) < \infty$. Indeed, in this
  case, $\expebrace{\lambda s - \psi(\lambda) v} \to 0$ as $\lambda \to \infty$,
  and as the integrand is continuous in $\lambda$, it must be uniformly
  bounded. It follows immediately that we can have $m(s, v) = \infty$ only when
  $\bar{b} < \infty$.
\item Whenever $\bar{b} < \infty$, we have $S_t \leq \bar{b} V_t$ a.s., a
  consequence of Theorem 1(a) of \citet{howard_exponential_2018}, which shows
  that $\P(\exists t: S_t \geq a + \bar{b} V_t) = 0$ for all $a > 0$. (To verify
  this fact, note we must have $\lambda_{\max} = \infty$ when $\bar{b} < \infty$
  in order for the CGF-like condition
  $\sup_{\lambda \in [0, \lambda_{\max})} \psi(\lambda) = \infty$ to hold.)
\end{itemize}
Hence, when $\bar{b} = \infty$ we need not worry about $m(s, v) = \infty$. When
$\bar{b} < \infty$, it suffices to check $m(\bar{b} v, v)$, which may be
infinite. If $m(\bar{b} v, v) \geq l_0 / \alpha$, then we search for a root
of $m(s, v) = l_0 / \alpha$ in the interval $s \in [0, \bar{b} v]$. If
$m(\bar{b} v, v) < l_0 / \alpha$, it suffices to take
$\Mcal_\alpha(v) = \bar{b} v + \epsilon$ for any $\epsilon > 0$. In practice, it
seems more reasonable to take the upper bound $\bar{b} v$ and use a closed
confidence set instead of an open one.

For two-sided boundaries, when $F$ has support on both $\lambda > 0$ and
$\lambda < 0$, in general we require the technical condition
\begin{align}
  \int \abs{\lambda}^k \expebrace{\lambda s - \psi(\lambda) v} \d F(\lambda)
  < \infty, \quad \text{for $k = 1, 2$.} \label{eq:deriv_condition}
\end{align}
This ensures that we may differentiate $m(s, v)$ twice with respect to $s$,
exchanging the derivative and the integral both times \citep[Theorem
A.5.3]{durrett_probability:_2017}. Hence, whenever condition
\eqref{eq:deriv_condition} holds,
\begin{align}
\frac{\d^2}{\d s^2}\, m(s, v)
  = \int \lambda^2 \expebrace{\lambda s - \psi(\lambda) v} \d F(\lambda) \geq 0,
\end{align}
so that $m(s, v)$ is convex in $s$ for each $v \geq 0$. As
$m(0, v) < l_0 / \alpha$, we conclude that $m(s, v) = l_0 / \alpha$ for at
most one value $s^\star(v) > 0$ and one value $s_\star(v) < 0$, and
$A(v) = (s_\star(v), s^\star(v))$. A similar discussion as above applies when
$\bar{b} < \infty$ and we may have $m(s, v) = \infty$ for some values of $s$.

As \cref{th:two_sided_normal_mixture} yields a closed-form result, only
\cref{th:two_sided_beta_mixture} requires that we verify condition
\eqref{eq:deriv_condition}. From the proof of \cref{th:two_sided_beta_mixture}
in \cref{sec:proof_mixtures}, it suffices to show that
\begin{align}
  \int_0^1 \eabs{\log\pfrac{p}{1-p}}^k p^a (1 - p)^b \d p < \infty
\end{align}
for some $a, b > 0$ and $k = 1, 2$. This follows from the fact that the
integrand is continuous on $p \in (0,1)$ and approaches zero as $p \to 0$ and
$p \to 1$, so it is bounded.

\section{Tuning discrete mixture implementation}
\label{sec:discrete_mix_details}

In \cref{sec:tuning} we have discussed the choice of mixing precision in order
to tune a mixture bound for a particular range of sample sizes. For discrete
mixtures, the value $\overlambda$ must also be chosen, and this depends on the
minimum relevant value of $V_t$: making $\overlambda$ larger will make the
resulting bound tighter over smaller values of $V_t$ at the cost of a looser
bound for larger values of $V_t$. In practice, for $\psi = \psi_G$, setting
$\overlambda = [c + \sqrt{m / 2 \log \alpha^{-1}}]^{-1}$ will ensure the bound
is tight for $V_t \geq m$. Furthermore, when evaluating $\Mtil_\alpha(v)$ in
practice, the sum can be truncated after
$k_{\max} = \ceil{\log_\eta(\overlambda [c + \sqrt{5 v / \log \alpha^{-1}}])}$
terms. The remainder of this section explains these choices.

We wish to understand what range of values of $\lambda$ our discrete mixture
must cover to ensure we get a tight bound for all
$V_t \in [m, v_{\max}]$. At $V_t = m$ the value of $\lambda$ which yields
the optimal linear bound from \cref{th:uniform_chernoff} is found by optimizing
\begin{align}
\frac{\log \alpha^{-1}}{\lambda} + \frac{\psi(\lambda)}{\lambda} \cdot m,
\end{align}
yielding the first-order condition
\begin{align}
\lambda \psi'(\lambda) - \psi(\lambda) = \frac{\log \alpha^{-1}}{m}.
\end{align}
For $\psi = \psi_G$, this becomes
\begin{align}
\frac{\lambda^2}{2(1-c\lambda)^2} = \frac{\log \alpha^{-1}}{m},
\end{align}
which is solved by
\begin{align}
\lambda^\star(m) = \frac{1}{c + \sqrt{m / 2 \log \alpha^{-1}}}.
\end{align}
Large values of $\lambda$ are necessary to achieve tight bounds for small
$V_t$. Hence, to ensure good performance at $V_t = m$ we choose
$\overlambda = [c + \sqrt{m / 2 \log \alpha^{-1}}]^{-1}$. Similarly, to ensure
the sum safely covers $V_t = v$ we ensure
$\lambda_{k_{\max}} \leq [c + \sqrt{10 v / 2 \log \alpha^{-1}}]^{-1}$ (using an
arbitrary ``fudge factor'' of ten), which yields
$k_{\max} = \ceil{\log_\eta(\lambda_{\max} [c + \sqrt{5 v / \log
    \alpha^{-1}}])}$.

We note that $\eta$ must also be chosen, but the only tradeoff here is
computational. Smaller values of $\eta$ lead to more accurate approximations of
the discrete mixture to the target continuous mixture, but require more terms to
be summed. We have found $\eta = 1.1$ to provide excellent approximations in the
examples we have examined.

\section{Intrinsic time, change of units and minimum time conditions}
\label{sec:change_units}

In this section we point out that a bound expressed in terms of intrinsic time
yields an infinite family of related bounds via scaling, and that ``minimum
time'' conditions in such bounds (such as $m \bmax V_t$ in \cref{th:stitching})
can be freely scaled as well. Suppose we have a uniform bound of the form
\begin{align}
  \P\eparen{\exists t \geq 1 : S_t \geq u_c(m \bmax V_t)}
    \leq \alpha,
  \label{eq:change_units_bound}
\end{align}
where intrinsic time $V_t$ has the same units as $S_t^2$, as usual, and $c$ is
some parameter with the same units as $S_t$. Then, fixing any $\gamma > 0$ and
applying the bound \eqref{eq:change_units_bound} to the scaled observations
$X_t / \sqrt{\gamma}$, which amounts to a change of units, we have
\begin{align}
\alpha
  &\geq \P\eparen{
    \exists t \geq  1 :
    \frac{S_t}{\sqrt{\gamma}} \geq
    u_{c / \sqrt{\gamma}}\eparen{m \bmax \frac{V_t}{\gamma}}
  } \\
  &= \P\eparen{\exists t \geq 1 : S_t \geq h_c(\gamma m \bmax V_t)},
    \quad \text{where } h_c(v) \defineas
      \sqrt{\gamma} u_{c / \sqrt{\gamma}}\pfrac{v}{\gamma}.
\end{align}
By changing units we have obtained a new bound on $S_t$ with different minimum
time $\gamma m$ and a different shape. For example, applying this change of
units to the stitched boundary \eqref{eq:stitching_operator} with $m = 1$ yields
the family of bounds
\begin{align}
\P\eparen{
  \exists t \geq 1 :
  S_t \geq k_1 \sqrt{(\gamma \bmax V_t)\ell\pfrac{\gamma \bmax V_t}{\gamma}}
  + c k_2 \ell\pfrac{\gamma \bmax V_t}{\gamma}
} \leq \alpha
\end{align}
for any $\gamma > 0$, with the definition of $\ell$ unchanged from
\eqref{eq:stitching_operator}. Note only the argument of $\ell$ has been
scaled. We started with a single bound \eqref{eq:stitching_operator} expressed
in terms of $V_t$ and ended up with a family of bounds on the same process
$S_t$, one for each value of $\gamma$. Indeed, the tuning parameter $m$ in
\cref{th:stitching} is obtained by exactly this argument. The effect is more
clear if we let $c = 0$ and examine the upper bound on the normalized process
$S_t / \sqrt{V_t}$: then for any $\gamma > 0$, with probability at least
$1 - \alpha$,
\begin{align}
\frac{S_t}{\sqrt{V_t}} \leq \begin{cases}
  k_1 \sqrt{\ell\pfrac{V_t}{\gamma}}, & \text{when } V_t \geq \gamma, \\
  k_1 \sqrt{\frac{\gamma \ell(1)}{V_t}}, & \text{when } V_t < \gamma.
\end{cases}
\end{align}
Now the right-hand depends on $V_t$ only through $V_t / \gamma$, so that the
effect of changing $\gamma$ is simply to multiplicatively shift the bound
backwards or forwards in time without changing the bounded process.

\section{Detailed comparison of finite LIL bounds}
\label{sec:finite_lil_details}

\begin{figure}
\centering
\includegraphics{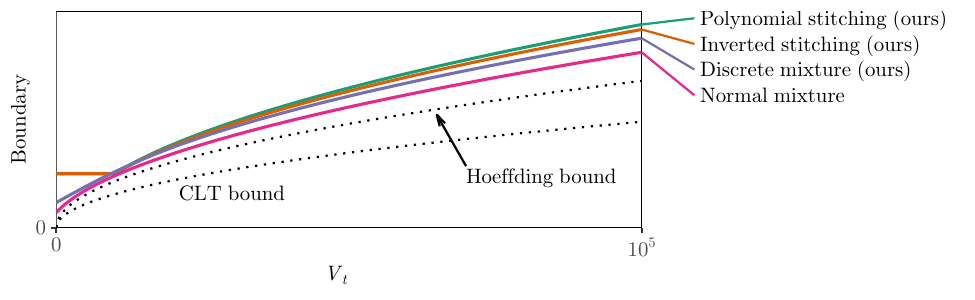}
\caption{Pointwise and uniform bounds for independent 1-sub-Gaussian
  observations, $\alpha = 0.025$. All tuning parameters are chosen to optimize
  roughly for time $V_t = 10^4$. The dotted lines show the Hoeffding bound
  $\sqrt{2 V_t \log \alpha^{-1}}$, which is nonasymptotically pointwise valid,
  and the CLT bound $z_{1-\alpha} \sqrt{V_t}$, which is asymptotically pointwise
  valid. Polynomial stitching uses \cref{th:stitching} with $\eta=2.04$,
  $m = 10^4$, and $h(k) = (k+1)^{1.4} \zeta(1.4)$. The inverted stitching
  boundary is
  $1.7 \sqrt{(V_t \bmax 10^4) (\log(1 + \log((V_t \bmax 10^4) / 10^4) + 3.5)}$,
  using \cref{th:inverted_stitching} with $\eta = 2.99$, $v_{\max} = 10^{20}$,
  and error rate $0.82\alpha$ to account for finite horizon and ensure a fair
  comparison. Discrete mixture applies \cref{th:discrete_mixture} to the density
  $f(\lambda) = 0.4 \cdot \indicator{0 \leq \lambda \leq \lambda_{\max}} /
  [\lambda \log^{1.4}(\lambda_{\max} e / \lambda)]$ with $\eta = 1.1$ and
  $\lambda_{\max} = 0.044$; see \cref{sec:stitching_mixture} for motivation. The
  normal mixture bound \eqref{eq:normal_mixture_closed} uses $\rho =
  1260$. \label{fig:finite_lil_boundaries}}
\end{figure}

\begin{figure}
\centering
\includegraphics{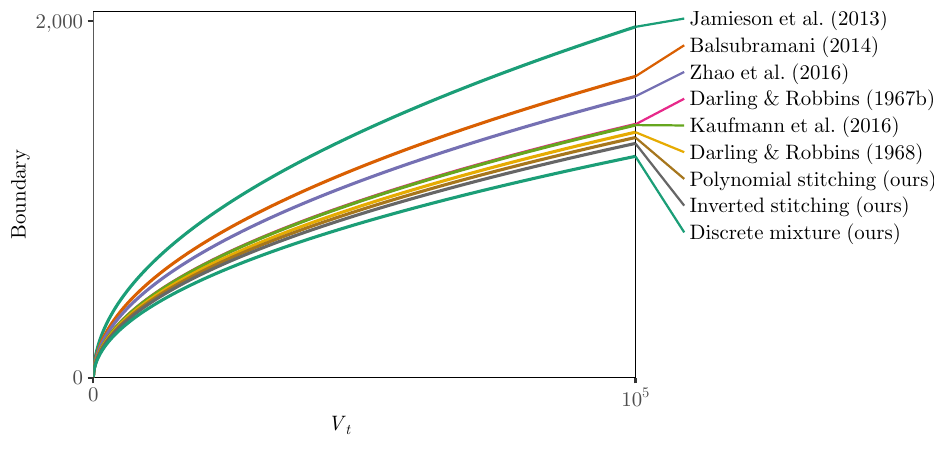}
\caption{Finite LIL bounds for independent 1-sub-Gaussian observations,
  $\alpha = 0.025$. The dotted lines show the Hoeffding bound
  $\sqrt{2 V_t \log \alpha^{-1}}$, which is nonasymptotically pointwise valid,
  and the CLT bound $z_{1-\alpha} \sqrt{V_t}$, which is asymptotically pointwise
  valid. Polynomial stitching uses \cref{th:stitching} with $\eta=2.04$ and
  $h(k) = (k+1)^{1.4} \zeta(1.4)$. The inverted stitching boundary is
  $1.7 \sqrt{V_t (\log(1 + \log V_t) + 3.5)}$, using
  \cref{th:inverted_stitching} with $\eta = 2.99$, $v_{\max} = 10^{20}$, and
  error rate $0.82\alpha$ to account for finite horizon. Discrete mixture
  applies \cref{th:discrete_mixture} to the density
  $f(\lambda) = 0.4 \cdot \indicator{0 \leq \lambda \leq 4} / [\lambda
  \log^{1.4}(4e / \lambda)]$ with $\eta = 1.1$, and $\lambda_{\max} = 4$; see
  \cref{sec:stitching_mixture} for motivation. The normal mixture bound
  \eqref{eq:normal_mixture_closed} uses $\rho = 0.129$. See
  \cref{sec:finite_lil_details} for details. \label{fig:finite_lil_all}}
\end{figure}

\Cref{fig:finite_lil_boundaries,fig:finite_lil_all} compare our finite LIL
bounds to several existing bounds. Below we restate the original results from
the various papers giving finite LIL bounds included in
\cref{fig:finite_lil_all}. In table~\ref{tab:finite_lil_bounds}, for ease of
  comparison, we write all bounds in the form
\begin{align}
\P(\exists t \geq 1 : S_t \geq A \sqrt{t (\log \log B t + C)},
\end{align}
valid for independent 1-sub-Gaussian observations. When the original bound holds
only for $t \geq n$ instead of $t \geq 1$, we apply a change of units argument
to replace $\log \log B t$ with $\log \log B n t$ and $t \geq n$ with
$t \geq 1$, so that all bounds are comparable (see
\cref{sec:change_units}). When bounds are expressed in terms of intrinsic time
$V_t$ \citep{balsubramani_sharp_2014}, this is formally justified. When they are
expressed in terms of nominal time
\citep{darling_iterated_1967,darling_further_1968} this is only a heuristic
argument, but we conjecture that proofs of such bounds could be generalized to
justify this scaling. When observations are i.i.d.\ from an infinitely divisible
distribution, the change is formally justified by replacing each observation
$X_i$ with a sum of $n$ i.i.d.\ ``pseudo-observations'' $Z_i$ such that
$\sum_{i=1}^n Z_i \sim X_1$.

\begin{itemize}
\item \citet{jamieson_best-arm_2014}, Lemma 1: for i.i.d.\ sub-Gaussian
  observations with variance parameter $\sigma^2$,
  \begin{align}
    \P\eparen{
      \exists t \geq 1 :
      S_t \geq (1 + \sqrt{\epsilon}) \sqrt{
        2 \sigma^2 (1 + \epsilon) t \log\pfrac{\log((1 + \epsilon) t)}{\delta}
      }
    } \leq 1 - \frac{2 + \epsilon}{\epsilon}
        \pfrac{\delta}{\log(1 + \epsilon)}^{1 + \epsilon}.
  \end{align}
\item \citet{zhao_adaptive_2016}, Theorem 1: for sub-Gaussian observations with
  variance parameter $1/4$,
  \begin{align}
    \P\eparen{\exists t \geq 1 : S_t \geq \sqrt{a t \log(\log_c t + 1) + b t}}
      \leq \zeta(2a / c) e^{-2b / c}.
  \end{align}
\item \citet{kaufmann_complexity_2014}, Lemma 7: for independent sub-Gaussian
  observations with variance parameter $\sigma^2$,
  \begin{align}
    \P\eparen{
      \exists t \geq 1 : S_t \geq \sqrt{2 \sigma^2 t (x + \eta \log \log(e t))}
    } \leq \sqrt{e} \zeta\eparen{\eta \eparen{1 - \frac{1}{2x}}}
           \eparen{\frac{\sqrt{x}}{2\sqrt{2}} + 1}^\eta e^{-x}
    \label{eq:kaufmann_error}
  \end{align}
\item \citet{balsubramani_sharp_2014}, Theorem 4: for $\abs{X_t} \leq c_t$
  a.s. and $V_t = \sum_{i=1}^t c_i^2$,
  \begin{align}
    \P\eparen{
      \exists t \geq 1 : V_t \geq 173 \log\pfrac{2}{\alpha} :
      S_t \geq \sqrt{3 V_t (2 \log \log (3V_t / 2 S_t) + \log \alpha^{-1})}
    } \leq \alpha.
  \end{align}
  Though the bound is stated for bounded observations, the proof holds for any
  observations sub-Gaussian with variance parameters $(c_t^2)$, as noted in
  section 5.2 of \citet{balsubramani_sharp_2014}. Balsubramani suggests removing
  the initial time condition by imposing a constant bound over
  $t \leq 173 \log(2/\alpha)$ (section 5.3). We instead remove the condition by
  a change of units, as discussed in \cref{sec:change_units}.
\item \citet{darling_iterated_1967}, eq. 22: for i.i.d.\ observations
  sub-Gaussian with variance parameter 1,
  \begin{align}
    \P\eparen{
      \exists t \geq \eta^j :
      S_t \geq \frac{1 + \eta}{2 \sqrt{\eta}}
      \sqrt{t(2 c \log \log t - 2 c \log \log \eta + 2 \log a)}
    } \leq \frac{1}{a (c - 1)(j - 1/2)^{c - 1}}.
  \end{align}
  Darling and Robbins consider results for a general bound $\varphi(\lambda)$ on
  the moment-generating function of the observations. The result involves the
  term $h(v_t)$ where the function
  $h(\lambda) \defineas 1/2 + \lambda^{-2} \log \varphi(\lambda)$ and $v_t$ is
  unspecified but bounded.
\item \citet{darling_further_1968}, eq. 2.2 and the example that follows: for
  i.i.d.\ observations sub-Gaussian with variance parameter 1,
  \begin{align}
    \P\eparen{\exists t \geq 3 : S_t \geq A \sqrt{t(\log \log t + C)}} \leq
    \int_m^\infty \frac{A \sqrt{\log\log t + C}}{t}
      \expebrace{-\frac{A^2(\log\log t + C)}{2}} \d t.
    \label{eq:dr_1968_error}
  \end{align}
  Darling and Robbins give a closed-form upper bound for the right-hand side of
  \eqref{eq:dr_1968_error}. We instead evaluate it numerically, using
  readily-available implementations of the upper incomplete gamma function:
  \begin{multline}
    \int_m^\infty \frac{A \sqrt{\log\log t + C}}{t}
    \expebrace{-\frac{A^2(\log\log t + C)}{2}} \d t \\
    = \frac{\sqrt{2\pi} A e^{-C}}{(A-2)^{3/2}}\,
      \P\eparen{G \geq \frac{A^2 - 2}{2} (\log \log m + C)},
  \end{multline}
  where $G \sim \Gamma(3/2, 1)$.
\item Polynomial stitching as in \eqref{eq:poly_stitching} with $c = 0$.
\item Inverted stitching with $g(v) = A \sqrt{v (\log \log(e v) + C)}$ as in
  \eqref{eq:lil_inverted}. We set $v_{\max} = 10^{20}$ which covers 42 epochs
  with $\eta = 2.994$. To make for a fair comparison with polynomial stitching,
  observe that in 42 epochs with $s = 1.4$, polynomial stitching ``spends''
  $\sum_{k=1}^{42} k^{-1.4} / \zeta(1.4) \approx 0.820$ of its crossing
  probability $\alpha$, so we run inverted stitching with
  $\alpha = 0.820 \cdot 0.025$.
\item Normal mixture as in \eqref{eq:normal_mixture_closed} with
  $\rho \approx 0.13$:
  \begin{align}
    u(v) \approx
      \sqrt{2 (v + 0.13) \log\eparen{20\sqrt{1+\frac{v}{0.13}} + 1}}.
  \end{align}
  This is not a LIL boundary, so is not included in
  \cref{tab:finite_lil_bounds}.
\end{itemize}

\begin{table}[h]
\centering
\begin{tabular}{rccc}
\toprule
Source and parameter settings & $A$ & $B$ & $C$ \\
\midrule
\citet{jamieson_best-arm_2014} &
  $(1 + \sqrt{\epsilon})\sqrt{2(1 + \epsilon)}$ &
  $1 + \epsilon$ &
  $\frac{1}{1 + \epsilon}
   \log\pfrac{2 + \epsilon}{\alpha\epsilon\log^{1 + \epsilon}(1+\epsilon)}$\\
$\epsilon = 0.033$ & (1.7) & (1.033) & (10.966) \\
\midrule
\citet{balsubramani_sharp_2014} &
  $\sqrt{6}$ &
  $\frac{865}{2} \log\pfrac{2}{\delta}$ &
  $(\log \alpha^{-1}) / 2$ \\
& (2.45) & (1137) & (1.844) \\
\midrule
\citet{zhao_adaptive_2016} &
  $2\sqrt{a}$ &
  $c$ &
  $\frac{c}{2a} \log\pfrac{\zeta(2a/c)}{\alpha \log^{2a/c} c}$ \\
$a=0.7225, c=1.1$ & (1.7) & (1.1) & (6.173) \\
\midrule
\citet{darling_iterated_1967} &
  $(1 + \eta) \sqrt{\frac{c}{2\eta}}$ &
  $\eta^j$ &
  $\frac{1}{c} \log\pfrac{1}{\alpha (c - 1)(j - 1/2)^{c-1} \log^c \eta}$ \\
$j=1, c=1.4, \eta=1.429$ & (1.7) & (1.429) & (4.518) \\
\midrule
\citet{kaufmann_complexity_2014} &
  $\sqrt{2 \eta}$ &
  $e$ &
  $x(\alpha, \eta) / \eta$ \\
$\eta = 1.3$ & (1.7) & (2.718) & (4.427) \\
\midrule
\citet{darling_further_1968} & $A$ & 3 & $C(\alpha, A)$ \\
$A = 1.7$ & $(1.7)$ & $(3)$ & $(3.945)$ \\
\midrule
Polynomial stitching \eqref{eq:poly_stitching} &
  $(\eta^{1/4} + \eta^{-1/4})\sqrt{\frac{s}{2}}$ &
  $\eta$ &
  $\frac{1}{s} \log \frac{\zeta(s)}{\alpha \log^s \eta}$ \\
$s=1.4, \eta = 2.041$ & (1.7) & (2.041) & (3.782) \\
\midrule
Inverted stitching (\cref{th:inverted_stitching}) &
  $A$ & $e$ & $C(\alpha, A, \eta)$ \\
$\eta = 2.994$, nominal error rate $0.82 \alpha$ & (1.7) & (2.718) & (3.454) \\
\bottomrule
\end{tabular}
\caption{Comparison of parameters $A,B,C$ for finite LIL boundaries expressed in
  the form
  ${\P(\exists t \geq 1 : S_t \geq A \sqrt{t (\log \log B t + C)}) \leq \alpha}$
  for sums of independent 1-sub-Gaussian observations, with $\alpha = 0.025$. Functions $x(\alpha, \eta)$ and $C(\alpha, \dots)$ are given
  by numerical root-finding to set the corresponding error bound equal to
  $\alpha$. \label{tab:finite_lil_bounds}}
\end{table}

\section{Details of Example 1}\label{sec:example_details}

Write $X_t = \mu + \sigma Z_t$ for $t = 1, 2, \dots$ where $Z_1, Z_2, \dots$ are
i.i.d.\ $\Normal(0,1)$ random variables. Substituting into the definition of
$S_t$, we find
\begin{align}
  S_t = \sum_{i=1}^{t+1} \eparen{Z_i - \Zbar_{t+1}}^2 - t,
\end{align}
where $\Zbar_t \defineas t^{-1} \sum_{i=1}^t Z_i$. Evidently the distribution of
$S_t$ depends on neither $\mu$ nor $\sigma^2$. Furthermore, direct calculation
shows that the increments of $(S_t)$ may be written as
\begin{align}
  \Delta S_t = S_t - S_{t-1} &= \frac{t}{t+1} \eparen{Z_{t+1} - \Zbar_t}^2 - 1\\
    &\defineright Y_t^2 - 1,
\end{align}
where we define $Y_t \defineas \sqrt{t/(t+1)} (Z_{t+1} - \Zbar_t)$ for
$t = 1, 2, \dots$ (and take $S_0 = 0$ by convention). Noting that
$Z_{t+1} \sim \Normal(0,1)$ is independent of $\Zbar_t \sim \Normal(0, t^{-1})$,
we see that $Y_t \sim \Normal(0, 1)$ for each $t$. Finally, a straightforward
calculation shows that $\E Y_i Y_j = 0$ for all $i \neq j$, so that
$Y_1, Y_2, \dots$ are i.i.d. It follows that $\Delta S_1, \Delta S_2, \dots$ are
i.i.d.\ centered Chi-squared random variables each with one degree of
freedom. The CGF of this distribution is
\begin{align}
  \log \E e^{\lambda \Delta S_1} = -\frac{\log(1 - 2\lambda)}{2} - \lambda,
  \quad \text{for all } \lambda < \frac{1}{2}. \label{eq:chisq_cgf}
\end{align}
which is equal to $2 \psi_E(\lambda)$ with scale $c = 2$. As the increments of
$(S_t)$ are i.i.d., it suffices for Definition 1 to have
$\log \E e^{\lambda \Delta S_t} \leq \psi(\lambda) \Delta V_t$, and we have
shown this holds with equality.

We have shown that $(S_t)$ is sub-exponential with scale $c = 2$ and variance
process $V_t = 2t$. Recall that \cref{th:canonical_assumption} depends only on
$\lambda \geq 0$. However, since \eqref{eq:chisq_cgf} holds for all
$\lambda < 1/2$ and not just $0 \leq \lambda < 1/2$, replacing $\Delta S_t$ with
$-\Delta S_t$ shows that $(-S_t)$ is sub-exponential with scale $c = -2$.

\section{Extension to smooth Banach spaces and continuous-time processes}
\label{sec:banach}

Though we have focused on discrete-time processes taking values in $\R$ or
$\Scal^d$, our uniform boundaries also apply to discrete-time martingales in
general smooth Banach spaces and to real-valued, continuous-time martingales. In
this section we briefly review concepts from \citet[Sections
3.4-3.5]{howard_exponential_2018} to highlight the possibilities.
First, let $(Y_t)_{t \in \N}$ be a martingale taking values in a separate Banach
space $(\Xcal, \norm{\cdot})$. Our uniform boundaries apply to any function
$\Psi: \Xcal \to \R$ satisfying the following property:

\begin{definition}[\citep{pinelis_optimum_1994}]
  A function $\Psi : \Xcal \to \R$ is called \emph{$(2,D)$-smooth} for some
  $D > 0$ if, for all $x, v \in \Xcal$, we have (a) $\Psi(0) = 0$, (b)
  $\abs{\Psi(x + v) - \Psi(x)} \leq \norm{v}$, and (c)
  $\Psi^2(x + v) - 2 \Psi^2(x) + \Psi^2(x - v) \leq 2D^2 \norm{v}^2$.
\end{definition}

For example, the norm induced by the inner product in any Hilbert space is
$(2,1)$-smooth, and the Schatten $p$-norm is $(2,\sqrt{p-1})$-smooth for
$p \geq 2$.

\begin{corollary}\label{th:banach}
  Let $(Y_t)_{t \in \N}$ be a martingale taking values in a separable Banach
  space $(\Xcal,\norm{\cdot})$, and $\Psi: \Xcal \to \R$ is $(2,D)$-smooth; denote
  $D_\star \defineas 1 \bmax D$.
  \begin{enumerate}[label=(\alph*)]
  \item Suppose $\smash{\norm{\Delta Y_t} \leq c_t}$ a.s. for all $t$ for
    constants $(c_t)$. Then, for any sub-Gaussian boundary $f$ with crossing
    probability $\alpha$ and $\smash{l_0 = 2}$, we~have
    \begin{align}
    \small
      \P\eparen{
        \exists t \geq 1: \Psi(Y_t) \geq f\eparen{D_\star^2 \sum_{i=1}^t c_i^2}
      } \leq \alpha.
    \end{align}
  \item Suppose $\norm{\Delta Y_t} \leq c$ a.s. for all $t$ for
     $\smash{c > 0}$. Then, for any sub-Poisson boundary $f$ with crossing
    probability $\alpha$, $l_0 = 2$, and scale $c$, we have
    \begin{align}
    \small
      \P\eparen{
        \exists t \geq 1: \Psi(Y_t)
        \geq f\eparen{D_\star^2 \sum_{i=1}^t \E_{i-1} \norm{X_i}^2}
      } \leq \alpha.
    \end{align}
  \end{enumerate}
\end{corollary}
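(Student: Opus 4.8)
The plan is to reduce \cref{th:banach} to the definition of a sub-$\psi$ uniform boundary, \cref{th:uniform_boundary}, by way of \cref{th:canonical_assumption}: it suffices to show that $(\Psi(Y_t))$ is $2$-sub-$\psi$ with the indicated variance process, where in part (a) we take $\psi = \psi_N$ and $V_t \defineas D_\star^2 \sum_{i=1}^t c_i^2$, and in part (b) we take $\psi = \psi_{P,c}$ and $V_t \defineas D_\star^2 \sum_{i=1}^t \E_{i-1}\norm{\Delta Y_i}^2$. Once this sub-$\psi$ property is established, feeding the given boundary $f$ — which is stated with $l_0 = 2$ — into \cref{th:uniform_boundary} produces the claimed crossing-probability bounds directly.

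To verify the sub-$\psi$ property I would, for each fixed $\lambda$, build the dominating supermartingale out of Pinelis's smoothness inequality \citep{pinelis_optimum_1994}, in the form developed in \citet[Sections 3.4--3.5]{howard_exponential_2018}. The essential one-step estimate there is that $(2,D)$-smoothness of $\Psi$ transfers a one-dimensional exponential bound to $\Psi(Y_t)$ after replacing $\exp$ by $\cosh$: writing $M_t(\lambda) \defineas \cosh(\lambda\Psi(Y_t))\expebrace{-\psi(\lambda) V_t}$, one obtains $\E_{t-1} M_t(\lambda) \leq M_{t-1}(\lambda)$, using Hoeffding's lemma for the bounded increments in (a) and a Bennett/Poisson-type bound in (b), with the smoothness constant absorbed into $D_\star = 1 \bmax D$. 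Then $L_t(\lambda) \defineas 2 M_t(\lambda)$ is a nonnegative supermartingale; since $\Psi(0) = 0$ we have $\E L_0(\lambda) = 2\cosh(0) = 2$, and since $e^x \leq e^x + e^{-x} = 2\cosh x$ we have $\expebrace{\lambda\Psi(Y_t) - \psi(\lambda) V_t} \leq L_t(\lambda)$, which is exactly \cref{th:canonical_assumption} with $l_0 = 2$.

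The main obstacle is precisely this one-step $\cosh$ estimate, which is the only place $(2,D)$-smoothness is used essentially, and which I would cite from \citet{howard_exponential_2018} rather than reprove; the remaining work is bookkeeping to confirm that their hypotheses line up with our choices of $\psi$ and $V_t$ — in particular that a per-increment bound $\norm{\Delta Y_t} \leq c$ in (b) yields a sub-Poisson scale exactly $c$ after the smoothness reduction, and that using $D_\star$ rather than $D$ in the variance process is what keeps the bound valid when $D < 1$, where smoothness condition (b) rather than (c) is binding. Part (a) is slightly simpler, since Hoeffding's lemma gives a sub-Gaussian parameter $c_t^2$ irrespective of the conditional means. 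Everything else — nonnegativity of $L_t(\lambda)$, and the final implication from \cref{th:canonical_assumption} to the uniform crossing bound via \cref{th:uniform_boundary} — is immediate.
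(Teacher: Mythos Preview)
Your proposal is correct and follows essentially the same route as the paper: both reduce the claim to verifying that $S_t = \Psi(Y_t)$ satisfies \cref{th:canonical_assumption} with $l_0 = 2$ and the stated variance process, citing the Pinelis $\cosh$-supermartingale construction from \citet{howard_exponential_2018} for that step, and then invoke \cref{th:uniform_boundary}. You have supplied more of the internal detail of that construction (the $\cosh$ bound, why $l_0 = 2$, and the role of $D_\star$) than the paper's one-sentence citation does, but the argument is the same.
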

The result follows directly from the proof of Corollary 10 in
\citet{howard_exponential_2018}, which shows that $S_t = \Psi(Y_t)$ is
sub-Gaussian or sub-Poisson with appropriate variance process $(V_t)$ for each
case, building upon the work of
\citet{pinelis_approach_1992,pinelis_optimum_1994}. For example, let $(Y_t)$ be
a martingale taking values in any Hilbert space, with $\norm{\cdot}$ the induced
norm, and suppose $\norm{\Delta Y_t} \leq 1$ a.s. for all $t$. Then
\cref{th:banach}(a) with a normal mixture bound yields
\begin{align}
\P\eparen{
  \exists t \geq 1: \norm{Y_t} \geq
  \sqrt{(t + \rho) \log\eparen{\frac{4 (t + \rho)}{\alpha^2 \rho}}}
} \leq \alpha.
\end{align}

Next, let $(S_t)_{t \in \R_{\geq 0}}$ be a continuous-time, real-valued
process. Replacing discrete-time processes in \cref{th:canonical_assumption}
with continuous-time processes, and invoking the continuous-time version of
Ville's inequality, our stitched, mixture and inverted stitching results extend
straightforwardly to continuous time. Below we give 
two examples which follow from Fact 2 of
\citet{howard_exponential_2018}. Here $\eangle{S}_t$ denotes the predictable
quadratic variation of $(S_t)$.

\begin{corollary}\label{th:continuous_time}
  Let $(S_t)_{t \in \R_{\geq 0}}$ be a real-valued process.
  \begin{enumerate}[label=(\alph*)]
  \item If $(S_t)$ is a locally square-integrable martingale with
    a.s. continuous paths, and $f$ is a sub-Gaussian stitched, mixture or
    inverted stitching uniform boundary, then
    $\P(\exists t \in (0, \infty): S_t \geq f(\eangle{S}_t)) \leq e^{-2ab}$.
  \item If $(S_t)$ is a local martingale with $\Delta S_t \leq c$ for all $t$,
    and $f$ is a sub-Poisson mixture bound for scale $c$ or a sub-gamma
    stitched bound for scale $c/3$, then
    $\P(\exists t \in (0, \infty): S_t \geq f(\eangle{S}_t)) \leq \alpha$.
  \end{enumerate}
  
\end{corollary}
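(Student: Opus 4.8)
The plan is to reduce \cref{th:continuous_time} to two facts: that $(S_t)$ is sub-$\psi$ with variance process $\eangle{S}_t$ in the continuous-time analogue of \cref{th:canonical_assumption} for the relevant $\psi$, and that each of our three boundary constructions transfers verbatim to continuous time. The point is that the proofs of \cref{th:stitching,th:basic_mixture,th:inverted_stitching} never use discreteness of the time index in an essential way: they combine (i) the supermartingale property of $\exp\{\lambda S_t - \psi(\lambda)\eangle{S}_t\}$ (or of an upper-bounding supermartingale), (ii) Ville's maximal inequality for nonnegative supermartingales, (iii) countable union bounds over geometrically-spaced epochs or mixture support points, and (iv) pointwise algebraic manipulations of $\lambda s - \psi(\lambda) v$. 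Replacing the discrete-time Ville inequality by its continuous-time version for right-continuous nonnegative supermartingales, and reading ``$\exists t \geq 1$'' as ``$\exists t \in (0,\infty)$'', then yields the result with the same crossing probabilities.

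First I would establish the continuous-time sub-$\psi$ property, which is Fact~2 of \citet{howard_exponential_2018}. For part~(a), a continuous locally square-integrable martingale has $\exp\{\lambda S_t - \tfrac{\lambda^2}{2}\eangle{S}_t\}$ a nonnegative local martingale, hence (by Fatou) a supermartingale, for every $\lambda \geq 0$; so $(S_t)$ is sub-$\psi_N$ with variance process $\eangle{S}_t$, and the sub-Gaussian stitched, mixture, and inverted-stitching boundaries all apply. For part~(b), if $\Delta S_t \leq c$ then a Bennett-type estimate gives that $\exp\{\lambda S_t - \psi_{P,c}(\lambda)\eangle{S}_t\}$ is a nonnegative supermartingale for $\lambda \geq 0$, so $(S_t)$ is sub-$\psi_{P,c}$ with variance process $\eangle{S}_t$; this handles the sub-Poisson mixture bound directly, and, since a sub-Poisson boundary of scale $c$ is also a sub-gamma boundary of scale $c/3$ (\cref{th:psi_boundary_relations}), it also handles the sub-gamma stitched bound of scale $c/3$. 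Inverted stitching is a purely sub-Gaussian construction, so it enters only in part~(a).

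Next I would re-run each construction. For stitching and inverted stitching, partition the range of the intrinsic time into epochs $\eta^k \leq \eangle{S}_t < \eta^{k+1}$; because the predictable quadratic variation $\eangle{S}_t$ is continuous and nondecreasing in both cases, the epoch structure behaves exactly as in the discrete-time proofs, within each epoch one applies the continuous-time form of the linear-boundary bound (the continuous-time version of \cref{th:uniform_chernoff}, itself an instance of Ville applied to $\exp\{\lambda_k S_t - \psi(\lambda_k)\eangle{S}_t\}$), and a union bound over $k$ finishes. For the method of mixtures I would reprise the proof of \cref{th:basic_mixture} word for word: augment the probability space with an independent $\lambda \sim F$, use the product-measurability hypothesis to see that $M_t = \int \exp\{\lambda S_t - \psi(\lambda)\eangle{S}_t\}\,\d F(\lambda)$ is dominated by a nonnegative supermartingale with initial expectation at most $l_0$, and apply continuous-time Ville to it.

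The main obstacle is path regularity, since continuous-time Ville's inequality needs a right-continuous (with left limits) modification of the supermartingale in question. In part~(a) continuity of $S$ and of $\eangle{S}$ makes every exponential process continuous, so this is automatic. In part~(b) one must work with right-continuous versions of the local martingale and its exponential supermartingales, and for the mixture bound additionally check that $M_t$ admits a right-continuous modification; this follows from right-continuity of each $\exp\{\lambda S_t - \psi(\lambda)\eangle{S}_t\}$ together with the product-measurability assumption of \cref{th:basic_mixture} and dominated convergence along a dense set of times. A secondary point is that one wants the intrinsic time not to ``jump past'' an epoch boundary, which is precisely why we use the continuous predictable quadratic variation $\eangle{S}_t$ rather than the optional one, guaranteeing that the union bound over epochs still covers every crossing event. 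Once these regularity issues are handled, the algebra of the discrete-time proofs carries over unchanged.
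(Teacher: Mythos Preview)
Your proposal is essentially the paper's own approach: the paper proves \cref{th:continuous_time} by invoking Fact~2 of \citet{howard_exponential_2018} to establish the continuous-time sub-$\psi$ condition (sub-Gaussian in case~(a), sub-Poisson in case~(b)) and then noting that the stitched, mixture, and inverted-stitching constructions carry over once discrete-time Ville is replaced by its continuous-time analogue. Two small corrections: first, you have the implication direction reversed when you write ``a sub-Poisson boundary of scale $c$ is also a sub-gamma boundary of scale $c/3$''---the correct statement (row~(5) of \cref{tab:psi_relations}) is that a sub-gamma boundary of scale $c/3$ is a sub-Poisson boundary of scale $c$, which is exactly what you need to apply the stitched bound to a sub-$\psi_{P,c}$ process; second, your concern that $\eangle{S}_t$ might ``jump past'' an epoch boundary is unnecessary, since the stitching proof shows $\Scal_\alpha(v) \geq \min_k g_{\lambda_k,r_k}(v)$ pointwise in $v$, so any crossing of $\Scal_\alpha$ forces a crossing of some linear boundary regardless of whether $V_t$ is continuous (and indeed the predictable quadratic variation need not be continuous in case~(b)).
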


For example, if $(S_t)$ is a standard Brownian motion, then
\cref{th:continuous_time}(a) with a polynomial stitched boundary yields, for
any $\eta > 1, s > 1$,
{\small
\begin{align*}
  \P\eparen{
    \exists t \in (0, \infty): S_t \geq
    \frac{\eta^{1/4} + \eta^{-1/4}}{\sqrt{2}} \sqrt{
      (1 \bmax t) \eparen{s \log\log(\eta (1 \bmax t))
      + \log \frac{\zeta(s)}{\alpha \log^s \eta}}
    }
  } \leq \alpha.
\end{align*}
}

\section{Sufficient conditions for Definition 1}
\label{sec:reference_tables}

\Cref{tab:scalar_suff_cond} offers a summary of sufficient conditions for
\cref{th:canonical_assumption} to hold when $(S_t)$ is a scalar process, while
\cref{tab:matrix_suff_cond} gives conditions for matrix-valued processes. See
\citet[Section 2]{howard_exponential_2018} for details.

\begin{table}[p!]
\renewcommand{\arraystretch}{1.4}
\begin{tabular}{llll}
\toprule
 & Condition on $S_t$ & $\psi$ & $V_t$ \\
\midrule
\multicolumn{4}{l}{\emph{Discrete time, $S_t = \sum_{i=1}^t X_i$, one-sided}} \\
\qquad Bernoulli II &
  $X_t \leq h, \E_{t-1} X_t^2 \leq gh$ & $\psi_B$ & $ght$ \\
\qquad Bennett & $X_t \leq c $ & $\psi_P$ & $\sum_{i=1}^t \E_{i-1} X_i^2$ \\
\qquad Bernstein
  & $\E_{t-1} (X_t)^k \leq \frac{k!}{2} c^{k-2} \E_{t-1} X_t^2$
  & $\psi_G$ & $\sum_{i=1}^t \E_{i-1} X_i^2$ \\
\qquad $^*$Heavy on left
  & $\E_{t-1} T_a(X_t) \leq 0 \text{ for all } a > 0$ & $\psi_N$
  & $\sum_{i=1}^t X_i^2$ \\
\qquad Bounded below & $X_t \geq -c $ & $\psi_E$ & $\sum_{i=1}^t X_i^2$ \\
\multicolumn{4}{l}{\emph{Discrete time, $S_t = \sum_{i=1}^t X_i$, two-sided}} \\
\qquad Parametric & $X_t \asiid F$ &
  $\log \E e^{\lambda X_1}$ &  $t$ \\
\qquad Bernoulli I & $-g  \leq X_t \leq h $ & $\psi_B$ &
  $ght $ \\
\qquad Hoeffding-KS & $-g_t  \leq X_t \leq h_t $ & $\psi_N$
  & $\sum_{i=1}^t \varphi(g_i, h_i) $ \\
\qquad Hoeffding I & $-g_t  \leq X_t \leq h_t $ & $\psi_N$
  & $\sum_{i=1}^t \pfrac{g_i+h_i}{2}^2 $ \\
\qquad $^*$Symmetric & $X_t \sim -X_t \mid \Fcal_{t-1}$ & $\psi_N$ &
  $\sum_{i=1}^t X_i^2$ \\
\qquad Self-normalized I & $\E_{t-1} X_t^2 < \infty$ & $\psi_N$ &
  $\frac{1}{3} \sum_{i=1}^t \eparen{X_i^2 + 2 \E_{i-1} X_i^2}$ \\
\qquad Self-normalized II & $\E_{t-1} X_t^2 < \infty$ & $\psi_N$ &
  $\frac{1}{2} \sum_{i=1}^t \eparen{(X_i)_+^2 + \E_{i-1} (X_i)_-^2}$ \\
\qquad Cubic self-normalized & $\E_{t-1} \abs{X_t}^3 < \infty$ &
  $\psi_G$ & $\sum_{i=1}^t \eparen{X_i^2 + \E_{i-1} \abs{X_i}^3}$ \\
\multicolumn{4}{l}{\emph{Continuous time, one-sided}} \\
\qquad Bennett & $\Delta S_t \leq c$ & $\psi_P$ & $\eangle{S}_t$ \\
\qquad Bernstein & $W_{m,t} \leq \frac{m!}{2} c^{m-2} V_t$ & $\psi_G$ &
  $V_t$ \\
\multicolumn{4}{l}{\emph{Continuous time, two-sided}} \\
\qquad L\'evy & $\E e^{\lambda S_1} < \infty$ & $\log \E e^{\lambda S_1}$ &
  $t$ \\
\qquad Continuous paths & $\Delta S_t \equiv 0$ & $\psi_N$ & $\eangle{S}_t$ \\
\bottomrule
\end{tabular}
\caption{Summary of sufficient conditions a real-valued, discrete- or
  continuous-time process $(S_t)$ to be sub-$\psi$ with the given variance
  process. We assume $(S_t)$ is a martingale in every case except
  the starred ones ($^*$), when the first moment $\E \eabs{X_t}$ need not exist.
  See \citet[Section 2]{howard_exponential_2018} for details. One-sided
  conditions yield a bound on right-tail deviations only, while two-sided
  conditions yield bounds on both tails. For continuous-time cases, $\Delta S_t$
  denotes the jumps of $(S_t)$ and $\eangle{S}_t$ denotes the predictable
  quadratic variation. For the heavy on left case, the truncation function is
  defined as $T_a(y) \defineas (y \bmin a) \bmax -a$ for $a > 0$
  \citep{bercu_exponential_2008}. The function $\varphi$ used in the
  Hoeffding-KS case is defined in \eqref{eq:varphi_defn}. The process $W_{m,t}$
  in the continuous-time Bernstein case is defined in Fact 2(c) of
  \citet{howard_exponential_2018} (cf. \citet{van_de_geer_exponential_1995}).
  \label{tab:scalar_suff_cond}}
\end{table}

\begin{table}[p!]
\renewcommand{\arraystretch}{1.4}
\begin{tabular}{llll}
\toprule
 & Condition on $Y_t=\sum_{i=1}^t X_t$ & $\psi$ & $Z_t$ \\
\midrule
\multicolumn{4}{l}{\emph{One-sided}} \\
\qquad Bernoulli II &
  $X_t \preceq h I_d, \E_{t-1} X_t^2 \preceq gh I_d$ & $\psi_B$
  & $ght I_d$ \\
\qquad Bennett & $X_t \preceq c I_d$ & $\psi_P$
  & $\sum_{i=1}^t \E_{i-1} X_i^2$ \\
\qquad Bernstein
  & $\E_{t-1} (X_t)^k \preceq \frac{k!}{2} c^{k-2} \E_{t-1} X_t^2$
  & $\psi_G$ & $\sum_{i=1}^t \E_{i-1} X_i^2$ \\
\qquad Bounded below & $X_t \succeq -c I_d$ & $\psi_E$
  & $\sum_{i=1}^t X_i^2$ \\
\multicolumn{4}{l}{\emph{Two-sided}} \\
\qquad Bernoulli I & $-g I_d \preceq X_t \preceq h I_d$ & $\psi_B$ &
  $ght I_d$ \\
\qquad Hoeffding-KS & $-G_t I_d \preceq X_t \preceq H_t I_d$ & $\psi_N$
  & $\sum_{i=1}^t \varphi(G_i, H_i) I_d$ \\
\qquad Hoeffding I & $-G_t I_d \preceq X_t \preceq H_t I_d$ &
  $\psi_N$
  & $\sum_{i=1}^t \pfrac{G_i+H_i}{2}^2 I_d$ \\
\qquad Hoeffding II & $X_t^2 \preceq A_t^2$ & $\psi_N$ &
  $\sum_{i=1}^t A_i^2$ \\
\qquad $^*$Symmetric & $X_t \sim -X_t \mid \Fcal_{t-1}$ & $\psi_N$
  & $\sum_{i=1}^t X_i^2$ \\
\qquad Self-normalized I & $\E_{t-1} X_t^2 < \infty$ & $\psi_N$
  & $\frac{1}{3} \sum_{i=1}^t \eparen{X_i^2 + 2 \E_{i-1} X_i^2}$
  \\
\qquad Self-normalized II & $\E_{t-1} X_t^2 < \infty$ & $\psi_N$
  & $\frac{1}{2}
     \sum_{i=1}^t \eparen{(X_i)_+^2 + \E_{i-1} (X_i)_-^2}$ \\
\qquad Cubic self-normalized & $\E_{t-1} \abs{X_t}^3 < \infty$ &
  $\psi_G$ & $\sum_{i=1}^t \eparen{X_i^2 + \E_{i-1} \abs{X_i}^3}$
  \\
\bottomrule
\end{tabular}
\caption{Summary of sufficient conditions for \cref{th:canonical_assumption}
  when $Y_t = \sum_{i=1}^t X_i$ with $X_t \in \Hcal^d$, the space of
  Hermitian, $d \times d$ matrices, taking $S_t = \gamma_{\max}(Y_t)$ and
  $V_t=\gamma_{\max}(Z_t)$. We assume
  $\E X_t = 0$ and hence $(Y_t)$ is a martingale in every case except the
  symmetric$^*$ case, when the first moment $\E \abs{X_t}$ need not exist. See
  \citet[Section 2]{howard_exponential_2018} for details. One-sided conditions
  yield a bound on right-tail deviations only, while two-sided conditions yields
  bounds on both tails. The function $\varphi$ used in the Hoeffding-KS case is
  defined in \eqref{eq:varphi_defn}.
  \label{tab:matrix_suff_cond}}
\end{table}

\end{document}